\newtheorem{theorem}{Theorem}[section]
\newtheorem{lemma}[theorem]{Lemma}
\newtheorem{proposition}[theorem]{Proposition}
\newtheorem{corollary}[theorem]{Corollary}
\newenvironment{customthm}[1]
  {\innercustomthm}
  {\endinnercustomthm}
\newtheorem*{theorem*}{Theorem}
\theoremstyle{definition}
\newtheorem{example}[theorem]{Example}
\newtheorem{remark}[theorem]{Remark}
\newtheorem*{assumption*}{Assumption}
\newtheorem{definition}[theorem]{Definition}  
\crefname{example}{example}{examples}
\Crefname{example}{Example}{Examples}
\numberwithin{equation}{section}
\numberwithin{table}{section}
\numberwithin{figure}{section}
\newcommand{\C}{\mathbb{C}}
\newcommand{\N}{\mathbb{N}}
\newcommand{\Q}{\mathbb{Q}}
\newcommand{\Z}{\mathbb{Z}}
\DeclareMathAlphabet{\pazocal}{OMS}{zplm}{m}{n}
\newcommand{\pB}{\pazocal{B}}
\newcommand{\pC}{\pazocal{C}}
\newcommand{\pE}{\pazocal{E}}
\newcommand{\pF}{\pazocal{F}}
\newcommand{\pG}{\pazocal{G}}
\newcommand{\pH}{\pazocal{H}}
\newcommand{\pI}{\pazocal{I}}
\newcommand{\pS}{\pazocal{S}}
\newcommand{\pZ}{\pazocal{Z}}
\newcommand{\pNB}{\pazocal{NB}}
\newcommand{\pV}{\pazocal{V}}
\newcommand{\fa}{\mathbf a}
\newcommand{\fb}{\mathbf b}
\newcommand{\fc}{\mathbf c}
\newcommand{\fd}{\mathbf d}
\newcommand{\fk}{\mathbf k}
\newcommand{\fs}{\mathbf s}
\newcommand{\ft}{\mathbf t}
\newcommand{\fv}{\mathbf v}
\newcommand{\fw}{\mathbf w}
\newcommand{\fx}{\mathbf x}
\newcommand{\fy}{\mathbf y}
\newcommand{\fz}{\mathbf z}
\newcommand{\fA}{\mathfrak{A}}
\newcommand{\fB}{\mathfrak{B}}
\newcommand{\fS}{\mathfrak{S}}
\newcommand{\Set}{\mathsf{Set}}
\newcommand{\MatIso}{\mathsf{Mat}}
\newcommand{\ps}[2]{(#1,#2)}
\newcommand{\Gbip}{G_{\text{bip}}}
\newcommand{\Gline}{G_{\text{line}}}
\newcommand{\Grel}{G}
\newcommand{\pred}[4]{V(#3, #4 \, | \, #1, #2)}
\newcommand{\ssets}[2]{#1(#2)}
\newcommand{\pssets}[2]{\ssets{#1_{\bullet}}{#2}}
\newcommand{\sSets}[1]{\ssets{\pS}{#1}}
\newcommand{\psSets}[1]{\pssets{\pS}{#1}}
\newcommand{\Aut}{\operatorname{Aut}}
\newcommand{\NB}{\pazocal{N}\!\pazocal{B}}
\newcommand{\cl}{\operatorname{cl}}
\newcommand{\rk}{\operatorname{rk}}
\newcommand{\rel}{\operatorname{rel}}
\newcommand{\suchthat}{\, : \,}
\newcommand{\Sym}{\mathrm{Perm}}
\newcommand{\QMat}[3]{\mathfrak{G}_{\bullet}(#1,#2,#3)}
\newcommand{\GS}[3]{\mathfrak{H}(#1,#2,#3)}
\newcommand{\QAlg}[2]{\mathfrak{G}_{\bullet}(#1,#2)}
\newcommand{\QAut}[2]{\mathfrak{Aut}_{#1}(#2)}
\newcommand{\QAutp}[2]{\mathfrak{Aut}_{\bullet}(#1,#2)}
\renewcommand{\setminus}{\mathbin{\fgebackslash}}
\DeclareMathOperator{\com}{com}
\DeclareRobustCommand{\cev}[1]{%
  \mathpalette\do@cev{#1}%
}
\newcommand{\do@cev}[2]{%
  \fix@cev{#1}{+}%
  \reflectbox{$\m@th#1\vec{\reflectbox{$\fix@cev{#1}{-}\m@th#1#2\fix@cev{#1}{+}$}}$}%
  \fix@cev{#1}{-}%
}
\newcommand{\fix@cev}[2]{%
  \ifx#1\displaystyle
    \mkern#23mu
  \else
    \ifx#1\textstyle
      \mkern#23mu
    \else
      \ifx#1\scriptstyle
        \mkern#22mu
      \else
        \mkern#22mu
      \fi
    \fi
  \fi
}
\newcommand{\cZ}{\mathcal{Z}}
\newcommand{\cH}{\mathcal{H}}
\begin{document} 

\title{Matroid isomorphism games}

\author[D. Corey]{Daniel Corey}
\author[S. Schmidt]{Simon Schmidt}
\author[M. Wack]{Marcel Wack}

\address{Daniel Corey, Embry--Riddle Aeronautical University, Department of Mathematics. Daytona Beach, Florida.}
\email{\href{mailto:dcorey2814@gmail.com}{daniel.corey@unlv.edu}}

\address{Simon Schmidt, Ruhr University Bochum, Faculty for Computer Science, Universit\"ats-Strasse 150, 44801 Bochum}
\email{\href{mailto:s.schmidt@rub.de}{s.schmidt@rub.de}}

\address{Marcel Wack, Technische Universit\"at Berlin, Institut f\"ur Mathematik, Sekr. MA 6-2, Strasse des 17 Juni 136, 10623 Berlin}
\email{\href{mailto:wack@math.tu-berlin.de}{wack@math.tu-berlin.de}}

\date{\today}
\subjclass{
Primary: 
05B35, 
Secondary: 
05E16, 
16T30, 
20B25
}
\keywords{
Matroids,
Non-local games,
Quantum strategies}

\begin{abstract}
    We define and study a collection of matroid isomorphism games corresponding to various axiomatic characterizations of matroids. These are nonlocal games played between two cooperative players. Each game is played on two matroids, and the matroids are isomorphic if and only if the game has a perfect classical winning strategy. 
    We define notions of quantum isomorphism in terms of perfect quantum commuting strategies, and we find a pair of nonisomorphic matroids that are quantum isomorphic. We also give a purely algebraic characterization of quantum isomorphic matroids. Finally, we use this notion of quantum isomorphism to describe a new type of quantum automorphism group of a matroid and derive a sufficient condition for a matroid to have nonclassical quantum automorphism. 
\end{abstract}

\maketitle

\section{Introduction}
Matroids were defined by Whitney in the 1930s \cite{Whitney:1935} and they are combinatorial structures that record notions of dependence such as linear dependence in linear algebra and cycles in graphs. Since their founding, matroids have been discovered to underlie many important constructions in areas such as algebraic geometry, tropical geometry, and combinatorial optimization. In this paper, we study isomorphisms of matroids in the context of nonlocal games and quantum information theory. 

A well-known fact about matroids is that they admit numerous cryptomorphic definitions, and each of them may be used to define matroid isomorphisms. 
Every matroid $M$ starts with a finite ground set which we denote by $\ssets E M$. A common way to define a matroid is to specify a collection of subsets of $\ssets E M$ that satisfy certain axioms. Common axiomatic frameworks include independent sets $\ssets \pI M$, bases $\ssets \pB M$, circuits $\ssets \pC M$, flats $\ssets \pF M$ and hyperplanes $\ssets \pH M$. (We review all of these notions and other matroid terminology in \S\ref{sec:matroid-basics}.) 
Fix one of the axiomatic systems above and denote it by $\pS$. An isomorphism $\varphi \colon M\to N$ of matroids $M$ and $N$ consists of a bijective map $\varphi \colon \ssets E M \to \ssets E N$ such that $A\in \ssets \pS M$ if and only if $\varphi(A) \in \ssets \pS N$. A simple but important observation is that it does not matter which axiomatic framework $\pS$ is chosen in this definition, they all yield equivalent definitions of a matroid isomorphism. The choices for $\pS$ above are all instances of what we define in \S\ref{sec:matroid-iso-str} to be matroid isomorphism structures, and these include other attributes of matroids that are not as common, such as the nonbases $\NB(M)$ of a matroid. 

In quantum information theory, nonlocal games (also known as Bell inequalities in the physics literature) provide a mathematical way to study quantum entanglement. A two-party nonlocal game includes a referee and two players, Alice and Bob, who may agree upon a strategy before the game starts, but they are not allowed to communicate during the game. Alice and Bob can for example use a deterministic classical strategy, in which the outputs of the players are determined by the inputs of the referee. 
Our first discovery is that matroid isomorphism admits an interpretation in terms of perfect winning strategies of a nonlocal game. In fact, we define a collection of games, one for each matroid isomorphism structure. 
We describe a simplified version of our games here, the general construction is provided in \S\ref{sec:non-local-games}. 

Let $M$ and $N$ be two matroids, and fix an isomorphism structure $\pS$.  Denote by $\psSets M$ the set of pairs $\fa = (S_{\fa}, p_{\fa})$ where $S_{\fa} \in \ssets \pS M$ and $p_{\fa} \in S_{\fa}$. The matroid isomorphism game involves an exchange of pairs of elements of $\psSets M$. Scoring in this game requires a relation between elements of $\psSets M$.  We define $\rel_{\ssets \pS M} \colon \pssets \pS M \times \pssets \pS M \to \{0,1,2,3\}$ by
 \begin{equation*}
    \rel_{\ssets \pS M}(\fa , \fb) = 
    \begin{cases}
      0 & \text{if } S_{\fa} = S_{\fb} \text{ and } p_{\fa} = p_{\fb}, \\
      1 & \text{if } S_{\fa} \neq S_{\fb} \text{ and } p_{\fa} = p_{\fb} \\
      2 & \text{if } S_{\fa} = S_{\fb} \text{ and } p_{\fa} \neq p_{\fb}, \\
      3 & \text{if } S_{\fa} \neq S_{\fb} \text{ and } p_{\fa} \neq p_{\fb} ,
    \end{cases}
  \end{equation*}
and we define $\rel_{\pS(N)}$ in a similar way. 
The referee sends $\fa$ and $\fb$  in $\psSets M$ to Alice and Bob, respectively. Alice and Bob each send elements of $\psSets N$ back to the referee, say Alice returns $\fx$ and Bob returns $\fy$. They win if 
\begin{equation*}
    \rel_{\ssets \pS M}(\fa , \fb)  = \rel_{\ssets \pS N}(\fx, \fy).
\end{equation*}
We note that there is a way to understand this game in terms of the colored graph isomorphism game as defined in \cite{robersonschmidt}, see Remark \ref{rmk:colored+graph+iso}.

In order for us to be able to use this game to determine whether or not $M$ and $N$ are isomorphic, the matroid isomorphism structure $\pS$ needs to \textit{cover} the matroids $M$ and $N$ in the sense that, e.g., for each $p\in \ssets E M$ there is a $A\in \pS(M)$ such that $p\in A$. For any fixed matroid isomorphism structure of interest, this technical condition is satisfied by the vast majority of matroids, see \Cref{prop:matroid+cover}. 

\begin{customthm}{A}[cf. \Cref{thm:iso-iff-classical}]\label{thm:A}
    Let $M$ and $N$ be matroids and let $\pS$ be a matroid isomorphism structure that covers $M$ and $N$. 
    Then $M$ and $N$ are isomorphic if and only if the matroid isomorphism game has a perfect classical winning strategy. 
\end{customthm}

With this characterization of matroid isomorphisms in hand, we are able to relax the notion of isomorphism by allowing Alice and Bob to use \textit{quantum} strategies in order to win the game.  Such strategies allow for Alice and Bob to perform (local) measurements on a shared entangled state in order to improve their chances of winning. There are several different notions of quantum strategies, we focus on quantum commuting strategies which we review in \S\ref{sec:non-local-games}, and we say that two matroids are $\pS$--quantum isomorphic if the matroid isomorphism game (played using the isomorphism structure $\pS$) has a perfect quantum commuting strategy. In contrast to ordinary matroid isomorphisms, we get a different notion of quantum isomorphism depending on the matroid isomorphism structure used in the game. We find that there are matroids that are quantum isomorphic, but not isomorphic.

\begin{customthm}{B}[cf. \Cref{thm:noniso-quantum-ism}]\label{thm:B}
    There exist a pair of matroids that are quantum isomorphic but not isomorphic. 
\end{customthm}

These two matroids have rank $3$ and are defined on sets with 18 elements. They are derived from the Mermin--Peres magic square game \cite{Mermin}, which is a game that famously illustrates how quantum entanglement may produce perfect winning strategies when perfect classical strategies are not available. We review this game in \S\ref{sec:quantum-iso-matroids} in the broader context of nonlocal games on linear binary constraint systems as formulated by Cleve and Mittal in \cite{CleveMittal}.  
We describe a procedure for how to obtain quantum isomorphic matroids from perfect quantum strategies of some of these games. 

Starting in \S\ref{sec:algebra}, we describe purely algebraic characterizations of quantum isomorphisms of matroids. We define a $\ast$-algebra $\QMat M N \pS$ associated to matroids $M$, $N$ and matroid isomorphism structure $\pS$ that parameterizes the quantum isomorphisms between $M$ and $N$. 
More precisely, we prove in \Cref{thm:algebraic-quantum-iso} that the nonvanishing of such a $\QMat M N \pS$ ensures that the two matroids are quantum isomorphic. 
Using this algebraic description of quantum isomorphism, we show that some basic numerical data of a matroid must be preserved under quantum isomorphism. 

\begin{customthm}{C}[cf. \Cref{thm:gs+same+size}]\label{thm:C}
    Suppose $\pS$ is an isomorphism structure that covers the matroids $M$ and $N$.  If $M$ and $N$ are $\pS$-quantum isomorphic, then their ground sets have the same size. 
\end{customthm}  

Now, consider the $\ast$-algebra $\QMat M N \pS$ in the case where $N$ is equal to $M$. The resulting algebra admits a natural coproduct, and in particular, we obtain a new quantum automorphism group $\QAutp{M}{\pS}$ of $M$. 
This is a quantization of the automorphism group $\Aut(M)$ of $M$, which becomes more apparent when we view this group as a subgroup of permutations on $\pS_{\bullet}(M)$ induced by automorphisms of $M$.
Similar to quantum automorphism groups of graphs and other discrete structures, see for example \cite{Foldedcube}, we prove that $\QAutp{M}{\pS}$ admits a \textit{disjoint automorphism criterion} for detecting quantum symmetries. This uses the description of the underlying $\ast$-algebra of $\QAutp{M}{\pS}$ in terms of the quantum automorphism group of the relation colored graph described in \Cref{def:relation+colored+graph}. 

Quantum automorphism groups of matroids were first defined and studied in \cite{CoreyJoswigSchanzWackWeber}. The authors document which matroids with small ground set have noncommutative quantum automorphism groups (as they define).  Using the disjoint automorphism criterion, we are able to show that the $\QAutp{M}{\pS}$ provide new quantizations of $\Aut(M)$. 

\begin{customthm}{D}\label{thm:D}
    There are matroids $M$ such that the underlying algebra of $\QAutp M \pS$ is noncommutative but its quantum automorphism groups in \cite{CoreyJoswigSchanzWackWeber} are commutative. 
\end{customthm}

\section{Matroids and isomorphism structures}
\label{sec:matroids}
In this section, we review the various characterizations of matroids and what it means for two matroids to be isomorphic.  
We refer the reader to \cite{Oxley} for a comprehensive account of matroid theory.  

\subsection{Matroid basics}
\label{sec:matroid-basics}
There are many equivalent ways of prescribing a matroid. Our starting point is the characterization in terms of bases. Let $r$ be a nonnegative integer. 
A \textit{matroid} is a pair $M = (\ssets E M, \ssets \pB M)$ consisting of a finite set $\ssets E M$  and a nonempty set $\ssets \pB M$ of $r$-element subsets of $\ssets E M$ that satisfies the \textit{basis exchange axiom}: for each pair of distinct elements $A$ and $B$ in $\ssets \pB M$ and $a\in A\setminus B$, there is a $b\in B\setminus A$ such that $(A\setminus \{a\}) \cup \{b\}$ is in $\ssets \pB M$.  The set $\ssets E M$ is called the \textit{ground set} of $M$. An element of $\ssets \pB M$ is called a \textit{basis} $M$.  The \textit{nonbases} of $M$, which we denote by $\ssets \NB M$, are the $r$-element subsets of $E(M)$ which are not bases.

Let $M$ be a matroid. We describe several other characterizations of matroids. 
\begin{itemize}
    \item A subset $A\subseteq \ssets E M$ is \textit{independent} if $A$ is contained in a basis of $M$. The set of independent sets of $M$ is denoted by $\pI(M)$. 
    
    \item  A \textit{circuit} is a minimal dependent set of $M$. The set of circuits of $M$ is denoted by $\ssets{\pC}{M}$. 
    
    \item The \textit{rank function} of $M$ is the function $\rk_{M} \colon 2^{\ssets{E}{M}} \to \N_{0}$ defined by
    \begin{equation*}
        \rk_{M}(A) = \max(|X| \suchthat X\in \pI(M) \text{ and } X\subseteq A).
    \end{equation*}
    The rank of $M$ is $\rk_{M}(\ssets{E}{M})$ and is denoted by $\rk(M)$. This is the size of each basis of $M$. 

    \item The \textit{closure function} of $M$ is the function $\cl_{M} \colon 2^{\ssets{E}{M}} \to 2^{\ssets{E}{M}}$ defined by
    \begin{equation*}
        \cl_{M}(A) = \{b\in \ssets{E}{M} \suchthat \rk_{M}(A\cup \{b\}) = \rk_{M}(A)\}.
    \end{equation*}

    \item A subset $A \subset \ssets{E}{M}$ is a \textit{flat} if $\cl_{M}(A) = A$. The set of flats of $M$ is denoted by $\pF(M)$.

    \item A \textit{hyperplane} of $M$ is a flat whose rank is $r-1$ where $r$ is the rank of $M$.  The set of hyperplanes is denoted by $\cH(M)$. 
\end{itemize}

\noindent Each attribute in the above list admits its own axiomatic characterization that one may use to define a matroid.

\begin{example}
Let $E$ be a finite set and let $r$ be an integer satisfying $0\leq r \leq |E|$.  The \textit{uniform matroid} of rank $r$ on $E$ is the matroid $U(r,E)$ with where every $r$-element subset of $E$ is a basis. If $E = \{1,\ldots,n\}$, then we simply write $U(r,n)$.
\end{example}

Matroids may be obtained from vector configurations in the following way. Let $K$ be a field and let $V$ be an $r$-dimensional $K$-vector space. Let $Q = \{v_1, \ldots, v_n\}$ be a finite set of \textit{labeled} vectors in $V$ that linearly span $V$. Vectors may coincide but they are distinguished by their labels. The matroid of $Q$ is denoted by $M(Q)$ and is defined in the following way. The ground set of $M(Q)$ is $\{1,\ldots,n\}$ (the set of labels), and a subset $A\subseteq \{1,\ldots, n\}$ is a basis for this matroid if $\{v_i \suchthat i\in A\}$ is a basis of $V$.  A matroid of this form is said to be $K$-\textit{realizable}. 

Matroids may also be obtained from graphs. Let $G$ be a graph with vertex set $V$ and edge set $E$. The \textit{cycle matroid} of $G$ is the matroid whose ground set is the set of edges $E$, and a subset $A\subset E$ is a basis of this matroid if $A$ is the set of edges in a spanning forest of $G$. 

\begin{example} 
  Let $E = \{ 1, \ldots, 9\}$ be points placed in a $3 \times 3$ grid (cf. \Cref{fig:magic-square-matroid}) and take as nonbases the ground set elements that are horizontally or vertically aligned in this grid. Explicitly, 
\begin{equation*}
   \NB(M) := \{ 123,456,789,147,258,369\},
 \end{equation*}
 where we abbreviate, e.g., $\{1, 2, 3\}$ by $123$. 
This matroid has rank $3$ and is realizable over $\Q$ since $M=M(Q)$ where $Q$ is a vector configuration consisting of the columns of the matrix
  \label{ex:magic-square-matroid}
  \begin{equation*}
    \begin{pmatrix}
      1 & 0 & 1 & 0 & 2 & 2 & 1 & 1 & 1 \\
      0 & 1 & 1 & 0 & 5 & 5 & 0 & 3 & 2 \\
      0 & 0 & 0 & 1 & 2 & 6 & 4 & 1 & 2
    \end{pmatrix}.
  \end{equation*}
  This matroid is featured prominently in  \S\ref{sec:quantum-iso-matroids}.
\end{example}

\begin{figure}
    \centering
    \includegraphics[width=0.2\linewidth]{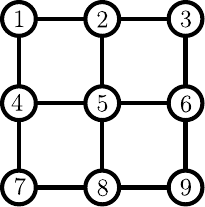}
    \caption{A rank 3 matroid}
    \label{fig:magic-square-matroid}
\end{figure}

\subsection{Matroid isomorphism structures}
\label{sec:matroid-iso-str}
Two matroids $M$ and $N$ are \textit{isomorphic} if there is a bijection $\varphi\colon \ssets{E}{M} \to E(N)$ such that $A\in \ssets{\pB}{M}$ if and only if $\varphi(A)\in \ssets{\pB}{N}$; such a $\varphi$ is an \textit{isomorphism} of $M$ and $N$. An isomorphism may be characterized in analogous ways for other axiomatic structures of matroids; we formalize this in the following way. 

Denote by $\MatIso$ the class of all matroids and $\Set$ the class of all sets. A \textit{matroid isomorphism structure} on $\MatIso$ is a function $\pS \colon \MatIso \to \Set$ satisfying
\begin{enumerate}
    \item for each matroid $M$ we have $\ssets \pS M \subseteq 2^{\ssets E M}$, and
    \item for each pair of matroids $M$ and $N$,  a function $\varphi \colon \ssets E M \to \ssets E N$ defines an isomorphism $M \to N$ if and only if:
    \begin{equation*}
        A\in \ssets \pS M \quad \text{if and only if} \quad \varphi(A) \in \ssets \pS N.
    \end{equation*}
\end{enumerate}
Examples of matroid isomorphism structures include bases $\ssets \pB M$, nonbases $\ssets \NB M$, independent sets $\ssets \pI M$, circuits $\ssets \pC M$, flats $\ssets \pF M$, and hyperplanes $\ssets \pH M$. 

An \textit{automorphism} of $M$ is an isomorphism of $M$ to itself and the set of all automorphisms of $M$ forms the \textit{automorphism group} of $\Aut(M)$. 
 
In the following section we define a matroid isomorphism game for each matroid isomorphism structure. In order for the existence of perfect deterministic winning strategies to characterize matroid isomorphism, we need the matroid isomorphism structure $\pS$ to satisfy the following property. 
We say that $\pS$ \textit{covers} the matroid $M$ if for each $p\in \ssets E M$ there is an element $A\in \pS(M)$ such that $p\in A$. As we see in \Cref{prop:matroid+cover}, our frequently used isomorphism structures cover most matroids of interest. 

In order to formulate this proposition, we must review some basic attributes of and operations on matroids. 
Let $M$ be a matroid. A \textit{loop} of  $M$ is an element of $\ssets{E}{M}$ not contained in any basis of $M$. A \textit{coloop} is an element of $\ssets{E}{M}$ contained in every basis of $M$, equivalently, an element not contained in any circuit of $M$. 

The dual of $M$ is the matroid $M^{\vee}$ that has the same ground set as $M$ and 
\begin{equation*}
    \pB(M^{\vee}) = \{A\subseteq \ssets E M \suchthat \ssets E M \setminus A \in \ssets \pB M\}.
\end{equation*}
Given $A\subset \ssets E M$, the \textit{restriction} $M|A$ is the matroid on $X$ whose independent sets are the independent sets of $M$ contained in $A$. The \textit{deletion} $M\setminus A$ is the restriction of $M$ to $\ssets E M \setminus A$. The contraction $M/A$ of $M$ by $A$ is the dual of  $(M^{\vee})|A$.

Given $e\notin \ssets{E}{M}$, the \textit{free extension} of $M$ by $e$ is the matroid $M + e$ whose ground set is $\ssets{E}{M + e} = \ssets{E}{M} \sqcup \{e\}$, and 
\begin{equation}
\label{eq:bases-free-ext}
    \ssets{\pB}{M+e} = \ssets{\pB}{M} \cup \left\{A\cup \{e\} \suchthat A\in \ssets{\pI}{M},\ |A| = \rk(M)-1 \right\}. 
\end{equation}

The matroid $M$ is \textit{paving} if each circuit of $M$ has size at least $\rk(M)$. 

Given two matroids $M$ and $N$, their \textit{direct sum} is the matroid $M\oplus N$ with
\begin{equation*}
    \ssets{E}{M \oplus N} = \ssets E M \sqcup \ssets E N, \quad \ssets{\pB}{M \oplus N} = \{A\cup B \suchthat A\in \ssets \pB M, B\in \ssets \pB N\}.
\end{equation*}
A matroid is \textit{connected} if it is not the direct sum of two matroids. 

\begin{proposition}
\label{prop:matroid+cover}
    Let $M$ be a matroid of rank $r$ whose ground set has $n$ elements. 
    \begin{enumerate}
        \item The set of bases $\ssets \pB M$ covers $M$ if and only if  $M$ has no loops. 
        \item The set of circuits $\pC(M)$ covers $M$ if and only if $M$ has no coloops. 
        \item The set of nonbases $\NB(M)$ covers $M$ if and only if $M$ is not a free extension of a paving matroid or $M$ does not split as a direct sum of the form $U(1,1) \oplus U(r-1,n-1)$.  
        \item The set of flats $\pF(M)$ and the set of hyperplanes $\pH(M)$ each cover the matroid $M$ unconditionally. 
    \end{enumerate}
\end{proposition}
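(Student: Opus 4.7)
The plan is to fix an element $p \in \ssets{E}{M}$ and, for each of the four structures, determine exactly when $p$ fails to be contained in any member of that structure. Parts (1) and (2) are immediate: a loop is by definition an element contained in no basis, so $\ssets{\pB}{M}$ covers $M$ iff $M$ is loopless, and the standard equivalence that an element is a coloop iff it lies in no circuit yields (2). For part (4), I note that $\ssets{E}{M}$ itself is always a flat, so $\pF(M)$ covers $M$ unconditionally; for hyperplanes, if $p$ is a loop then $p \in \cl_M(\emptyset) \subseteq F$ for every flat $F$, while if $p$ is not a loop then $\{p\}$ may be extended via the independence augmentation property to an independent set $I$ of size $\rk(M)-1$, so that $\cl_M(I)$ is a hyperplane containing $p$.

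Part (3) is the substantive case. The key reformulation is that $\NB(M)$ fails to cover $M$ iff there exists $p \in \ssets{E}{M}$ such that every $(r-1)$-subset $T \subseteq \ssets{E}{M} \setminus \{p\}$ satisfies $T \cup \{p\} \in \ssets{\pB}{M}$. I would then split on whether $p$ is a coloop. If $p$ is a coloop, every basis of $M$ contains $p$, so the hypothesis translates into every $(r-1)$-subset of $\ssets{E}{M} \setminus \{p\}$ being a basis of $M \setminus p$, whence $M \setminus p = U(r-1, n-1)$; since $\{p\}$ is its own connected component (a coloop lies in no circuit), $M$ decomposes as $U(1,1) \oplus U(r-1, n-1)$. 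If $p$ is not a coloop, then $N := M \setminus p$ has rank $r$, and comparing $\ssets{\pB}{M}$ with the formula \eqref{eq:bases-free-ext} for $\ssets{\pB}{N+p}$ shows they agree precisely when every $(r-1)$-subset of $\ssets{E}{N}$ is independent in $N$, i.e., $N$ has no circuit of size less than $r = \rk(N)$, which is exactly $N$ being paving; hence $M = N + p$ is the free extension of a paving matroid. The converses in both extremal cases are direct verifications from the defining formula \eqref{eq:bases-free-ext} and the basis description of the direct sum.

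The main obstacle I anticipate is the rank bookkeeping in part (3) --- ensuring that $N = M \setminus p$ retains full rank $r$ when $p$ is not a coloop so that the free extension formula applies with the correct indices --- together with the need to justify that these two exceptional families are genuinely distinct and cannot be subsumed under a single criterion.
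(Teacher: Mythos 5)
Your proposal is correct and takes essentially the same route as the paper: parts (1), (2), and (4) are dispatched as immediate (the paper simply declares them clear, so your hyperplane argument is in fact more detailed than what is printed), and for (3) your case split on whether $p$ is a coloop is exactly the paper's split on whether $\rk(M\setminus p)$ equals $r-1$ or $r$, with the same comparison against the free-extension basis formula \eqref{eq:bases-free-ext} and the same identification of the exceptional direct-sum case. The rank bookkeeping you flag as the main obstacle is precisely the point the paper's converse direction also turns on, and you handle it the same way.
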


\begin{proof}
    Statements (1), (2) and (4) are clear. Consider (3).  Suppose $M = N + e$ where $N$ is a paving matroid of rank $r$. This means that every $(r-1)$--element subset of $\ssets E N$ is an independent set of $N$. So every $r$-element subset of $\ssets E M$ containing $e$ is a basis by the characterization of free extension provided in \Cref{eq:bases-free-ext}.
    If $M \cong U(1,\{e\}) \oplus U(r-1, n-1)$, then every $r$-element subset containing $e$ is a basis of $M$. In either case, $\ssets \NB M$ does not cover $M$.
    
    Conversely, suppose $e\in \ssets E M$ is not contained in a nonbasis of $M$. In particular, every $(r-1)$--element subset of $\ssets E M \setminus \{e\}$ is independent. Let $N = M \setminus e$.
    If $\rk(N) = r-1$, then $e$ is a coloop of $M$ and every $(r-1)$--subset of $\ssets E N$ is a basis. Therefore $M \cong U(1,\{e\}) \oplus U(r-1,n-1)$. Now suppose that $\rk(N) = r$. Since every $(r-1)$--element subset of $\ssets E N$ is independent, we see that $N$ is a paving matroid. The identification of $M$ with $N + e$ may be seen using the basis characterization of free extension in  Equation \eqref{eq:bases-free-ext}.
\end{proof}

\section{Matroid isomorphism games and their strategies}
We begin this section with a brief review of nonlocal games and their properties. We also describe deterministic and quantum commuting strategies. Then we define the $(M,N,\pS)$--matroid isomorphism game for each pair of matroids $(M,N)$ and isomorphism structure $\pS$ that covers $M$ and $N$. We then show that $M$ and $N$ are isomorphic if and only if this game has a perfect deterministic strategy, and we give a criterion for perfect quantum commuting strategies to exist. The latter is useful in \S\ref{sec:algebra} where we describe a purely algebraic criterion for the existence of perfect quantum commuting strategies. 
\subsection{Reminders on non-local games}
\label{sec:non-local-games}
A standard reference for nonlocal games is \cite{CleveHoyerTonerWatrous}.
Our non-local games are played between two cooperating players Alice and Bob together with a referee. Alice and Bob each have their own input sets $X_{A}$ and $X_{B}$, respectively, and their own output sets $Y_{A}$ and $Y_{B}$, respectively. We assume that all of these sets are finite. Let $V \colon X_{A}\times X_{B} \times Y_{A} \times Y_B \to \{0,1\}$ be a predicate.
We write the value of $V$ on the tuple $(a,b,x,y)\in  X_{A}\times X_{B} \times Y_{A} \times Y_B$ by $\pred{a}{b}{x}{y}$.  
The rules of the non-local game $\pG = \pG(X_{A}, X_{B}, Y_{A}, Y_{B}, V)$ are as follows. 
The referee chooses at random (for us, the distribution will always be uniform) a pair of questions $(a,b)$ in $X_{A}\times X_{B}$. Alice responds with an answer $x \in Y_{A}$ and Bob responds with an answer $y \in Y_{B}$.  Alice and Bob win if $\pred{a}{b}{x}{y} = 1$; otherwise they lose. While Alice and Bob are not allowed to communicate during the game, they may strategize beforehand. 

The non-local game $\pG$ is \textit{synchronous} if $X_{A} = X_{B} = X$, $Y_{A} = Y_{B} = Y$, and $\pred{a}{a}{x}{y} = 0$ whenever $x\neq y$.  As defined in \cite{PaulsenRahaman}, $\pG$ is  \textit{bisynchronous} if it is synchronous and in addition $\pred{a}{b}{x}{x} = 0$ whenever $a\neq b$. 

A \textit{deterministic strategy} for $\pG$ consists of a pair of functions $f \colon X_{A} \to Y_{A}$ and $g \colon X_{B} \to Y_{B}$ such that $x = f(a)$ and $y = g(b)$. That is, Alice and Bob's responses are functions of their inputs. Such a strategy is \textit{perfect} if it guarantees victory, i.e., $\pred{a}{b}{f(a)}{g(b)} = 1$ for all $a\in X_{A}$ and $b \in X_{B}$.  If the game $\pG$ is synchronous, then $f = g$ holds in a perfect deterministic strategy. 

More generally, a classical strategy allows for Alice and Bob to exploit shared randomness in order to increase their chances of winning. When the input sets are finite (which is always the case in this paper), the existence of a perfect classical strategy ensures the existence of a perfect deterministic strategy. 

There are several different formulations of quantum strategies, and our focus is on the quantum commuting variation, see \cite[\S~2]{AtseriasEtAl} or \cite{PaulsenSeveriniStahlkeTodorovWinter}. 
We formulate this in the language of positive operator-valued measurements. 
Let $H$ be a complex Hilbert space. A \textit{positive operator-valued measurement}, POVM for short, consists of a \textit{finite} family $\pF = (F_{x})_{x\in X}$ of Hermitian positive semidefinite linear operators $F_x$ on $H$ such that 
\begin{equation*}
    \sum_{x\in X} F_x = I_{H}
\end{equation*}
where $I_{H}$ denotes the identity on $H$. They are called \textit{projective measurements}, or PVM's, if all operators $F_x$ in the POVM are projective, i.e., $F_x=F_x^*=F_x^2$.

A \textit{quantum commuting strategy} for the non-local game $\pG$ may consists of the following:
\begin{itemize}
    \item a complex Hilbert space $H$;
    \item a unit vector $\psi\in H$;
    \item families of POVMs $(\pE_{x})_{x\in Y_{A}}$ and $(\pF_{y})_{y\in Y_{B}}$  on $H$ for Alice and Bob, respectively;
    \item write $\pE_{x} = (E_{ax})_{a\in X_{A}}$ and $\pF_{y} = (F_{by})_{b\in X_{B}}$.  We require that each pair of operators $E_{ax}$ and $F_{by}$ commute.
\end{itemize}  
The probability of Alice and Bob answering with $x$ and $y$ on the input $a$ and $b$ is
\begin{equation*}
    p(x, y | a, b) = 
    \psi^{*}(E_{ax} F_{by}) \psi.
\end{equation*}

\subsection{The game}
Fix matroids $M$ and $N$ and an isomorphism structure $\pS$ that covers $M$ and $N$.
Given a matroid isomorphism structure $\pS$, define
\begin{equation*}
    \pS_{\bullet}(M) = \{(S, p) \suchthat S \in \ssets \pS M, \ p\in A \}.
\end{equation*}
Throughout, pointed sets will be denoted by lowercase bold letters. 
Given a pointed set $\fa = (S, p)$, we write $S_{\fa}$ for $S$ and $p_{\fa}$ for $p$.

To define an isomorphism game, we need a way to score the players' answers. To that end, define the map $\rel_{\pS(M)} : \pS_{\bullet}(M) \times \pS_{\bullet}(M)  \to \{0,1,2,3\}$ by
  \begin{equation*}
    \rel_{\ssets \pS M}(\fa , \fb) = \begin{cases}
      0 & \text{if } S_{\fa} = S_{\fb} \text{ and } p_{\fa} = p_{\fb} \\
      1 & \text{if } S_{\fa} \neq S_{\fb} \text{ and } p_{\fa} = p_{\fb} \\
      2 & \text{if } S_{\fa} = S_{\fb} \text{ and } p_{\fa} \neq p_{\fb} \\
      3 & \text{if }  S_{\fa} \neq S_{\fb} \text{ and } p_{\fa} \neq p_{\fb}
    \end{cases}
  \end{equation*}
To illustrate this relation, we associate to the pair $(M,\pS)$ an edge colored graph.
  
\begin{definition}
\label{def:relation+colored+graph}
The \textit{relation colored graph} of the pair $(M,\pS)$ is the graph $\Grel(M,\pS)$ whose vertex set is $\psSets M$, and edge set
\begin{equation*}
    E(\Grel(M,\pS)) = \left\{  \{\fa,\fb\} \suchthat \rel_{\pS(M)}(\fa,\fb) \in \{1, 2\} \right\}.
\end{equation*}
The color of the edge $\{\fa,\fb\}$ is $\rel_{\ssets \pS M}(\fa , \fb)$.
\end{definition}

\begin{remark}
There are other natural graphs associated to a matroid $M$ and matroid isomorphism structure $\pS$.  Consider the bipartite graph $\Gbip(M,\pS)$ with vertices $V = \ssets E M \cup \sSets M $ and edges 
    \[
      E = \{ \{a, A\} \suchthat A \in \sSets M, a \in A\}.
    \]
Then, the line graph $\Gline(M,\pS)$ of $\Gbip(M,\pS)$, i.e., the graph with vertices $E(\Gbip(M,\pS))$ and edges
    \[
      E = \{(\{A,a\},\{B,b\}) \suchthat A = B \text{ or } a=b,\ \text{ but not both}\}
    \]
is the uncolored version of the graph $\Grel(M,\pS)$ defined above.
\end{remark}

\begin{example}
  \label{ex:U23}
  Let $M = U(2,3)$ be the uniform matroid on 3 elements. We know from above that the set of bases $\ssets{\pB}{M} = \{ 12, 13, 23 \}$ (as usual, we abbreviate, e.g., $\{1,2\}$ by $12$) defines an isomorphism structure. The pointed bases are simply
  \begin{equation*}
  \pssets{\pB}{M} = \{ \ps{12}{1}, \ps{12}{2}, \ps{13}{1}, \ps{13}{3}, \ps{23}{2}, \ps{23}{3}\}.
  \end{equation*}
  The graphs $\Gbip(M,\pS)$, $\Gline(M,\pS)$ and the colored graph $\Grel(M,\pS)$ are illustrated in \Cref{fig:U23}. 
\end{example}
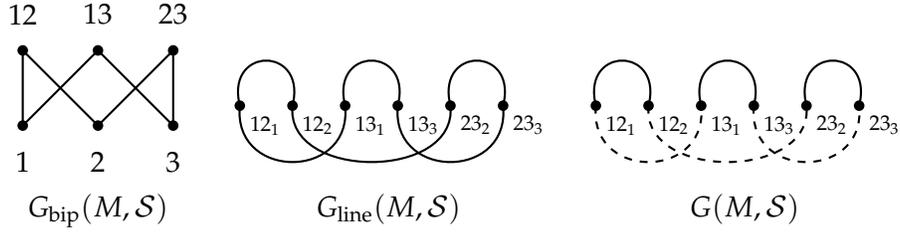
\begin{figure}[t]
\begin{tabular}{ccc}
\begin{tikzpicture}[scale=1]
  \fill (0,0) coordinate (12) circle (2pt) node[above=6pt] {12};
  \fill (1,0) coordinate (13) circle (2pt) node[above=6pt] {13};
  \fill (2,0) coordinate (23) circle (2pt) node[above=6pt] {23};
  \fill (0,0) ++(0,-1) coordinate (1) circle (2pt) node[below=6pt] {1};
  \fill (1,0) ++(0,-1) coordinate (2) circle (2pt) node[below=6pt] {2};
  \fill (2,0) ++(0,-1) coordinate (3) circle (2pt) node[below=6pt] {3};
  \draw[thick,solid] (12) -- (1);
  \draw[thick,solid] (12) -- (2);
  \draw[thick,solid] (13) -- (1);
  \draw[thick,solid] (13) -- (3);
  \draw[thick,solid] (23) -- (2);
  \draw[thick,solid] (23) -- (3);
\end{tikzpicture} &
\begin{tikzpicture}[scale=1]
  \fill (0,0) coordinate (a) circle (2pt) node[below right] {\scriptsize $12_1$};
  \fill (0.7,0) coordinate (b) circle (2pt) node[below right] {\scriptsize $12_2$};
  \fill (1.4,0) coordinate (c) circle (2pt) node[below right] {\scriptsize $13_1$};
  \fill (2.1,0) coordinate (d) circle (2pt) node[below right] {\scriptsize $13_3$};
  \fill (2.8,0) coordinate (e) circle (2pt) node[below right] {\scriptsize $23_2$};
  \fill (3.5,0) coordinate (f) circle (2pt) node[below right] {\scriptsize $23_3$};
  \draw[thick] (a) .. controls +(-0.2,0.8) and +(0.2,0.8) .. (b);
  \draw[thick] (c) .. controls +(-0.2,0.8) and +(0.2,0.8) .. (d);
  \draw[thick] (e) .. controls +(-0.2,0.8) and +(0.2,0.8) .. (f);
  \draw[thick] (a) .. controls +(-0.1,-1) and +(0.1,-1) .. (c);
  \draw[thick] (b) .. controls +(-0.1,-1) and +(0.1,-1) .. (e);
  \draw[thick] (d) .. controls +(-0.1,-1) and +(0.1,-1) .. (f);
\end{tikzpicture} &
\begin{tikzpicture}[scale=1]
  \fill (0,0) coordinate (a) circle (2pt) node[below right] {\scriptsize $12_1$};
  \fill (0.7,0) coordinate (b) circle (2pt) node[below right] {\scriptsize $12_2$};
  \fill (1.4,0) coordinate (c) circle (2pt) node[below right] {\scriptsize $13_1$};
  \fill (2.1,0) coordinate (d) circle (2pt) node[below right] {\scriptsize $13_3$};
  \fill (2.8,0) coordinate (e) circle (2pt) node[below right] {\scriptsize $23_2$};
  \fill (3.5,0) coordinate (f) circle (2pt) node[below right] {\scriptsize $23_3$};
  \draw[thick] (a) .. controls +(-0.2,0.8) and +(0.2,0.8) .. (b);
  \draw[thick] (c) .. controls +(-0.2,0.8) and +(0.2,0.8) .. (d);
  \draw[thick] (e) .. controls +(-0.2,0.8) and +(0.2,0.8) .. (f);
  \draw[thick, dashed] (a) .. controls +(-0.1,-1) and +(0.1,-1) .. (c);
  \draw[thick, dashed] (b) .. controls +(-0.1,-1) and +(0.1,-1) .. (e);
  \draw[thick, dashed] (d) .. controls +(-0.1,-1) and +(0.1,-1) .. (f);
\end{tikzpicture} \\%
$\Gbip(M,\pS)$&$\Gline(M,\pS)$&$\Grel(M,\pS)$
\end{tabular}
\caption{Constructing the relation colored graph of $ M =U(2,3)$ and $\pB (M)$. For brevity, we write, e.g., $ab_a$ for the pointed set $(\{a,b\},a)$. }

\label{fig:U23}
\end{figure}

The $(M,N,\pS)$--\textit{matroid isomorphism game} is a two-player cooperative nonlocal game defined in the following way. 
The input and output set for both Alice and Bob is $\pS_{\bullet}(M) \sqcup \pS_{\bullet}(N)$. The referee chooses uniformly at random elements $\fa$ and $\fb$ of $\pS_{\bullet}(M) \sqcup \pS_{\bullet}(N)$ to give to Alice and Bob, respectively. Alice returns $\fx$ and Bob returns $\fy$, both in $\pS_{\bullet}(M) \sqcup \pS_{\bullet}(N)$. Alice and Bob win, i.e., $V(\fx, \fy | \fa, \fb) = 1$, if the following two conditions are met. 
The first condition is that $\fa \in \pS_{\bullet}(M)$ if and only if $\fx \in \pS_{\bullet}(N)$ and $\fb \in \pS_{\bullet}(M)$ if and only if $\fy \in \pS_{\bullet}(N)$, i.e., the players have to answer with a pointed set of the matroid they did not receive an input from. Provided this condition  is met,  Suppose $\{\fc, \fv \} =  \{\fa, \fx\}$ where  $\fc \in \pS_{\bullet}(M)$ and $\fv \in \pS_{\bullet}(N)$ and let $\{ \fd, \fw \} = \{\fb, \fy\}$ be defined in a similar way. The second condition is 
\begin{equation*}
    \rel_{\ssets \pS M}(\fc, \fd) = \rel_{\ssets \pS N}(\fv , \fw).
\end{equation*}

\begin{remark}
\label{rmk:colored+graph+iso}
Note that the $(M,N,\pS)$-matroid isomorphism game is the same as the colored graph isomorphism game, as defined in \cite{robersonschmidt}, of the relation colored graphs $\Grel(M,\pS)$ and $\Grel(N,\pS)$. Therefore, one could use this connection to obtain our next results from known facts about graph isomorphism games \textit{and} how the colored graph isomorphisms are related to matroid isomorphisms. In this section, our approach is to directly work with matroid isomorphisms and their interplay with strategies for the matroid isomorphism game. We make use of the relation colored graphs in \Cref{sec:disj-aut}.   
\end{remark}

\begin{proposition}\label{prop:matroid+bisynchronous}
    The $(M, N, \pS)$--matroid isomorphism game is bisynchronous.
\end{proposition}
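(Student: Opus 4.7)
The plan is to unpack the definition of bisynchronicity and verify each of its three ingredients directly from the construction of the $(M,N,\pS)$--matroid isomorphism game. Recall that a game is bisynchronous when (i) Alice and Bob share a common input set and a common output set, (ii) the synchronous condition $V(\fx,\fy\mid\fa,\fa)=0$ holds whenever $\fx\neq\fy$, and (iii) the additional condition $V(\fx,\fx\mid\fa,\fb)=0$ holds whenever $\fa\neq\fb$. Item (i) is immediate: both players have input set and output set equal to $\pS_{\bullet}(M)\sqcup \pS_{\bullet}(N)$ by definition.

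For (ii), I would start from a pair of equal inputs $\fa=\fb$ and argue by the symmetry between $M$ and $N$ that we may assume $\fa\in\pS_{\bullet}(M)$. The winning condition then forces both responses $\fx$ and $\fy$ to lie in $\pS_{\bullet}(N)$, so in the notation of the game definition we have $\fc=\fd=\fa$ and $\fv=\fx$, $\fw=\fy$. Since clearly $\rel_{\pS(M)}(\fa,\fa)=0$, a winning pair of responses must satisfy $\rel_{\pS(N)}(\fx,\fy)=0$, which by the definition of $\rel$ means $S_{\fx}=S_{\fy}$ and $p_{\fx}=p_{\fy}$, i.e.\ $\fx=\fy$. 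This is exactly the synchronous condition.

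For (iii), the argument is symmetric with the roles of inputs and outputs swapped. Assume the players respond with the same $\fx=\fy$, and by symmetry assume $\fx\in\pS_{\bullet}(N)$. The winning condition forces $\fa,\fb\in\pS_{\bullet}(M)$, and then $\rel_{\pS(N)}(\fx,\fx)=0$ together with $\rel_{\pS(M)}(\fa,\fb)=\rel_{\pS(N)}(\fx,\fy)$ yields $\rel_{\pS(M)}(\fa,\fb)=0$, hence $\fa=\fb$.

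There is no real obstacle here, since both (ii) and (iii) reduce to the elementary observation that $\rel_{\pS(\,\cdot\,)}(\bm{u},\bm{v})=0$ if and only if $\bm{u}=\bm{v}$, which is the $0$ case in the definition of $\rel$. The only point that requires any care is to note that the ``matroid-swapping'' clause of the winning condition forces the inputs and outputs to sit in opposite halves of $\pS_{\bullet}(M)\sqcup\pS_{\bullet}(N)$, so that the relation $\rel$ is being evaluated on the correct matroid in each case.
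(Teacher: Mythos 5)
Your proof is correct and follows essentially the same route as the paper's: reduce both the synchronous and bisynchronous conditions to the observation that $\rel$ takes the value $0$ exactly on equal pairs, after noting that the matroid-swapping clause pins down which half of $\pS_{\bullet}(M)\sqcup\pS_{\bullet}(N)$ each input and output lies in. The paper states only the synchronous half explicitly and leaves the bisynchronous half as ``a similar argument,'' which is precisely the symmetric argument you spell out.
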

\begin{proof}
Suppose both Alice and Bob receive $\fa$, and assume  without loss of generality that $\fa\in \pssets \pS M$. If Alice and Bob return $\fx, \fy \in \pssets \pS N$ (respectively) with $\fx \neq \fy$, then $\rel_{\pS(M)}(\fa, \fa) = 0$ and $\rel_{\pS(N)}(\fx,\fy) \neq 0$. This shows that the game is synchronous and a similar argument shows that this game is bisynchronous. 
\end{proof}

Because this game is synchronous, a perfect classical deterministic strategy is defined by a single function used by both Alice and Bob. 
Suppose $\varphi\colon M \to N$ is an isomorphism with inverse $\psi\colon N \to M$. One particular perfect deterministic strategy is  given by the function 
\begin{equation*}
    \Phi\colon \pssets \pS M  \sqcup \pssets \pS N \to \pssets \pS M \sqcup \pssets \pS N
\end{equation*}
where $\Phi(\fa) = (\varphi(S_{\fa}), \varphi(p_{\fa}))$ if $\fa \in \psSets M$ or $\Phi(\fa) = (\psi(S_{\fa}), \psi(p_{\fa}))$ if $\fa \in \psSets N$. Such a perfect deterministic strategy is said to be \textit{induced} by a matroid isomorphism. 

\begin{theorem}
\label{thm:iso-iff-classical}
	Suppose $M$ and $N$ are matroids and $\pS$ is a matroid isomorphism structure that covers $M$ and $N$. Every perfect deterministic strategy of the $(M, N, \pS)$--matroid isomorphism game is induced by a matroid isomorphism. 
    In particular, $M$ and $N$ are isomorphic if and only if the $(M, N, \pS)$--matroid isomorphism game has a perfect deterministic strategy.
\end{theorem}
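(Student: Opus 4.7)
The plan is to extract a matroid isomorphism $\varphi\colon \ssets E M \to \ssets E N$ from any perfect deterministic strategy; the converse direction is already established by the paragraph preceding the statement. Since the game is synchronous by \Cref{prop:matroid+bisynchronous}, a perfect deterministic strategy is given by a single function $\Phi\colon \psSets M \sqcup \psSets N \to \psSets M \sqcup \psSets N$. The first clause of the winning condition, applied to inputs both drawn from the same half, forces $\Phi(\psSets M) \subseteq \psSets N$ and $\Phi(\psSets N) \subseteq \psSets M$.

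I would first show that $\Phi$ is an involution, hence a bijection exchanging the two halves. Feeding Alice the input $\fa \in \psSets M$ and Bob the input $\Phi(\fa) \in \psSets N$, their outputs under the strategy are $\Phi(\fa)$ and $\Phi^{2}(\fa)$. Unpacking the winning condition yields
\begin{equation*}
\rel_{\pS(M)}(\fa, \Phi^{2}(\fa)) \;=\; \rel_{\pS(N)}(\Phi(\fa), \Phi(\fa)) \;=\; 0,
\end{equation*}
forcing $\Phi^{2}(\fa) = \fa$.

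Next I would define $\varphi\colon \ssets E M \to \ssets E N$. For $p \in \ssets E M$, the covering hypothesis supplies some $\fa = (S, p) \in \psSets M$; set $\varphi(p) := p_{\Phi(\fa)}$. Well-definedness amounts to showing that for any two choices $\fa = (S,p)$ and $\fa' = (S',p)$, the images $\Phi(\fa)$ and $\Phi(\fa')$ have the same point-component: inputting $(\fa,\fa')$ into the game gives $\rel_{\pS(M)}(\fa,\fa') \in \{0,1\}$, and the winning condition forces the same value on the outputs, which pins down equality of point-components. Applying the same construction to $\Phi$ on $\psSets N$ produces a map $\ssets E N \to \ssets E M$, and the involutivity of $\Phi$ makes it a two-sided inverse of $\varphi$. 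An entirely parallel argument yields a bijection $\tau\colon \pS(M) \to \pS(N)$ defined by $\tau(S) := S_{\Phi(S,p)}$ for any $p \in S$, well-defined because $\rel_{\pS(M)}((S,p),(S,p')) \in \{0,2\}$ forces matching set-components in the outputs.

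Finally, I would verify $\varphi(S) = \tau(S)$ for every $S \in \pS(M)$. By construction, $\Phi(S,p) = (\tau(S), \varphi(p))$ for every $p \in S$, so $\varphi$ restricts to an injection $S \hookrightarrow \tau(S)$; applying the same reasoning to $\Phi^{-1} = \Phi$ in reverse gives surjectivity, whence $\varphi(S) = \tau(S) \in \pS(N)$. Symmetry yields the opposite inclusion, and $\varphi$ is then a matroid isomorphism inducing $\Phi$. The main obstacle is the bookkeeping: the relation values $1$ and $2$ each encode one half of the isomorphism (point-map versus set-map), and weaving them together into a single $\varphi$ that agrees with $\tau$ under restriction requires both the involutive structure of $\Phi$ and the covering hypothesis to ensure every ground set element is reached.
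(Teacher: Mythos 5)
Your proof is correct and follows essentially the same route as the paper's. The paper's \Cref{lem:perfect-classical} is exactly your first two steps: the restrictions $\Theta$ and $\Xi$ of $\Phi$ to the two halves are mutually inverse (your statement that $\Phi$ is an involution, proved by the same computation $\rel_{\pS(M)}(\fa,\Phi^2(\fa)) = \rel_{\pS(N)}(\Phi(\fa),\Phi(\fa)) = 0$), and the induced point maps $\vartheta$ and $\xi$ are well defined via preservation of the relation values $\{0,1\}$ and are mutually inverse. The theorem's proof then extracts the common set-component $X=\tau(S)$ from preservation of the value $2$, just as you do. The one place you genuinely diverge is in closing the argument, i.e., showing $\tau(S)\subseteq\varphi(S)$: the paper argues by contradiction, positing an extra element $t_{k+1}\in X\setminus\vartheta(A)$ and deriving $A=S_{\fb_{k+1}}$ from a $\rel=2$ constraint while injectivity of $\xi$ forbids it; you instead rerun the containment argument for $\Phi$ on the $N$ side and use $\tau'(\tau(S))=S$ (which follows from involutivity) to get the reverse inclusion directly. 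Both are valid, and your version is arguably cleaner since it reuses the involutive structure rather than introducing a separate contradiction step; like the paper, it quietly assumes every $S$ under discussion is nonempty, which is automatic since $S$ arises as $S_{\fa}$ for some pointed set $\fa$.
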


\noindent First, we collect some properties of a perfect deterministic strategy in the following lemma. 

\begin{lemma}
\label{lem:perfect-classical}
Suppose $\Phi\colon \pssets{\pS}{M} \sqcup \pssets{\pS}{N} \to  \pssets{\pS}{M} \sqcup \pssets{\pS}{N}$ defines a perfect deterministic strategy. We have the following. 
\begin{enumerate}
\item The restriction of $\Phi$ to $\pssets \pS M$ is a function $\Theta \colon \pssets{\pS}{M} \to \pssets{\pS}{N}$ and the restriction of $\Phi$ to $\pssets \pS N$ is a function $\Xi \colon \pssets{\pS}{N} \to \pssets{\pS}{M}$. Furthermore, $\Theta$ and $\Xi$ are inverses of each other. 
\item There are functions $\vartheta \colon \ssets E M \to E(N)$ and $\xi \colon E(N) \to \ssets E M$ that make the following diagrams commute:
\begin{equation*}
\begin{tikzcd}
 \psSets M \arrow{r}{\Theta} \arrow{d}{} & \psSets N \arrow{d}{} \\%
\ssets{E}{M} \arrow{r}{\vartheta}& \ssets{E}{N}
\end{tikzcd} 
\quad \text{and} \quad
\begin{tikzcd}
 \psSets N \arrow{r}{\Xi} \arrow{d}{} & \psSets M \arrow{d}{} \\%
\ssets{E}{N} \arrow{r}{\xi}& \ssets{E}{M}
\end{tikzcd} 
\end{equation*}
The vertical arrows in the above diagram are defined by sending $\fa$ to $p_{\fa}$. Furthermore, $\vartheta$ and $\xi$ are inverses of each other. 
\end{enumerate}
\end{lemma}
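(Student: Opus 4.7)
The plan is to leverage the synchronous structure of the $(M,N,\pS)$--matroid isomorphism game together with the first winning condition, which forces the image of $\psSets{M}$ under $\Phi$ to lie in $\psSets{N}$ and vice versa. The covering hypothesis on $\pS$ then supplies enough pointed sets to transfer the information carried by the second coordinate down to a well-defined map on ground sets.

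For part (1), the first winning condition applied to any input where Alice receives $\fa \in \psSets{M}$ forces $\Phi(\fa) \in \psSets{N}$, so $\Phi$ restricts to a function $\Theta \colon \psSets{M} \to \psSets{N}$; the definition of $\Xi$ is analogous. To show the two compositions give the identity, I would run the game with Alice receiving $\fa \in \psSets{M}$ and Bob receiving $\Theta(\fa) \in \psSets{N}$. Unpacking the second winning condition with $\fc = \fa$, $\fv = \Theta(\fa)$, $\fw = \Theta(\fa)$, and $\fd = \Xi(\Theta(\fa))$ yields $\rel_{\pS(M)}(\fa, \Xi(\Theta(\fa))) = \rel_{\pS(N)}(\Theta(\fa), \Theta(\fa)) = 0$, which forces $\Xi(\Theta(\fa)) = \fa$. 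The other composition $\Theta \circ \Xi$ follows symmetrically by swapping the roles of $M$ and $N$.

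For part (2), the strategy is to use the fact that the relation $\rel$ cleanly separates agreement in the second coordinate (values $0,1$) from disagreement (values $2,3$), so $\Theta$ must preserve second-coordinate equality. Concretely, if $\fa, \fb \in \psSets{M}$ share second coordinate $p_\fa = p_\fb$, then $\rel_{\pS(M)}(\fa, \fb) \in \{0, 1\}$, and the winning condition applied to the query $(\fa, \fb)$ forces $\rel_{\pS(N)}(\Theta(\fa), \Theta(\fb)) \in \{0, 1\}$, hence $p_{\Theta(\fa)} = p_{\Theta(\fb)}$. This allows the definition $\vartheta(p) := p_{\Theta(\fa)}$ for any $\fa$ with $p_\fa = p$; such an $\fa$ exists precisely because $\pS$ covers $M$. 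Defining $\xi$ symmetrically from $\Xi$, the equality $\xi \circ \vartheta = \mathrm{id}_{\ssets{E}{M}}$ follows from $\Xi \circ \Theta = \mathrm{id}_{\psSets{M}}$ established in (1), combined with the surjectivity of the projection $\psSets{M} \twoheadrightarrow \ssets{E}{M}$, which is once more exactly the covering condition.

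The main bookkeeping obstacle I anticipate is carefully tracking which pointed set belongs to which matroid in the input/output tuples $(\fc, \fv)$ and $(\fd, \fw)$ of the second winning condition, since the roles are determined by the inputs and outputs rather than by a fixed side. Once this is set up, every claim reduces to short case analysis on the four possible values of $\rel$, and part (2) reduces to the simple observation that $\Theta$ preserves the partition of $\psSets{M} \times \psSets{M}$ into "same second coordinate" versus "different second coordinate".
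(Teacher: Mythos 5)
Your proposal is correct and follows essentially the same route as the paper's proof: the first winning condition gives well-definedness of $\Theta$ and $\Xi$, playing the game on the pair $(\fa, \Theta(\fa))$ forces $\rel_{\pS(M)}(\fa,\Xi(\Theta(\fa)))=\rel_{\pS(N)}(\Theta(\fa),\Theta(\fa))=0$ and hence that they are inverses, and the $\{0,1\}$ versus $\{2,3\}$ dichotomy of $\rel$ together with the covering hypothesis yields well-defined maps $\vartheta$ and $\xi$. The only (harmless) divergence is at the very end: the paper establishes $\xi\circ\vartheta=\mathrm{id}$ by playing the game once more on a pair $(\fa,\fy)$ with $p_{\fy}=\vartheta(p_{\fa})$, whereas you deduce it directly from $\Xi\circ\Theta=\mathrm{id}$, the commuting squares, and surjectivity of $\fa\mapsto p_{\fa}$, which is a slightly cleaner bookkeeping of the same facts.
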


\begin{proof}
    Consider (1). In a perfect deterministic strategy, Alice must return an element of $\pssets \pS N$ if and only if she receives an element of $\pssets \pS M$, and similarly for Bob. This proves that the restricted functions $\Theta$ and $\Xi$ are well defined. Next, we have
    \begin{equation*}
        \rel_{\pS(M)}(\Xi(\Theta(\fa)), \fa) = \rel_{\pS(N)}(\Theta(\fa), \Theta(\fa)) = 0
    \end{equation*}
    and so $\Xi \circ \Theta$ is the identity. Similarly, we have that $\Theta \circ \Xi$ is also the identity, and hence $\Theta$ and $\Xi$ are inverses of each other. 

    Now consider (2). Suppose $\fa$ and $\fb$ are in $\pssets \pS M$ such that $p_{\fa} = p_{\fb}$, $\Theta(\fa) = \fx$, and $\Theta(\fb) = \fy$. Since 
\begin{equation}
\label{eq:abxy}
     \rel_{\ssets \pS M}(\fa, \fb) = \rel_{\ssets \pS N}(\fx, \fy)
\end{equation}
and $\rel(\fa, \fb) \in \{0,1\}$,  we must also have $\rel(\fx, \fy) \in \{0,1\}$. Therefore $p_{\fx} = p_{\fy}$. Together with the fact that $\pS$ covers $M$, we have a well defined function $\vartheta$ that makes the above diagram commute. We have in a similar way the function $\xi$. 

Next we claim that $\vartheta$ and $\xi$ are inverses of each other. Let $p\in \ssets E M$ and $q=\vartheta(p)$.  Given $\fa \in \pssets{\pS}{M}$ and $\fy \in \pssets{\pS}{N}$ with $p_{\fa} = p$ and $p_{\fy} = q$, we have $\Theta(\fa) = \fx$ and $\Xi(\fy) = \fb$ with $p_{\fx} = \vartheta(p)$ and $p_{\fb} = \xi(q)$. We have the same equality as in Equation \eqref{eq:abxy}
and it's value is $0$ or $1$ since $\vartheta(p) = q$. So $p = (\xi\circ \vartheta)(q)$, i.e., $\xi\circ \vartheta$ is the identity. A similar argument shows that $\vartheta \circ \xi$ is also the identity, so $\vartheta$ and $\xi$ are inverses of each other. 
\end{proof}

\begin{proof}[Proof of \Cref{thm:iso-iff-classical}]
Suppose a perfect deterministic strategy exists.  
By  \Cref{lem:perfect-classical}(1), this defines a pair of functions $\Theta \colon \pssets \pS M \to \pssets \pS N$ and $\Xi \colon \pssets \pS N \to \pssets \pS M$ that are inverses to each other. Let  $\vartheta$ and $\xi$ be the functions defined in part (2) of that lemma. These functions satisfy $\Theta(\fa) = \fx$ and $\Xi(\fy) = \fb$ where $p_{\fx} = \vartheta(p_{\fa})$ and $p_{\fb} = \xi(p_{\fy})$.  We must show that  $\vartheta$ is an isomorphism, i.e., that $A \in \pS(M)$ if and only if $ \vartheta(A) \in \pS(N)$. Suppose $A = \{s_1, \ldots, s_k\} \in \ssets{\pS}{M}$, and denote by $\fa_i = (A, s_i)$.  Set
\begin{equation*}
    \fx_i = \Theta(\fa_i), \quad \text{for} \quad 1\leq i \leq k.
\end{equation*}  
For $j\geq 2$, we have that $\rel(\fa_1, \fa_j) = 2$, and hence $\rel(\fx_{1}, \fx_{j}) = 2$. So the $S_{\fx_j}$'s are all equal to each other; call this set $X$. Because $t_{i}=\vartheta(s_i)$, we see that $X$ contains $\vartheta(A)$. 

Suppose $X$ properly contains $\vartheta(A)$, say  $X=\{t_1, \ldots, t_k, t_{k+1}, \ldots, t_{\ell}\}$ where  $\ell > k$. Set $\fy_{i} = (X,t_i)$ and $\fb_i =  \Xi(\fx_i)$ for $k+1 < i \leq \ell$. Because $\xi$ is injective, we have that $\xi(t_i) \notin A$, and so $A\neq S_{y_{k+1}}$. But $\rel_{\ssets \pS N}(\fx_{k}, \fy_{k+1}) = 2$ implies that $\rel_{\ssets \pS M}(\fa_k, \fb_{k+1}) = 2$, so $A = S_{\fy_{k+1}}$, which is a contradiction. This proves that $X=\vartheta(A)$.  A similar argument applied to $\xi$ shows that if $\vartheta(A) \in \ssets \pS N$ then $A\in \ssets \pS M$. 
\end{proof}

\begin{corollary}
\label{cor:induced+iso}
    If $\Theta \colon \psSets M \to \psSets N$ is a bijection satisfying 
    \begin{equation*}
        \rel_{\pS(M)}(\fa, \fb) = \rel_{\pS(N)}(\Theta(\fa), \Theta(\fb))
    \end{equation*}
    for all $\fa,\fb\in \psSets M$, then $\Theta$ is induced by a matroid isomorphism $M \to N$.
\end{corollary}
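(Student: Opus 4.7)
The plan is to reduce this corollary to \Cref{thm:iso-iff-classical}. Namely, I will promote the bijection $\Theta$ to a perfect deterministic strategy $\Phi$ for the $(M, N, \pS)$--matroid isomorphism game, and then invoke the theorem to conclude that $\Phi$, and hence $\Theta$, is induced by a matroid isomorphism.

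To build the strategy, I define $\Phi \colon \psSets M \sqcup \psSets N \to \psSets M \sqcup \psSets N$ by $\Phi|_{\psSets M} = \Theta$ and $\Phi|_{\psSets N} = \Theta^{-1}$; this is well defined since $\Theta$ is a bijection. By construction $\Phi$ sends $\psSets M$ to $\psSets N$ and vice versa, so the first requirement of the game (that a player must answer from the matroid they did not receive an input from) is automatically met.

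The main verification is the relation-preserving condition $\rel_{\ssets \pS M}(\fc, \fd) = \rel_{\ssets \pS N}(\fv, \fw)$ for every pair of inputs $(\fa, \fb)$, which I will check by a case analysis according to which of $\psSets M$ and $\psSets N$ contains $\fa$ and $\fb$. When both $\fa$ and $\fb$ lie in $\psSets M$, the condition is precisely the hypothesis on $\Theta$. When both lie in $\psSets N$, it suffices to observe that $\Theta^{-1}$ also preserves the relation, by applying the hypothesis to $\Theta^{-1}(\fa)$ and $\Theta^{-1}(\fb)$. In the two mixed cases, after identifying which of $\{\fa, \Phi(\fa)\}$ and $\{\fb, \Phi(\fb)\}$ belongs to $\psSets M$ versus $\psSets N$, the required equality reduces to the hypothesis applied to the pair in $\psSets M$ and its image under $\Theta$.

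Having established $\Phi$ as a perfect deterministic strategy, \Cref{thm:iso-iff-classical} yields a matroid isomorphism $\varphi \colon M \to N$ that induces $\Phi$; restricting back to $\psSets M$ exhibits $\Theta$ as induced by $\varphi$, as required. The only genuine content of the argument is the bookkeeping in the case analysis, and even this amounts to little more than unwinding the definition of the win condition in the game, so I do not expect any real obstacle.
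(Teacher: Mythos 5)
Your argument is correct and is essentially identical to the paper's proof: the paper likewise forms $\Theta \sqcup \Theta^{-1}$, notes it is a perfect deterministic strategy, and invokes \Cref{thm:iso-iff-classical}. Your case analysis just spells out the verification the paper leaves implicit.
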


\begin{proof}
        The function $\Theta \sqcup \Theta^{-1} \colon \psSets M \sqcup \psSets N \to \psSets M \sqcup \psSets N$ defines a perfect deterministic strategy for the $(M,N,\pS)$--matroid isomorphism game. By \Cref{thm:iso-iff-classical}, this strategy must be induced by a matroid isomorphism $\varphi \colon M \to N$. 
\end{proof}

\subsection{Perfect quantum strategies for the matroid isomorphism game} 
As before, let $M$ and $N$ be two matroids and let $\pS$ be an isomorphism structure that covers $M$ and $N$.  With  \Cref{thm:iso-iff-classical} in mind, we say that $M$ and $N$ are $\pS$-\textit{quantum isomorphic} if the $(M,N,\pS)$--isomorphism game has a perfect quantum commuting strategy. In this subsection, we give a characterization of $\pS$-quantum isomorphic matroids that we use in \S\ref{sec:algebra}.
First, we review the following general fact about synchronous nonlocal games, see \cite[Cor.~5.6]{PaulsenSeveriniStahlkeTodorovWinter}.

\begin{lemma}\label{lem:synchronous}
Let $\pG$ be a synchronous nonlocal game with both input sets equal to $X$ and both output sets equal to $Y$. There exists a perfect quantum commuting strategy if and only if there is a $C^*$-algebra $A$, PVM's $(F_{ax})_{x\in Y} \subseteq A$ for all $a\in  X$, and a faithful tracial state $\tau \colon A\to \C$ such that
\begin{align}\label{eq:synch}
    \tau(F_{ax} F_{by}) = 0 \quad \text{if} \quad \pred{a}{b}{x}{y} = 0.
\end{align}
\end{lemma}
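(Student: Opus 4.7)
The plan is to prove the two implications of \Cref{lem:synchronous} separately, with the bulk of the effort going into the forward direction.

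For the $(\Leftarrow)$ direction, I would apply the GNS construction to the tracial pair $(A, \tau)$. This produces a Hilbert space $H = L^{2}(A,\tau)$ with a cyclic unit vector $\psi$ corresponding to $1 \in A$, a left regular representation $\lambda \colon A \to B(H)$, and a right regular (anti-)representation $\rho \colon A \to B(H)$ whose image commutes with $\lambda(A)$ precisely because $\tau$ is a trace. Taking Alice's PVMs to be $E_{ax} = \lambda(F_{ax})$ and Bob's to be $E'_{by} = \rho(F_{by})$ yields commuting PVMs on $H$, and the identification
\begin{equation*}
\psi^{*}(E_{ax} E'_{by}) \psi \;=\; \tau(F_{ax} F_{by})
\end{equation*}
immediately shows that the strategy loses with probability zero whenever the vanishing hypothesis $\pred{a}{b}{x}{y} = 0$ forces the right-hand side to be zero.

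For the $(\Rightarrow)$ direction, I start with a perfect quantum commuting strategy $(H, \psi, (E_{ax}), (F_{by}))$ and first upgrade the POVMs to PVMs using synchronicity. Since $\pred{a}{a}{x}{y} = 0$ whenever $x \neq y$, perfectness forces $\psi^{*}(E_{ax} F_{ay})\psi = 0$ for all $a \in X$ and $x \neq y$. Combined with the completeness identities $\sum_{x} E_{ax} = \sum_{y} F_{ay} = I$ and the commutation $[E_{ax}, F_{ay}] = 0$, a standard Cauchy--Schwarz argument (as in the cited reference) produces the identities $E_{ax}\psi = F_{ax}\psi$ and implies that the $E_{ax}$ act as orthogonal projections on the cyclic subspace $K$ generated by $\psi$ under the $C^{*}$-algebra of Alice's operators. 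Restricting everything to $K$ I may assume the strategy is projective.

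I would then let $A \subseteq B(K)$ be the $C^{*}$-algebra generated by Alice's projections and define $\tau(T) = \psi^{*} T \psi$. Faithfulness is arranged by quotienting by the null ideal of $\tau$, which is harmless since the game probabilities factor through this quotient. The trace property on generators follows from the chain
\begin{equation*}
\tau(E_{ax} E_{by}) = \psi^{*}(E_{ax} F_{by}) \psi = \psi^{*}(F_{by} E_{ax}) \psi = \psi^{*}(E_{by} E_{ax}) \psi = \tau(E_{by} E_{ax}),
\end{equation*}
where the inner equalities use $F_{by}\psi = E_{by}\psi$ and the commutation $[E_{ax}, F_{by}] = 0$; extending by linearity and continuity promotes $\tau$ to a trace on all of $A$. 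Finally, the perfectness assumption reads $\tau(E_{ax} E_{by}) = \psi^{*}(E_{ax} F_{by}) \psi = 0$ exactly when $\pred{a}{b}{x}{y} = 0$, completing the proof. The main obstacle is the reduction to PVMs together with the derivation of $E_{ax}\psi = F_{ax}\psi$ in the forward direction; this is the technical core of the Paulsen--Severini--Stahlke--Todorov--Winter theorem, and I would invoke their operator-inequality arguments rather than redo them by hand.
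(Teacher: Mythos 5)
The paper does not actually prove this lemma---it quotes it as \cite[Cor.~5.6]{PaulsenSeveriniStahlkeTodorovWinter}---and your sketch is a faithful reconstruction of the standard argument from that reference: the GNS construction of $(A,\tau)$ with left/right multiplication operators for the backward direction, and the synchronicity-forces-projections reduction together with $E_{ax}\psi = F_{ax}\psi$ for the forward direction. The one imprecision is that ``extending by linearity and continuity'' does not upgrade traciality from length-two products of generators to all of $A$---you need the swap identity $\tau(w_1 w_2)=\tau(w_2 w_1)$ on arbitrary words in the $E_{ax}$, proved by the same trick of converting $E$'s to $F$'s at the vector $\psi$ and commuting them across---but this is part of the Paulsen--Severini--Stahlke--Todorov--Winter machinery you are already explicitly invoking, so the proposal is sound.
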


We have the following characterization of when the $(M,N,\pS)$--isomorphism game has a perfect quantum strategy. The statement and its proof are similar to \cite[Thm 5.4/5.13]{AtseriasEtAl}.

\begin{theorem}\label{thm:perfectstrat}
The $(M,N,\pS)$--isomorphism game has a perfect quantum commuting strategy if and only if there is a $C^*$-algebra $A$, projections $F_{\fa\fx} \in A$ for $\fa\in \psSets M$ and $\fx \in \psSets N$, and a faithful tracial state $\tau \colon A\to \C$ such that 
\begin{enumerate}
        \item $\sum_{\ft \in \psSets N} F_{\fa \ft} = 1$ for each $\fa \in \psSets M$, 
        \item $\sum_{\fs \in \psSets M} F_{\fs \fx} = 1$ for each $\fx \in \psSets N$, 
        \item $F_{\fa \fx} F_{\fb \fy} = 0$ if $\rel_{\pS(M)}(\fa, \fb) \neq \rel_{\pS(N)}(\fx, \fy)$,
    \end{enumerate}
    and $p(\fx, \fy | \fa, \fb) = \tau(F_{\fa\fx} F_{\fb\fy})$.
\end{theorem}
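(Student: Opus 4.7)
The plan is to specialize the preceding synchronous-strategy lemma to our game, which is bisynchronous by \Cref{prop:matroid+bisynchronous}. That lemma characterizes perfect quantum commuting strategies for synchronous games in terms of a $C^*$-algebra $A$, a faithful tracial state $\tau$, and PVMs $(F_{\fu\fv})_{\fv\in X}$ for each input $\fu\in X=\psSets M \sqcup \psSets N$, satisfying the synchronous vanishing $\tau(F_{\fu\fv}F_{\fu'\fv'})=0$ whenever $\pred{\fu}{\fu'}{\fv}{\fv'}=0$. My task is to translate that data into the three listed conditions (which involve only the projections $F_{\fa\fx}$ with $\fa\in\psSets M$ and $\fx\in\psSets N$) and back.

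For the forward direction, I first argue that the only potentially nonzero projections pair $\psSets M$ with $\psSets N$. Taking $\fu=\fu'$ in the synchronous vanishing and using the PVM orthogonality $F_{\fu\fv}F_{\fu\fv'}=\delta_{\fv\fv'}F_{\fu\fv}$, faithfulness of $\tau$ forces $F_{\fu\fv}=0$ whenever $\fv$ lies in the same matroid as $\fu$, since such answers violate the matroid-switching rule of the game. What remains is the desired family $(F_{\fa\fx})_{\fa\in\psSets M,\,\fx\in\psSets N}$, for which condition (1) is the PVM sum-to-identity property for inputs $\fa\in\psSets M$, and condition (3) is an immediate instance of the synchronous vanishing for input pair $(\fa,\fb)$ and answer pair $(\fx,\fy)$. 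Condition (2) is the ``dual'' PVM identity, which is where bisynchronicity plays its role: for $\fa\neq\fb$ in $\psSets M$ and $\fx\in\psSets N$, bisynchronicity gives $\pred{\fa}{\fb}{\fx}{\fx}=0$, so $\tau(F_{\fa\fx}F_{\fb\fx})=0$. Setting $P_\fx:=\sum_{\fa\in\psSets M}F_{\fa\fx}$, the usual tracial-state computation shows $\tau(P_\fx^2)=\tau(P_\fx)$; combined with $\sum_{\fx\in\psSets N}P_\fx = \sum_{\fa\in\psSets M}I = |\psSets M|\cdot I$ and faithfulness of $\tau$, this forces each $P_\fx$ to be the identity.

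For the converse, given algebraic data $(A,\tau,\{F_{\fa\fx}\})$ satisfying (1)--(3), I reconstruct a full synchronous system by defining $F_{\fx\fa}:=F_{\fa\fx}$ for $\fa\in\psSets M,\ \fx\in\psSets N$, and setting all same-matroid projections to zero. Conditions (1) and (2) then assert that $(F_{\fu\fv})_{\fv\in X}$ is a PVM for every input $\fu\in X$, while (3) together with the vanishing of same-matroid projections exhausts the cases where $\pred{\fu}{\fu'}{\fv}{\fv'}=0$. Invoking the synchronous lemma in reverse produces a perfect quantum commuting strategy with $p(\fx,\fy|\fa,\fb)=\tau(F_{\fa\fx}F_{\fb\fy})$, as desired.

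The main obstacle is condition (2), the ``dual'' PVM identity, since it requires leveraging bisynchronicity beyond mere synchronicity; this is essentially the content of the Paulsen--Rahaman correspondence for bisynchronous games, and can be either invoked directly or re-derived via the trace argument sketched above. All remaining steps are essentially formal bookkeeping between the full synchronous data on $X$ and the one-sided data $\{F_{\fa\fx}\}$.
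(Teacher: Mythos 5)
Your proposal follows the same skeleton as the paper's proof: specialize \Cref{lem:synchronous} to this game, use faithfulness of the trace to kill all ``same-matroid'' projections, read off condition (1) from the PVM row sums and condition (3) from the synchronous vanishing relation, and in the converse direction rebuild the full synchronous system by setting $F_{\fx\fa}:=F_{\fa\fx}$ and declaring same-matroid projections zero. The only genuine divergence is in how you obtain condition (2). The paper proves the transpose symmetry $F_{\fa\fx}=F_{\fx\fa}$ directly, using that $F_{\fa\fx}F_{\fx\fa'}=0$ for $\fa\neq\fa'$ and the analogous vanishing on the other side (both just instances of the game's win condition plus faithfulness), after which (2) is literally the row-sum identity for the PVM attached to the input $\fx\in\psSets N$. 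Your counting argument is in the spirit of the Paulsen--Rahaman analysis of bisynchronous games and is a legitimate alternative route.

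However, as written your derivation of (2) has a gap. From $\sum_{\fx\in\psSets N}P_\fx=|\psSets M|\cdot 1$ with each $P_\fx$ a projection, you can conclude $P_\fx=1$ for every $\fx$ only if you already know $|\psSets N|=|\psSets M|$: otherwise $\sum_{\fx\in\psSets N}(1-P_\fx)=(|\psSets N|-|\psSets M|)\cdot 1$ is a sum of positive operators that need not vanish termwise. That cardinality equality is not available at this point in the paper (it is deduced only later, as a consequence of quantum isomorphism), so it must be established inside the proof. The repair is short: the same counting applied to the PVMs attached to inputs in $\psSets N$ (whose nonzero members land in $\psSets M$) gives $|\psSets N|\cdot 1=\sum_{\fa\in\psSets M}\bigl(\sum_{\fx\in\psSets N}F_{\fx\fa}\bigr)\leq|\psSets M|\cdot 1$, and your computation gives the reverse inequality, whence equality and then $P_\fx=1$ for all $\fx$. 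Alternatively, adopting the paper's transpose-symmetry argument sidesteps the issue entirely. Everything else in your proposal, including the converse direction, matches the paper.
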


\begin{proof}
Recall that for the $(M,N,\pS)$--isomorphism game we have $X=Y=\pS_{\bullet}(M) \sqcup \pS_{\bullet}(N)$ and consider the PVM's $(F_{\fa\fx})_{\fx\in Y}$ obtained from \Cref{lem:synchronous}. We get that (3) is immediate, since $\tau$ is faithful. For (1), consider the case $\fa, \fx\in \psSets M$ and recall that the players always lose if one of them answers with a ground set element of the same matroid the player got a question from. Therefore
\begin{align*}
    \tau(F_{\fa\fx})=\tau(F_{\fa\fx}\sum_\fy F_{\fb\fy})=\sum_\fy \tau(F_{\fa\fx}F_{\fb\fy})=0
\end{align*}
by \Cref{eq:synch}, which yields $F_{\fa\fx}=0$ since $\tau$ is faithful. This shows (1), because 
\begin{equation*}
\sum_{\ft\in \psSets N} F_{\fa \ft}=\sum_{\ft\in Y} F_{\fa \ft}= 1
\end{equation*}
for each $\fa \in \psSets M$.

It remains to show (2) for the first direction. Note that $F_{\fa\fx}F_{\fx\fa'}=0$ for $\fa\neq \fa'$ since $V(\fx,\fa,\fa',\fx)=0$ in the matroid isomorphism game. Therefore
\begin{align*}
    F_{\fa\fx}=F_{\fa\fx}\sum_{\fa'}F_{\fx\fa'}=F_{\fa\fx}F_{\fx\fa}=\sum_{\tilde \fa}F_{\tilde{\fa}\fx}F_{\fx\fa}=F_{\fx\fa}.
\end{align*}
We deduce $\sum_{\fs\in \psSets M} F_{\fs \fx}=\sum_{\fs\in \psSets M} F_{\fx\fs}=\sum_{\fs\in Y} F_{\fx\fs}= 1$ for $\fx \in \psSets N$, where we also use $F_{\fx\fs}=0$ for $\fs\in \psSets N$. This shows (2).

For the other direction, suppose we have projections and a tracial state as in the theorem statement. Define $F_{\fa\fx}=F_{\fx\fa}$ and let $F_{\fa\fx}=0$ for $\fa, \fx\in \psSets M$ and $\fa, \fx\in \psSets N$. It is easy to see that $(F_{\fa\fx})_{\fx\in Y}$ are PVM's and (3) yields \Cref{eq:synch}. This completes the proof. 
\end{proof}

\begin{remark}
In a $C^*$-algebra $A$, projections $(P_x)_x\subseteq A$ such that $\sum_xP_x=1$ always fulfill the relation $P_xP_{y}=0$ for all $x \neq y$. Note that this is in general not true in a $\ast$-algebra, which is why we have the additional relations $P_xP_{y}=0$ for $x\neq y$. 
\end{remark}

\section{Nonisomorphic matroids that are quantum isomorphic}
\label{sec:quantum-iso-matroids}
We produce a pair of matroids that are quantum isomorphic but not isomorphic. This pair of matroids are defined on a ground set with 18 elements and have rank 3. More generally, we describe how to obtain a matroid from a special type of binary constraint system. 

\subsection{Matroids and binary constraint systems}
\label{sec:matroids+BCS}
A \textit{linear binary constraint system (LBCS)}, as defined in \cite{CleveLiuSlofstra, CleveMittal}, consists of a pair $(\pV, \pE)$  where $\pV$ is a set of $m$ binary-valued variables and $\pE$ is a set of $n$ linear equations in these variables with binary coefficients. The most famous examples come from the Mermin--Peres magic square \cite{Mermin}, which consists of the $m=9$ variables $x_1,\ldots, x_9$ and the $n=6$ equations
\begin{align}\label{magicsquare}
\begin{array}{ccc}
     x_1 x_2 x_3 = s_1,  &
     x_4 x_5 x_6 = s_2,  &
     x_7 x_8 x_9 = s_3, \\
     x_1 x_4 x_7 = s_4,  &
     x_2 x_5 x_8 = s_5,  &
     x_7 x_6 x_9 = s_6
\end{array} 
\end{align}
where each $s_i$ is $1$ or $-1$. (We write the linear equations multiplicatively so that we may use $\{-1,1\}$--valued variables instead of $\{0,1\}$--valued variables.)

First, given a matroid $M$, let us describe how to obtain a LBCS. Our prototype for this procedure is given by the way one obtains the magic square game from the matroid in \Cref{ex:magic-square-matroid}. The variables of the magic square game correspond to the ground set elements of this matroid and the equations correspond to the nonbases of this matroid. Actually, we find it more convenient to view the nonbases of this matroid as special kinds of hyperplanes of a matroid, i.e., cyclic hyperplanes, which we now describe. 

Recall from \S \ref{sec:matroids} that a hyperplane of $M$ is a flat of rank $\rk(M) - 1$ and the set of all hyperplanes is denoted by $\pH(M)$. A \textit{cyclic flat} is a flat that is a union of circuits, and a \textit{cyclic hyperplane} is a hyperplane that is also a cyclic flat. Denote by $\pZ(M)$ the set of cyclic flats of $M$ and $\pZ^{k}(M)$ the set of cyclic flats that have rank $\rk(M) - k$. In particular, $\pZ^{1}(M)$ consists of the set of cyclic hyperplanes of $M$. 

Similar to the collection of flats, the collection of cyclic flats forms a lattice. Given two cyclic flats $X$ and $Y$, their \textit{join} is $X\vee Y = \cl_{M}(X\cup Y)$ and their \textit{meet}, denoted by $X\wedge Y$, the union of all circuits contained in $X\cap Y$.
The lattice of cyclic flats, together with the restriction of the rank function to $\pZ(M)$, completely determine $M$. This leads to the following axiomatic characterization of matroids as formulated in \cite[Thm 3.2]{BonindeMier}.

\begin{theorem}
\label{thm:cyclic-flat-axioms}
    Let $E$ be a finite set,  $\pZ\subset 2^{E}$ a lattice (under inclusion), and $r\colon \pZ \to \N_{0}$ a function. There is a matroid $M$ whose cyclic flats are $\pZ$ and whose rank function restricted to $\pZ$ is $r$ if and only if
    \begin{enumerate}
        \item $r(0_{\pZ}) = 0$,
        \item $0 < r(Y) - r(X) < |Y\setminus X|$ for each pair $X,Y \in \pZ(M)$ with $X\subsetneq Y$, and
        \item $r(X) + r(Y) \geq r(X \vee Y) + r(X \wedge Y) + |(X\cap Y) \setminus (X\wedge Y)|$ for all $X,Y\in \pZ$. 
    \end{enumerate}
\end{theorem}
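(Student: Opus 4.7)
The plan is to prove both directions of this cryptomorphic characterization. For the necessity direction, I start with a matroid $M$ and verify that $\pZ(M)$ together with the restricted rank function satisfies (1)--(3). Condition (1) is immediate: the minimum element of $\pZ(M)$ is the set of loops of $M$, which has rank $0$. For (2), the lower bound $r(X) < r(Y)$ holds because $Y$ is a flat strictly containing $X$, so any element of $Y \setminus X$ increases rank when adjoined to $X$. The upper bound $r(Y) - r(X) < |Y \setminus X|$ uses that $Y$ is a union of circuits: every element of $Y \setminus X$ lies in some circuit of $M$ contained in $Y$, so $Y \setminus X$ is dependent in the contraction $M/X$. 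For condition (3), I combine standard submodularity $r(X) + r(Y) \geq r(X \cup Y) + r(X \cap Y)$ with the identity $r(X \cup Y) = r(X \vee Y)$ (rank is constant on closures) and $r(X \cap Y) = r(X \wedge Y) + |(X \cap Y) \setminus (X \wedge Y)|$ (elements of $X \cap Y$ outside the maximal cyclic subset $X \wedge Y$ are coloops of the restriction $M|(X \cap Y)$, hence contribute their full count to the rank).

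For sufficiency, I define a candidate rank function on $2^E$ by
\begin{equation*}
\rho(A) := \min_{X \in \pZ} \bigl( r(X) + |A \setminus X| \bigr),
\end{equation*}
verify the matroid rank axioms, and then show that the resulting matroid has $\pZ$ as its lattice of cyclic flats with $\rho$ restricting to $r$. Normalization $\rho(\emptyset) = 0$ follows from (1) applied at $X = 0_{\pZ}$, and unit increase is immediate since adjoining a single element to $A$ changes $|A \setminus X|$ by $0$ or $1$ for every $X$. Axiom (2) is used to show that $\rho(X) = r(X)$ for $X \in \pZ$: any other $Y \in \pZ$ must give $r(Y) + |X \setminus Y| \geq r(X)$, which is a consequence of iterating the upper bound in (2) along a chain from $X \wedge Y$ up to $X$ and $Y$.

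The main obstacle is submodularity of $\rho$. Given $A, B \subseteq E$, I choose $X, Y \in \pZ$ attaining the minima for $\rho(A)$ and $\rho(B)$, then bound $\rho(A \cup B) + \rho(A \cap B)$ from above using $X \vee Y$ and $X \wedge Y$ as test elements in the min-formula and apply axiom (3). The delicate bookkeeping is on the three regions $X \wedge Y$, $(X \cap Y) \setminus (X \wedge Y)$, and $X \triangle Y$, combined with how $A$ and $B$ intersect each region; axiom (3) supplies exactly the extra term $|(X \cap Y) \setminus (X \wedge Y)|$ needed to compensate for the fact that $X \cap Y$ is generally not itself in $\pZ$, so the minimum defining $\rho(A \cap B)$ cannot be evaluated at $X \cap Y$ directly but only at its cyclic part $X \wedge Y$.

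Finally, once $\rho$ is a matroid rank function, I identify the cyclic flats of the resulting matroid $M_\rho$ with $\pZ$. For $X \in \pZ$, the lower bound in (2) forces $X$ to be a flat of $M_\rho$, while the upper bound, applied to covering chains in $\pZ$, ensures that every element of $X$ lies in a circuit contained in $X$, so $X$ is cyclic. Conversely, any cyclic flat $F$ of $M_\rho$ must arise from an element of $\pZ$: writing $\rho(F) = r(X) + |F \setminus X|$ for some $X \in \pZ$, the flat and cyclic conditions on $F$ together with axiom (2) force $F = X$. Uniqueness of the matroid recovered from $(\pZ,r)$ is then immediate, since the min-formula expresses the rank function on all subsets in terms of the cyclic flat data.
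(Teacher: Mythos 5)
The paper does not actually prove this statement: it is imported verbatim from Bonin--de Mier \cite[Thm 3.2]{BonindeMier}, so there is no in-paper proof to compare your argument against. Judged on its own terms, your outline follows the standard route to this cryptomorphism. The necessity direction is essentially complete and correct (the upper bound in (2) via dependence of $Y\setminus X$ in $M/X$, and the decomposition $r(X\cap Y)=r(X\wedge Y)+|(X\cap Y)\setminus(X\wedge Y)|$ via coloops of $M|(X\cap Y)$, are both the right mechanisms). For sufficiency, the min-formula $\rho(A)=\min_{X\in\pZ}(r(X)+|A\setminus X|)$ is indeed the rank function determined by cyclic-flat data, and your submodularity scheme --- test $X\vee Y$ and $X\wedge Y$ in the minima for $A\cup B$ and $A\cap B$, use axiom (3) to pay for the defect $|(X\cap Y)\setminus(X\wedge Y)|$, and finish with the set-theoretic inequality $|(A\cup B)\setminus(X\cup Y)|+|(A\cap B)\setminus(X\cap Y)|\le|A\setminus X|+|B\setminus Y|$ --- does close. (Bonin--de Mier instead build the matroid from the independent sets $\{I: |I\cap X|\le r(X)\ \forall X\in\pZ\}$; the two constructions are equivalent.)

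Two caveats. First, your claim that $\rho(X)=r(X)$ for $X\in\pZ$ follows by ``iterating the upper bound in (2) along a chain'' is not right as stated: axiom (2) alone gives only $r(X)-r(X\wedge Y)\le|X\setminus(X\wedge Y)|-1=|X\setminus Y|+|(X\cap Y)\setminus(X\wedge Y)|-1$, which overshoots the needed bound $r(X)-r(Y)\le|X\setminus Y|$ by the term $|(X\cap Y)\setminus(X\wedge Y)|$. You must also invoke axiom (3) together with $r(X\vee Y)\ge r(X)$ (from (2)) to absorb that term; this is precisely where the submodular inequality (3) earns its keep a second time. Second, the proposal is a plan rather than a proof: the submodularity bookkeeping and, especially, the converse inclusion (every cyclic flat of $M_\rho$ lies in $\pZ$) are only described, and the latter genuinely requires an argument --- one must rule out flats whose minimum in the $\rho$-formula is attained at some $X\in\pZ$ with $F\setminus X\neq\emptyset$, using that such elements would be coloops of $M_\rho|F$. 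None of this is unfixable, but as written the sufficiency direction is an outline with the hardest verifications deferred.
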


Fix a collection $S = \left( s_{H} \suchthat H\in \cZ(M) \right)$ in $\{\pm 1\}^{\cZ^{1}(M)}$.  
We associate to $(M,S)$ a LBCS $(\pV, \pE)$ by setting  
\begin{equation*}
	\pV =  \{x_a \suchthat a\in \ssets E M\}, \quad \text{and} \quad \pE = \left\{ \prod_{a\in H} x_a = s_{H} \suchthat H\in \cZ^{1}(M) \right\}.
\end{equation*}

Next, we describe how to obtain a new matroid from a LBCS obtained from this matroid $M$. Our construction requires that $M$ is a rank $3$ sparse paving matroid. A matroid is \textit{sparse paving} if both the matroid and its dual are paving matroids (recall that a matroid is paving if the size of each circuit is at least the rank of the matroid). The following proposition gives a clear description of rank 3 sparse paving matroids. For this, we need the definition of a \textit{simple} matroid, which is a matroid $M$ that has no loops and such that every subset $A$ that has rank 1 is a singleton set. 

\begin{proposition}
\label{prop:sparse-paving}
	If $M$ is a rank $3$ matroid, then $M$ is sparse paving if and only if 
	  $M$ is simple and each cyclic hyperplane of $M$ has exactly $3$ elements. 
\end{proposition}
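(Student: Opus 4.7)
The plan is to reduce the sparse paving condition to two separate constraints—one from $M$ being paving, one from $M^{\vee}$ being paving—and translate each into the desired combinatorial conditions.

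First, I would unpack the paving condition on $M$. Since $M$ has rank $3$, requiring every circuit to have size at least $3$ is the same as forbidding loops and $2$-element circuits. A $1$-element circuit is a loop, and a $2$-element circuit is a parallel pair. Hence $M$ is paving if and only if $M$ is simple. This disposes of half the equivalence in one step.

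Second, I would translate the paving condition on $M^{\vee}$ into a statement about hyperplanes of $M$. The circuits of $M^{\vee}$ are the cocircuits of $M$, which are the complements of hyperplanes of $M$. Since $\rk(M^{\vee}) = |\ssets{E}{M}| - 3$, requiring every circuit of $M^{\vee}$ to have size at least $|\ssets{E}{M}|-3$ is equivalent to requiring every hyperplane of $M$ to have size at most $3$. So sparse paving is equivalent to \enquote{$M$ is simple and every hyperplane of $M$ has at most $3$ elements.}

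The last step is to match this with the cyclic-hyperplane formulation. Assume $M$ is simple of rank $3$, so every hyperplane $H$ is a rank-$2$ flat with $|H| \geq 2$. I claim $H$ is cyclic if and only if $|H| \geq 3$: if $|H| = 2$, it contains no circuit (simple plus rank $3$ paving means all circuits have size at least $3$), so it cannot be a union of circuits; conversely, if $|H| \geq 3$, then any three elements of $H$ form a rank-$2$ dependent set with no proper dependent subset (by simplicity), hence a circuit of size $3$, and each element of $H$ lies in such a triple, so $H$ is a union of circuits. Combining this claim with the bound $|H| \leq 3$ established in the previous paragraph yields: under simplicity, \enquote{every hyperplane has at most $3$ elements} is equivalent to \enquote{every cyclic hyperplane has exactly $3$ elements} (noncyclic hyperplanes automatically have size $2$).

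I do not expect any real obstacle here; the only subtlety is making sure the equivalence \enquote{cyclic hyperplane $\iff$ hyperplane of size $\geq 3$} is valid, which is why I want to isolate the simple plus rank $3$ hypotheses before invoking it. The forward direction of the proposition is then: sparse paving gives $M$ simple (from $M$ paving) and all hyperplanes of size $\leq 3$ (from $M^{\vee}$ paving), and simplicity forces cyclic hyperplanes to have size at least $3$, leaving exactly $3$. The reverse direction is: $M$ simple makes $M$ paving directly, and \enquote{every cyclic hyperplane has exactly $3$ elements} combined with \enquote{every noncyclic hyperplane has exactly $2$ elements} (a consequence of simplicity) gives $|H| \leq 3$ for all hyperplanes, hence $M^{\vee}$ is paving.
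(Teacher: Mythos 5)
Your proof is correct and follows essentially the same route as the paper's: both arguments rest on the duality that $H$ is a hyperplane of $M$ exactly when $\ssets E M \setminus H$ is a circuit of $M^{\vee}$, combined with the rank count $\rk(M^{\vee}) = |\ssets E M| - 3$ and the observation that a rank-$3$ matroid is paving precisely when it is simple. If anything, you make explicit a point the paper leaves implicit in its converse direction, namely that in a simple rank-$3$ matroid a hyperplane is cyclic if and only if it has at least $3$ elements (so noncyclic hyperplanes have exactly $2$).
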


\begin{proof}
We use the following fact from \cite[Prop.~2.1.6]{Oxley}: a subset $H\subseteq \ssets E M$ is a hyperplane of $M$ if and only if $\ssets E M \setminus H$ is a circuit of $M^{\vee}$. 

Suppose $M$ is sparse paving. 
Because $M$ is paving, every subset of size at most $2$ is independent; in particular $M$ is a simple matroid. 
Let $H\in \cZ^{1}(M)$. Since $H$ contains a circuit, it must have at least $3$ elements.  
By the fact stated above, $C = \ssets E M \setminus H$ is a circuit of $M^{\vee}$. Furthermore, $C$ has $|\ssets E M|-|H| \leq |\ssets E M|-3$ elements. 
As $M^{\vee}$ is also paving and has rank $|\ssets E M|-3$, we have $|C| \geq |\ssets E M| - 3$. 
So $C = |\ssets E M| - 3$, and hence $|H| = 3$. 

Conversely, suppose that $M$ is simple and each cyclic hyperplane of $M$ has exactly 3 elements. Because $M$ is simple and its rank is $3$, it is paving. If $C$ is a circuit of $M^{\vee}$, then $H = \ssets E M \setminus C$ is a hyperplane of $M$. By hypothesis, $H$ has either $2$ or $3$ elements. This implies that $|C| \geq |\ssets E M| - 3$. Because the rank of $M^{\vee}$ is $|\ssets E M| - 3$, we see that $M^{\vee}$ is paving. 
\end{proof}
 
We define a new matroid $M_{S}$ from this data by describing its cyclic flats. The ground set of $M_{S}$ is 
\begin{equation*}
	E(M_{S}) = \ssets E M \times \{\pm 1\}. 
\end{equation*}

Let $\pi\colon E(M_{S}) \to \ssets E M$ be the projection to the variable $\pi(a,s) = a$. 
Set $\pZ(M_{S}) = \{ \emptyset, E(M_{S}) \} \cup \pZ^{1}(M_{S})$ where
\begin{equation*}
    \pZ^{1}(M_{S}) = \left\{ K \subset E(M_{S}) \suchthat |\pi(K)| = |K|,\, \pi(K) \in \pZ^{1}(M),\, \text{ and }  \prod_{(a,s)\in K} s = s_{\pi(K)} \right\}.
\end{equation*}
Define $r\colon \pZ(M_{S}) \to \N_{0}$ by 
\begin{equation*}
    r(X) = 
    \begin{cases}
    0 & \text{ if } X = \emptyset, \\
    2 & \text{ if } X\in \pZ^{1}(M_{S}),  \\
    3 & \text{ if } X = E(M_{S}).
    \end{cases}
\end{equation*}
We view $\pZ(M_{S})$ as a lattice whose join and meet operations are
    \begin{equation*}
        X \vee Y = 
        \begin{cases}
        X & \text{ if } Y\subset X, \\
        Y & \text{ if } X\subset Y, \\
        E(M_{S}) & \text{otherwise},
        \end{cases}
        \qquad \text{and} \qquad
        X\wedge Y = 
        \begin{cases}
        X & \text{ if } X\subset Y, \\
        Y & \text{ if } Y\subset X, \\
        \emptyset & \text{otherwise}.
        \end{cases}
    \end{equation*}
\begin{theorem}
\label{thm:matroid-from-BCS}
	The lattice $\pZ(M_{S})$ and the function $r \colon \pZ(M_{S}) \to \N_{0}$ define a matroid $M_{S}$ as in  \Cref{thm:cyclic-flat-axioms}. 
\end{theorem}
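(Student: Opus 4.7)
The plan is to verify directly the three conditions of \Cref{thm:cyclic-flat-axioms}, after first confirming that $\pZ(M_S)$ really is a lattice under inclusion with the join and meet as stated. The key observation that makes everything work out is that, since $M$ is rank $3$ sparse paving, every cyclic hyperplane of $M$ has exactly three elements (by \Cref{prop:sparse-paving}), and therefore every $K\in \pZ^1(M_S)$ has exactly three elements as well (because $\pi$ restricts to a bijection on $K$). In particular, any two elements of $\pZ^1(M_S)$ are either equal or incomparable, so the join/meet definitions are consistent and $\pZ(M_S)$ is a lattice.

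Condition (1) is immediate from the definition of $r$. For condition (2), I would split into the three possible proper containments among elements of $\pZ(M_S)$: $\emptyset \subsetneq K$ for $K\in \pZ^1(M_S)$ gives $r(Y)-r(X)=2$ and $|Y\setminus X|=3$; $\emptyset \subsetneq E(M_S)$ gives $3 < 2|\ssets E M|$ since $|\ssets E M|\geq 3$; and $K\subsetneq E(M_S)$ for $K\in\pZ^1(M_S)$ gives $r(Y)-r(X)=1<2|\ssets E M|-3$. All three inequalities hold.

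The only non-trivial instance of condition (3) is when $X=K_1$ and $Y=K_2$ are two distinct elements of $\pZ^1(M_S)$, since in all other configurations either one contains the other or one is $\emptyset$ or $E(M_S)$ and the inequality reduces to something already checked. In this non-trivial case $X\vee Y = E(M_S)$ and $X\wedge Y = \emptyset$, so the inequality becomes
\begin{equation*}
    4 = r(K_1) + r(K_2) \geq 3 + |K_1\cap K_2|,
\end{equation*}
which is equivalent to $|K_1\cap K_2|\leq 1$. This intersection bound is the main thing to prove, and I expect it to be the main obstacle.

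To establish $|K_1 \cap K_2|\leq 1$, I would split into two subcases based on whether $\pi(K_1)$ equals $\pi(K_2)$ or not. If $\pi(K_1)=\pi(K_2) = H$, then $K_1$ and $K_2$ are two distinct lifts of $H$ to $H\times\{\pm 1\}$; both satisfy $\prod_{(a,s)\in K_i} s = s_H$, so their sign patterns on $H$ differ in an even number of coordinates, hence in at least two coordinates, so $|K_1 \cap K_2|\leq 1$. If $\pi(K_1)\neq \pi(K_2)$, then because $\pi$ is injective on each $K_i$ we have $|K_1\cap K_2| \leq |\pi(K_1)\cap \pi(K_2)|$, so it suffices to prove that two distinct cyclic hyperplanes $H_1,H_2$ of $M$ intersect in at most one element. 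Supposing they shared two elements $a,b$, the pair $\{a,b\}$ would be independent (as $M$ is paving of rank $3$), so $\cl_M(\{a,b\})$ would be the unique rank-$2$ flat containing $\{a,b\}$; but both $H_1$ and $H_2$ are rank-$2$ flats containing $\{a,b\}$, forcing $H_1=H_2$, a contradiction. This completes the verification of condition (3) and hence the proof.
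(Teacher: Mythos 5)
Your proof is correct and follows the same route as the paper's: a direct verification of the three axioms of \Cref{thm:cyclic-flat-axioms}, with the only substantive point being the bound $|X\cap Y|\le 1$ for distinct $X,Y\in \pZ^{1}(M_{S})$. The paper merely asserts this bound from the fact that cyclic hyperplanes of $M$ have exactly three elements, whereas you supply the full case analysis (equal versus distinct images under $\pi$, using the parity of sign patterns in the first case and uniqueness of the rank-$2$ flat $\cl_{M}(\{a,b\})$ in the second), so your write-up is strictly more detailed than, but not different in approach from, the published proof.
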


\begin{proof}
This is a straightforward verification using \Cref{thm:cyclic-flat-axioms}. The most interesting case is the verification of the inequality in (3) for $X, Y\in \pZ^{1}(M_{S})$ with $X\neq Y$. In this case, $r(X) = r(Y) = 2$,  $r(X \vee Y) = 3$ and $r(X \wedge Y)=0$. Furthermore, $|X\cap Y|\leq 1$ because if $X\cap Y$ have 2 elements in common, they must be equal (this uses in an essential way the fact that each cyclic hyperplane of $M$ has exactly 3 elements). These sets satisfy the inequality in  (3), as required. 
\end{proof}

\subsection{Linear binary constraint system games}
\label{sec:LBCS+game}
As defined in \cite{CleveMittal}, one can associate a nonlocal game to every binary constraint system $(\pV, \pE)$. The game is played as follows (note that our game is a synchronous version of the games in \cite{CleveMittal}). The referee sends Alice and Bob equations $H_A$ and $H_B$ from the set $\pE$, respectively. They answer with $\{-1,1\}$--assignments to all variables in $H_A$ and $H_B$. They win the game if
\begin{itemize}
    \item[(i)] Their assignments are solutions to their equations,
    \item[(ii)] Their assignments agree on all variables that appear in both equations.
\end{itemize}

\noindent It is well-known \cite{Mermin}, that the nonlocal game associated to the magic square has a perfect quantum strategy, but no perfect classical strategy if $s_i=-1$ for an odd number of $i$ in \Cref{magicsquare}.

\subsection{Examples of quantum isomorphic matroids}

Let $M$ be a rank 3 sparse-paving matroid, $S = \left( s_{H} \suchthat H\in \cZ(M) \right)$ in $\{\pm 1\}^{\cZ^{1}(M)}$, and let $M_{S}$ be the matroid defined in \S~\ref{sec:matroids+BCS}. As a consequence of \Cref{prop:sparse-paving}, we have $\cZ^{1}(M) = \NB(M)$. So the set of nonbases of $M_{S}$ is given by
\begin{align*}
    \NB(M_{S}) = \left\{ \{(x_a,t_a), (x_b, t_b), (x_c, t_c)\} \suchthat \{x_a,x_b,x_c\}\in \NB(M),\ t_i\in\{\pm1\} \text{ and } t_at_bt_c=s_H \right\}, 
\end{align*}
i.e., a nonbasis consist of an equation of the linear system with fulfilling assignments. Note that by choosing different values for $s_H\in \{\pm 1\}$, we obtain (possibly) different matroids from $M$. If $s_H=1$ for all $H$, the linear system is \emph{homogeneous} and we call the associated matroid $M_{hom}$.
In the following, we write $(H,t)$ for the nonbasis of $M_{S}$ corresponding to $H\in\cZ(M)$ and $t\in\{\pm 1\}^H$.

\begin{theorem}
\label{thm:qisomorphic-matroid}
Consider a rank $3$ sparse paving matroid $M$, a collection $S = \left( s_{H} \suchthat H\in \cZ(M) \right)$ in $\{\pm 1\}^{\cZ^{1}(M)}$ and the associated linear system $(\pV, \pE)$ (see previous subsections). Assume there exists a perfect quantum strategy for the linear system game associated to $(\pV, \pE)$. Then $M_{hom}$ and $M_{S}$ are quantum isomorphic, i.e., the ($M_{hom}, M_{S}$, $\NB$)-isomorphism game has a perfect quantum strategy. 
\end{theorem}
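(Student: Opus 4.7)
The plan is to build projections satisfying the three conditions of \Cref{thm:perfectstrat} for the $(M_{hom}, M_{S}, \NB)$--isomorphism game directly out of the given perfect quantum commuting strategy for the synchronous LBCS game $(\pV, \pE)$. By \Cref{lem:synchronous}, the latter yields a $C^{*}$--algebra $A$, a faithful tracial state $\tau \colon A \to \C$, and for each equation $H \in \NB(M)$ a PVM $(P^{H}_{v})_{v}$ indexed by the assignments $v \colon H \to \{\pm 1\}$ with $\prod_{i \in H} v_{i} = s_{H}$. The standard faithful--trace argument upgrades the losing-condition trace identity $\tau(P^{H}_{v}P^{H'}_{v'}) = 0$ to the operator identity
\begin{equation*}
    P^{H}_{v} P^{H'}_{v'} = 0 \qquad \text{whenever some } c \in H \cap H' \text{ satisfies } v_{c} \neq v'_{c},
\end{equation*}
and in particular $P^{H}_{v}P^{H}_{v'} = \delta_{v,v'}P^{H}_{v}$.

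I then define, for $\fa = ((H, u), (a, u_{a})) \in \NB_{\bullet}(M_{hom})$ (so $\prod u = 1$) and $\fx = ((H', t), (b, t_{b})) \in \NB_{\bullet}(M_{S})$ (so $\prod t = s_{H'}$),
\begin{equation*}
    F_{\fa \fx} =
    \begin{cases}
        P^{H}_{u \cdot t} & \text{if } H = H' \text{ and } a = b, \\
        0 & \text{otherwise,}
    \end{cases}
\end{equation*}
where $(u \cdot t)_{i} = u_{i} t_{i}$. Since $\prod (u \cdot t) = 1 \cdot s_{H} = s_{H}$, the assignment $u \cdot t$ indexes a projection in the $H$--PVM, so each $F_{\fa \fx}$ is a projection in $A$. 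Conditions (1) and (2) of \Cref{thm:perfectstrat} follow because, for fixed $u$ with $\prod u = 1$, the pointwise product $t \mapsto u \cdot t$ is an involution on satisfying assignments: $\sum_{\fx} F_{\fa \fx} = \sum_{t \colon \prod t = s_{H}} P^{H}_{u \cdot t} = \sum_{v \colon \prod v = s_{H}} P^{H}_{v} = 1$, and the argument for (2) is symmetric.

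The main step is checking condition (3): $F_{\fa \fx} F_{\fb \fy} = 0$ whenever $\rel_{\NB(M_{hom})}(\fa, \fb) \neq \rel_{\NB(M_{S})}(\fx, \fy)$. Writing $\fb = ((H', u'), (a', u'_{a'}))$ and $\fy = ((H'', t'), (b', t'_{b'}))$, the $\delta$--supports of the definition immediately force the underlying equations of $\fa$ and $\fx$ to agree, and likewise for $\fb$ and $\fy$; otherwise the product vanishes. The analysis then splits according to whether these two underlying equations coincide. When they do, PVM orthogonality gives a nonzero product only if $u \cdot t = u' \cdot t'$, which enforces the coordinate-wise equivalence $u_{i} = u'_{i} \Leftrightarrow t_{i} = t'_{i}$; this immediately matches the two relation values. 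When the two underlying equations differ, both relations lie in $\{1, 3\}$, and for the shared point $a = b = a' = b'$ the LBCS consistency identity forces $u_{a} t_{a} = u'_{a'} t'_{a'}$ whenever the product is nonzero, which rules out precisely the sign configurations where the relation values would disagree. The main obstacle is bookkeeping the four sign patterns of $(u_{a}, u'_{a}, t_{a}, t'_{a})$ and matching them against the PVM orthogonality and LBCS consistency, but it is a finite verification with no hidden difficulty. Once done, \Cref{thm:perfectstrat} applied to $A$, the projections $F_{\fa \fx}$, and the tracial state $\tau$ yields the desired perfect quantum commuting strategy, so $M_{hom}$ and $M_{S}$ are quantum isomorphic.
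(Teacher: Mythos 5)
Your construction is correct and is essentially the paper's proof in operator-algebraic clothing: the projections $F_{\fa\fx}=P^{H}_{u\cdot t}$ (supported on $H=H'$ and $a=b$) are exactly the measurement operators of the derived strategy that the paper describes operationally, namely ``play the LBCS game on the underlying equations and multiply the received assignment entrywise into the input,'' and your verification of condition (3) via PVM orthogonality and LBCS consistency mirrors the paper's four-case check of the relation values. The only difference is presentational: you route the argument through \Cref{thm:perfectstrat} rather than describing the strategy at the level of players and answers.
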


\begin{proof}
The players Alice and Bob use the following strategy in the nonbasis isomorphism game: Upon receiving pointed nonbases $((H_A, t_A), (x, a))$ and $((H_B, t_B), (y,b))$, respectively, they play the linear system game (see \S\ref{sec:LBCS+game}) with questions $H_A$ and $H_B$ using the perfect quantum strategy. Since it is a perfect quantum strategy, they receive fulfilling assignments $k_A$ and $k_B$ to $H_A$ and $H_B$  which agree on the common variables in $H_A$ and $H_B$. The players then answer $((H_A, t_A*k_A), (x, ak))$ and $((H_B, t_B*k_B),(y,bl))$ in the isomorphism game, where $*$ denotes the entry-wise product and $k$ and $l$ are the assignments to $x$ and $y$ in $k_A$ and $k_B$. Note that since $t_A$ and $t_B$ are fulfilling assignments in the homogeneous system, we get that $t_A*k_A$ and $t_B*k_B$ fulfilling assignments for the LBCS associated to $S$, i.e., $(H_A, t_A*k_A)$ and $(H_B, t_B*k_B)$ are nonbasis in $M_{S}$. 

It remains to check that the relations are preserved in this strategy:

\medskip 
	
\noindent \textbf{Case 1:} $\rel(((H_A, t_A), (x, a)),((H_B, t_B), (y,b)))=0$. It is clear that the same is true for their answers, since the assignments they get agree on the common variables, which are all variables since $H_A=H_B$.
    
\medskip 
	
\noindent \textbf{Case 2:} $\rel(((H_A, t_A), (x, a)),((H_B, t_B), (y,b)))=1$. We have $(x,a)=(y,b)$ and since the answers agree on common variables, it holds $k=l$ and therefore $(x,ka)=(y,lb)$. We furthermore have $(H_A, t_A*k_A)\neq (H_B, t_B*k_B)$: either it holds $H_A\neq H_B$ or if $H_A=H_B$, then we have $t_A \neq t_B$ by assumption, and since we have $k_A=k_B$ it holds $t_A*k_A\neq t_B*k_B$.
    
\medskip 
	
\noindent \textbf{Case 3:} $\rel(((H_A, t_A), (x, a)), ((H_B, t_B), (y,b)))=2$. Since common variables get multiplied by the same number, we get that $(x, ak)\in (H_B, t_B*k_B)$ and $(y,bl)\in(H_A, t_A)$.
    
\medskip 
	
\noindent \textbf{Case 4:} $\rel(((H_A, t_A), (x, a)),((H_B, t_B), (y,b)))=3$. It suffices to check that if $(x, a)\notin (H_B, t_B)$, then $(x, ka)\notin (H_B, t_B*k_B)$ (and similarly for $(y,b)$). This is clear if $x\notin H_B$. If $x\in H_B$, note that assignments of common variables of $H_A$ and $H_B$ get multiplied by the same number. So if the assignments were different before, they are different after multiplying both with $k$. 
\end{proof}

We apply this theorem to find two matroids that are not isomorphic but are $\NB_{\bullet}$--quantum isomorphic. For the rest of this section, let $M$ be the matroid from   \Cref{ex:magic-square-matroid}, recall 
  \begin{equation*}
  \NB(M) = \{123,147,258,369,456,789 \} = \cZ^{1}(M).
  \end{equation*}
(As in that example, we write, e.g., $123$ for $\{1,2,3\}$.)
Define $S = (s_{H} \suchthat H\in \cZ^{1}(M))$ by $s_{789}=-1$ and $s_{H}=1$ for $H\in \cZ^{1}(M) \setminus \{789\}$. 
Let $P = M_{hom}$ and $Q = M_{S}$. 
These matroids have the same ground set $E$ where
\begin{equation*}
    E = \{(a,+1) \suchthat a\in [9] \} \cup \{(a,-1) \suchthat a\in [9] \},
\end{equation*}
and they each have exactly $24$ nonbases. 
Explicitly, the nonbases of $P$ are of the form 
\begin{align*}
    &\{ (a,+1), (b,+1), (c,+1) \}, \
    \{ (a,+1), (b,-1), (c,-1) \}, \\ 
    &\{ (a,-1), (b,+1), (c,-1) \}, \ 
    \{ (a,-1), (b,-1), (c,+1) \},
\end{align*}
where $\{a,b,c\} \in \NB(M)$. The nonbases of $Q$ consist of those listed above for $\{a,b,c\} \in \NB(M)$ except for $\{7,8,9\}$. Instead, $Q$ has as nonbases
\begin{align*}
    \{ (7,+1), (8,+1), (9,-1) \}, \
    &\{ (7,+1), (8,-1), (9,+1) \}, \\ 
    \{ (7,-1), (8,+1), (9,+1) \}, \
    &\{ (7,-1), (8,-1), (9,-1) \}.
\end{align*}
The relation colored graphs of these matroids are illustrated in \Cref{fig:matroids-PQ}.

\begin{figure}
  \centering
  \begin{minipage}{0.45\textwidth}
    \centering

\begin{tikzpicture}[x  = {(1cm,0cm)},
                    y  = {(0cm,1cm)},
                    z  = {(0cm,0cm)},
                    scale = 0.07,
                    color = {lightgray}]

  \coordinate (v0_unnamed__1) at (31.3775, 0);
  \coordinate (v1_unnamed__1) at (35.8775, -2.59808);
  \coordinate (v2_unnamed__1) at (35.8775, 2.59808);
  \coordinate (v3_unnamed__1) at (30.3083, 8.12109);
  \coordinate (v4_unnamed__1) at (35.3274, 6.77622);
  \coordinate (v5_unnamed__1) at (33.9825, 11.7953);
  \coordinate (v6_unnamed__1) at (27.1737, 15.6887);
  \coordinate (v7_unnamed__1) at (32.3698, 15.6887);
  \coordinate (v8_unnamed__1) at (29.7718, 20.1887);
  \coordinate (v9_unnamed__1) at (22.1872, 22.1872);
  \coordinate (v10_unnamed__1) at (27.2063, 23.5321);
  \coordinate (v11_unnamed__1) at (23.5321, 27.2063);
  \coordinate (v12_unnamed__1) at (15.6887, 27.1737);
  \coordinate (v13_unnamed__1) at (20.1887, 29.7718);
  \coordinate (v14_unnamed__1) at (15.6887, 32.3698);
  \coordinate (v15_unnamed__1) at (8.12109, 30.3083);
  \coordinate (v16_unnamed__1) at (11.7953, 33.9825);
  \coordinate (v17_unnamed__1) at (6.77622, 35.3274);
  \coordinate (v18_unnamed__1) at (0, 31.3775);
  \coordinate (v19_unnamed__1) at (2.59808, 35.8775);
  \coordinate (v20_unnamed__1) at (-2.59808, 35.8775);
  \coordinate (v21_unnamed__1) at (-8.12109, 30.3083);
  \coordinate (v22_unnamed__1) at (-6.77622, 35.3274);
  \coordinate (v23_unnamed__1) at (-11.7953, 33.9825);
  \coordinate (v24_unnamed__1) at (-15.6887, 27.1737);
  \coordinate (v25_unnamed__1) at (-15.6887, 32.3698);
  \coordinate (v26_unnamed__1) at (-20.1887, 29.7718);
  \coordinate (v27_unnamed__1) at (-22.1872, 22.1872);
  \coordinate (v28_unnamed__1) at (-23.5321, 27.2063);
  \coordinate (v29_unnamed__1) at (-27.2063, 23.5321);
  \coordinate (v30_unnamed__1) at (-27.1737, 15.6887);
  \coordinate (v31_unnamed__1) at (-29.7718, 20.1887);
  \coordinate (v32_unnamed__1) at (-32.3698, 15.6887);
  \coordinate (v33_unnamed__1) at (-30.3083, 8.12109);
  \coordinate (v34_unnamed__1) at (-33.9825, 11.7953);
  \coordinate (v35_unnamed__1) at (-35.3274, 6.77622);
  \coordinate (v36_unnamed__1) at (-31.3775, 0);
  \coordinate (v37_unnamed__1) at (-35.8775, 2.59808);
  \coordinate (v38_unnamed__1) at (-35.8775, -2.59808);
  \coordinate (v39_unnamed__1) at (-30.3083, -8.12109);
  \coordinate (v40_unnamed__1) at (-35.3274, -6.77622);
  \coordinate (v41_unnamed__1) at (-33.9825, -11.7953);
  \coordinate (v42_unnamed__1) at (-27.1737, -15.6887);
  \coordinate (v43_unnamed__1) at (-32.3698, -15.6887);
  \coordinate (v44_unnamed__1) at (-29.7718, -20.1887);
  \coordinate (v45_unnamed__1) at (-22.1872, -22.1872);
  \coordinate (v46_unnamed__1) at (-27.2063, -23.5321);
  \coordinate (v47_unnamed__1) at (-23.5321, -27.2063);
  \coordinate (v48_unnamed__1) at (-15.6887, -27.1737);
  \coordinate (v49_unnamed__1) at (-20.1887, -29.7718);
  \coordinate (v50_unnamed__1) at (-15.6887, -32.3698);
  \coordinate (v51_unnamed__1) at (-8.12109, -30.3083);
  \coordinate (v52_unnamed__1) at (-11.7953, -33.9825);
  \coordinate (v53_unnamed__1) at (-6.77622, -35.3274);
  \coordinate (v54_unnamed__1) at (0, -31.3775);
  \coordinate (v55_unnamed__1) at (-2.59808, -35.8775);
  \coordinate (v56_unnamed__1) at (2.59808, -35.8775);
  \coordinate (v57_unnamed__1) at (8.12109, -30.3083);
  \coordinate (v58_unnamed__1) at (6.77622, -35.3274);
  \coordinate (v59_unnamed__1) at (11.7953, -33.9825);
  \coordinate (v60_unnamed__1) at (15.6887, -27.1737);
  \coordinate (v61_unnamed__1) at (15.6887, -32.3698);
  \coordinate (v62_unnamed__1) at (20.1887, -29.7718);
  \coordinate (v63_unnamed__1) at (22.1872, -22.1872);
  \coordinate (v64_unnamed__1) at (23.5321, -27.2063);
  \coordinate (v65_unnamed__1) at (27.2063, -23.5321);
  \coordinate (v66_unnamed__1) at (27.1737, -15.6887);
  \coordinate (v67_unnamed__1) at (29.7718, -20.1887);
  \coordinate (v68_unnamed__1) at (32.3698, -15.6887);
  \coordinate (v69_unnamed__1) at (30.3083, -8.12109);
  \coordinate (v70_unnamed__1) at (33.9825, -11.7953);
  \coordinate (v71_unnamed__1) at (35.3274, -6.77622);

  \definecolor{blackish}{HTML}{001514}

  \tikzstyle{vertexstyle_unnamed__1_0} = [circle, scale=0.25, fill=blackish,label={[text=black, above right, align=left]},]

  \definecolor{orangy}{HTML}{F9A03F}
  \definecolor{greenish}{HTML}{3b754c}
  \tikzstyle{edgestyle_unnamed__1} = [thin,color=black]

  \definecolor{edgecolor_unnamed__2}{HTML}{001514}
  \tikzstyle{edgestyle_unnamed__2} = [thick,color=blackish]


  \foreach \i/\k in {1/0,2/0,2/1,4/3,5/3,5/4,7/6,8/6,8/7,10/9,11/9,11/10,13/12,14/12,14/13,16/15,17/15,17/16,19/18,20/18,20/19,22/21,23/21,23/22,25/24,26/24,26/25,28/27,29/27,29/28,31/30,32/30,32/31,34/33,35/33,35/34,37/36,38/36,38/37,40/39,41/39,41/40,43/42,44/42,44/43,46/45,47/45,47/46,49/48,50/48,50/49,52/51,53/51,53/52,55/54,56/54,56/55,58/57,59/57,59/58,61/60,62/60,62/61,64/63,65/63,65/64,67/66,68/66,68/67,70/69,71/69,71/70} {
   \draw[edgestyle_unnamed__2] (v\i_unnamed__1) -- (v\k_unnamed__1);
  }


  \foreach \i/\k in {5/2,6/3,8/1,9/4,10/7,11/0,17/14,18/15,20/13,21/16,22/19,23/12,29/26,30/27,32/25,33/28,34/31,35/24,36/0,36/11,37/12,37/23,38/24,38/35,39/3,39/6,40/15,40/18,41/24,41/35,41/38,42/3,42/6,42/39,43/27,43/30,44/12,44/23,44/37,45/15,45/18,45/40,46/27,46/30,46/43,47/0,47/11,47/36,48/1,48/8,49/13,49/20,50/25,50/32,51/4,51/9,52/16,52/21,53/25,53/32,53/50,54/4,54/9,54/51,55/28,55/33,56/13,56/20,56/49,57/16,57/21,57/52,58/28,58/33,58/55,59/1,59/8,59/48,60/7,60/10,61/14,61/17,62/26,62/29,63/19,63/22,64/2,64/5,65/26,65/29,65/62,66/31,66/34,67/2,67/5,67/64,68/14,68/17,68/61,69/7,69/10,69/60,70/19,70/22,70/63,71/31,71/34,71/66} {
   \draw[edgestyle_unnamed__1] (v\i_unnamed__1) -- (v\k_unnamed__1);
  }

  \foreach \i in {0,1,2,3,4,5,6,7,8,9,10,11,12,13,14,15,16,17,18,19,20,21,22,23,24,25,26,27,28,29,30,31,32,33,34,35,36,37,38,39,40,41,42,43,44,45,46,47,48,49,50,51,52,53,54,55,56,57,58,59,60,61,62,63,64,65,66,67,68,69,70,71} {
    \node at (v\i_unnamed__1) [vertexstyle_unnamed__1_0] {};
  }

\end{tikzpicture}
  \end{minipage}%
  \hspace{0.05\textwidth}
  \begin{minipage}{0.45\textwidth}
    \centering

\begin{tikzpicture}[x  = {(1cm,0cm)},
                    y  = {(0cm,1cm)},
                    z  = {(0cm,0cm)},
                    scale = 0.07,
                    color = {lightgray}]

  \coordinate (v0_unnamed__1) at (31.3775, 0);
  \coordinate (v1_unnamed__1) at (35.8775, -2.59808);
  \coordinate (v2_unnamed__1) at (35.8775, 2.59808);
  \coordinate (v3_unnamed__1) at (30.3083, 8.12109);
  \coordinate (v4_unnamed__1) at (35.3274, 6.77622);
  \coordinate (v5_unnamed__1) at (33.9825, 11.7953);
  \coordinate (v6_unnamed__1) at (27.1737, 15.6887);
  \coordinate (v7_unnamed__1) at (32.3698, 15.6887);
  \coordinate (v8_unnamed__1) at (29.7718, 20.1887);
  \coordinate (v9_unnamed__1) at (22.1872, 22.1872);
  \coordinate (v10_unnamed__1) at (27.2063, 23.5321);
  \coordinate (v11_unnamed__1) at (23.5321, 27.2063);
  \coordinate (v12_unnamed__1) at (15.6887, 27.1737);
  \coordinate (v13_unnamed__1) at (20.1887, 29.7718);
  \coordinate (v14_unnamed__1) at (15.6887, 32.3698);
  \coordinate (v15_unnamed__1) at (8.12109, 30.3083);
  \coordinate (v16_unnamed__1) at (11.7953, 33.9825);
  \coordinate (v17_unnamed__1) at (6.77622, 35.3274);
  \coordinate (v18_unnamed__1) at (0, 31.3775);
  \coordinate (v19_unnamed__1) at (2.59808, 35.8775);
  \coordinate (v20_unnamed__1) at (-2.59808, 35.8775);
  \coordinate (v21_unnamed__1) at (-8.12109, 30.3083);
  \coordinate (v22_unnamed__1) at (-6.77622, 35.3274);
  \coordinate (v23_unnamed__1) at (-11.7953, 33.9825);
  \coordinate (v24_unnamed__1) at (-15.6887, 27.1737);
  \coordinate (v25_unnamed__1) at (-15.6887, 32.3698);
  \coordinate (v26_unnamed__1) at (-20.1887, 29.7718);
  \coordinate (v27_unnamed__1) at (-22.1872, 22.1872);
  \coordinate (v28_unnamed__1) at (-23.5321, 27.2063);
  \coordinate (v29_unnamed__1) at (-27.2063, 23.5321);
  \coordinate (v30_unnamed__1) at (-27.1737, 15.6887);
  \coordinate (v31_unnamed__1) at (-29.7718, 20.1887);
  \coordinate (v32_unnamed__1) at (-32.3698, 15.6887);
  \coordinate (v33_unnamed__1) at (-30.3083, 8.12109);
  \coordinate (v34_unnamed__1) at (-33.9825, 11.7953);
  \coordinate (v35_unnamed__1) at (-35.3274, 6.77622);
  \coordinate (v36_unnamed__1) at (-31.3775, 0);
  \coordinate (v37_unnamed__1) at (-35.8775, 2.59808);
  \coordinate (v38_unnamed__1) at (-35.8775, -2.59808);
  \coordinate (v39_unnamed__1) at (-30.3083, -8.12109);
  \coordinate (v40_unnamed__1) at (-35.3274, -6.77622);
  \coordinate (v41_unnamed__1) at (-33.9825, -11.7953);
  \coordinate (v42_unnamed__1) at (-27.1737, -15.6887);
  \coordinate (v43_unnamed__1) at (-32.3698, -15.6887);
  \coordinate (v44_unnamed__1) at (-29.7718, -20.1887);
  \coordinate (v45_unnamed__1) at (-22.1872, -22.1872);
  \coordinate (v46_unnamed__1) at (-27.2063, -23.5321);
  \coordinate (v47_unnamed__1) at (-23.5321, -27.2063);
  \coordinate (v48_unnamed__1) at (-15.6887, -27.1737);
  \coordinate (v49_unnamed__1) at (-20.1887, -29.7718);
  \coordinate (v50_unnamed__1) at (-15.6887, -32.3698);
  \coordinate (v51_unnamed__1) at (-8.12109, -30.3083);
  \coordinate (v52_unnamed__1) at (-11.7953, -33.9825);
  \coordinate (v53_unnamed__1) at (-6.77622, -35.3274);
  \coordinate (v54_unnamed__1) at (0, -31.3775);
  \coordinate (v55_unnamed__1) at (-2.59808, -35.8775);
  \coordinate (v56_unnamed__1) at (2.59808, -35.8775);
  \coordinate (v57_unnamed__1) at (8.12109, -30.3083);
  \coordinate (v58_unnamed__1) at (6.77622, -35.3274);
  \coordinate (v59_unnamed__1) at (11.7953, -33.9825);
  \coordinate (v60_unnamed__1) at (15.6887, -27.1737);
  \coordinate (v61_unnamed__1) at (15.6887, -32.3698);
  \coordinate (v62_unnamed__1) at (20.1887, -29.7718);
  \coordinate (v63_unnamed__1) at (22.1872, -22.1872);
  \coordinate (v64_unnamed__1) at (23.5321, -27.2063);
  \coordinate (v65_unnamed__1) at (27.2063, -23.5321);
  \coordinate (v66_unnamed__1) at (27.1737, -15.6887);
  \coordinate (v67_unnamed__1) at (29.7718, -20.1887);
  \coordinate (v68_unnamed__1) at (32.3698, -15.6887);
  \coordinate (v69_unnamed__1) at (30.3083, -8.12109);
  \coordinate (v70_unnamed__1) at (33.9825, -11.7953);
  \coordinate (v71_unnamed__1) at (35.3274, -6.77622);

  \definecolor{blackish}{HTML}{001514}

  \tikzstyle{vertexstyle_unnamed__1_0} = [circle, scale=0.25, fill=blackish,label={[text=black, above right, align=left]},]

  \definecolor{orangy}{HTML}{F9A03F}
  \definecolor{greenish}{HTML}{3b754c}
  \tikzstyle{edgestyle_unnamed__1} = [thin,color=black]

  \definecolor{edgecolor_unnamed__2}{HTML}{001514}
  \tikzstyle{edgestyle_unnamed__2} = [thick,color=blackish]


  \foreach \i/\k in {5/2,6/3,8/1,9/4,10/7,11/0,17/14,18/15,20/13,21/16,22/19,23/12,29/26,30/27,32/25,33/28,34/31,35/24,36/0,36/11,37/12,37/23,38/24,38/35,39/3,39/6,40/15,40/18,41/24,41/35,41/38,42/3,42/6,42/39,43/27,43/30,44/12,44/23,44/37,45/15,45/18,45/40,46/27,46/30,46/43,47/0,47/11,47/36,48/1,48/8,49/13,49/20,50/25,50/32,51/4,51/9,52/16,52/21,53/25,53/32,53/50,54/4,54/9,54/51,55/28,55/33,56/13,56/20,56/49,57/16,57/21,57/52,58/28,58/33,58/55,59/1,59/8,59/48,60/2,60/5,61/14,61/17,62/26,62/29,63/7,63/10,64/19,64/22,65/26,65/29,65/62,66/7,66/10,66/63,67/31,67/34,68/14,68/17,68/61,69/19,69/22,69/64,70/31,70/34,70/67,71/2,71/5,71/60} {
   \draw[edgestyle_unnamed__1] (v\i_unnamed__1) -- (v\k_unnamed__1);
  }


  \foreach \i/\k in {1/0,2/0,2/1,4/3,5/3,5/4,7/6,8/6,8/7,10/9,11/9,11/10,13/12,14/12,14/13,16/15,17/15,17/16,19/18,20/18,20/19,22/21,23/21,23/22,25/24,26/24,26/25,28/27,29/27,29/28,31/30,32/30,32/31,34/33,35/33,35/34,37/36,38/36,38/37,40/39,41/39,41/40,43/42,44/42,44/43,46/45,47/45,47/46,49/48,50/48,50/49,52/51,53/51,53/52,55/54,56/54,56/55,58/57,59/57,59/58,61/60,62/60,62/61,64/63,65/63,65/64,67/66,68/66,68/67,70/69,71/69,71/70} {
   \draw[edgestyle_unnamed__2] (v\i_unnamed__1) -- (v\k_unnamed__1);
  }

  \foreach \i in {0,1,2,3,4,5,6,7,8,9,10,11,12,13,14,15,16,17,18,19,20,21,22,23,24,25,26,27,28,29,30,31,32,33,34,35,36,37,38,39,40,41,42,43,44,45,46,47,48,49,50,51,52,53,54,55,56,57,58,59,60,61,62,63,64,65,66,67,68,69,70,71} {
    \node at (v\i_unnamed__1) [vertexstyle_unnamed__1_0] {};
  }

\end{tikzpicture}
  \end{minipage}
  \caption{The relation graphs for $P$ and $Q$, the two quantum isomorphic but not isomorphic matroids.} \label{fig:matroids-PQ}
\end{figure}
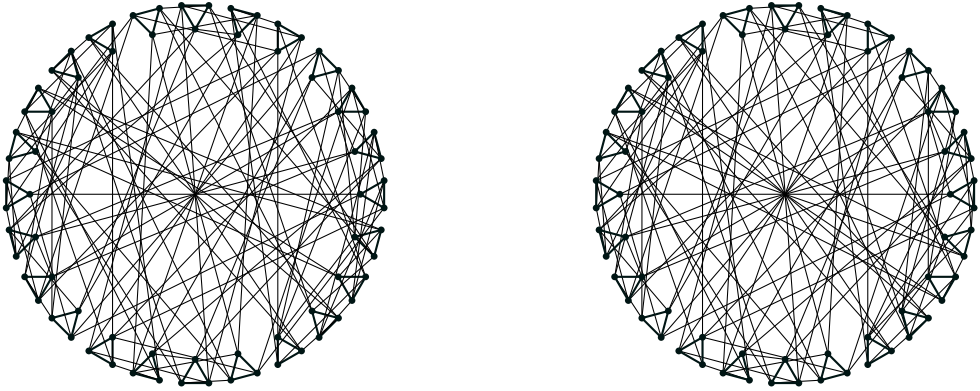
  
  \begin{theorem}
  \label{thm:noniso-quantum-ism}
      The matroids $P$ and $Q$ are $\NB_{\bullet}$--quantum isomorphic but not isomorphic.
  \end{theorem}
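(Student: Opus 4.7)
My plan is to handle the two claims separately. For the quantum isomorphism, I would apply \Cref{thm:qisomorphic-matroid} directly. Indeed, the linear binary constraint system associated to the pair $(M,S)$ is precisely the Mermin--Peres magic square system with $s_{789}=-1$ and $s_H=+1$ for the other five equations, and this system is the canonical example of an LBCS admitting a perfect quantum strategy \cite{Mermin} while having no classical solution. Invoking \Cref{thm:qisomorphic-matroid} with $M_{hom}=P$ and $M_S=Q$ gives that $P$ and $Q$ are $\NB_\bullet$-quantum isomorphic.

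For the nonisomorphism claim, I would assume for contradiction that $\varphi\colon P\to Q$ is a matroid isomorphism and push it toward a parity violation coming from the magic square. The first step is to give a matroidal characterization of the nine \emph{fibers} $\{(a,+1),(a,-1)\}$ for $a\in[9]$. For $R\in\{P,Q\}$ and $u\in \ssets{E}{R}$, define the invariant
\begin{equation*}
L(u) := \bigcup_{K\in\NB(R),\,u\in K}\bigl(K\setminus\{u\}\bigr).
\end{equation*}
A direct inspection of the nonbasis lists shows $L((a,t))=N_a\times\{\pm 1\}$ in both $P$ and $Q$, where $N_a=\{b\neq a:\{a,b\}\subset H\text{ for some }H\in\NB(M)\}$. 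Since this description does not involve the sign $t$, $L$ is constant on each fiber. Conversely, a short case-check over the six nonbases $123,147,258,369,456,789$ shows that the nine neighborhoods $N_1,\ldots,N_9$ are pairwise distinct, so $L(u)=L(v)$ forces $u$ and $v$ to lie in a common fiber.

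Since $\varphi$ preserves $L$, it sends fibers of $P$ bijectively to fibers of $Q$; consequently $\varphi$ descends to a bijection $\sigma\colon[9]\to[9]$, and on each fiber it permutes the two sign options, so $\varphi((a,t))=(\sigma(a),\epsilon_a t)$ for some $\epsilon_a\in\{\pm 1\}$. Because $\varphi$ carries nonbases of $P$ to nonbases of $Q$, the induced $\sigma$ must permute $\NB(M)$ and thus lies in $\Aut(M)$. Moreover, a nonbasis of $P$ above $H=\{a,b,c\}$ has sign product $+1$, while its $\varphi$-image lies above $\sigma(H)$ and therefore must have sign product $s_{\sigma(H)}$; comparing gives $\epsilon_a\epsilon_b\epsilon_c=s_{\sigma(H)}$ for each $H\in\NB(M)$.

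Taking the product of these six equations over $H\in\NB(M)$ yields the contradiction. The left-hand side equals $\prod_{a\in[9]}\epsilon_a^{2}=1$, because every $a\in[9]$ lies in exactly two nonbases of $M$; the right-hand side equals $\prod_{H'\in\NB(M)}s_{H'}=-1$, because $\sigma$ permutes $\NB(M)$ and $S$ assigns $-1$ to exactly one nonbasis. The main obstacle I anticipate is the matroidal isolation of the fibers via $L$; once the pairwise distinctness of the $N_a$'s is verified, the rest of the argument is essentially the standard Mermin--Peres parity obstruction transported to the matroid level.
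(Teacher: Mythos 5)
Your proof of the quantum isomorphism half is the same as the paper's: both invoke \Cref{thm:qisomorphic-matroid} together with the existence of a perfect quantum strategy for the Mermin--Peres magic square with an odd number of $-1$'s. For the nonisomorphism half you take a genuinely different, and correct, route. The paper exhibits a distinguishing restriction: it finds a $9$-element subset $Y \subset E(Q)$ with $Q|Y$ isomorphic to the rank-$3$ matroid $N$ with $\NB(N) = \{123,456,789\}$, and then shows by a case analysis (first ruling out that a suitable $X \subseteq E(P)$ can contain both $(a,+1)$ and $(a,-1)$, then deriving the parity contradiction $x_1\cdots x_9 = 1 = -1$) that no restriction of $P$ is isomorphic to $N$. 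You instead assume an isomorphism $\varphi\colon P \to Q$ exists globally, show via the invariant $L(u)$ that $\varphi$ must preserve the fibers $\{(a,+1),(a,-1)\}$ (your verification that $L((a,t)) = N_a\times\{\pm 1\}$ and that the $N_a$ are pairwise distinct checks out), descend to a permutation $\sigma$ of $\NB(M)$, and run the parity count $\prod_a \epsilon_a^2 = 1 \neq -1 = \prod_{H}s_{H}$ using that each ground set element of $M$ lies in exactly two nonbases. Both arguments bottom out in the same classical unsatisfiability of the magic square; yours avoids the paper's case analysis on which signed lifts can coexist in a $9$-element subset and is arguably more conceptual, while the paper's version produces an explicit minor-theoretic certificate ($N$ is a restriction of $Q$ but not of $P$) that distinguishes the two matroids by a standard matroid invariant.
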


Before proving this theorem, we point out the numerous similarities between the matroids $P$ and $Q$ despite being nonisomorphic. These two matroids have the same rank and ground-set size (cf. \Cref{thm:gs+same+size}). They have the same number of bases, independent sets, circuits, flats, and hyperplanes. They both have connectivity equal to $3$. They have the same Tutte polynomial, and therefore the same characteristic polynomial. Their automorphism groups are both isomorphic to a semidirect product $(S_{4} \times S_{4}) \rtimes (\Z/2\Z)$.  
These may be verified using any software that has a matroids package, we used OSCAR \cite{OSCAR-book, Oscar}. Because these properties are not critical to our main results, we omit the proofs. 
  
\begin{proof}[Proof of \Cref{thm:noniso-quantum-ism}]
  The matroids $P$ and $Q$ are $\NB_{\bullet}$--quantum isomorphic by \Cref{thm:qisomorphic-matroid} and the fact that the LBCS used to define $Q$ has a perfect quantum strategy. 

Next, we show that $P$ and $Q$ are not isomorphic by exhibiting a matroid that is a minor of one but not the other. 
    Let $N$ be the rank $3$ matroid on $\{1,\ldots,9\}$ whose nonbases are
    \begin{equation*}
        \NB(N) = \{123, 456, 789\}. 
    \end{equation*}
    We claim that there is a restriction of $Q$ that is isomorphic to  $N$ but no restriction of $P$ is isomorphic to $N$. 
    Let $Y\subset E(Q)$ be the subset
    \begin{equation*}
        Y = \{(1,+1), (2,+1), (3,+1), (4,+1), (5,+1), (6,+1), (7,-1), (8,-1), (9,-1)\}
    \end{equation*}
    The matroid $Q|Y$ is isomorphic to $N$ as the $3$ nonbases of $Q|Y$ are $\{(1,+1), (2,+1), (3,+1)\}$, $\{(4,+1), (5,+1), (6,+1)\}$, and $\{(7,-1), (8,-1), (9,-1)\}$. 

    Suppose by way of contradiction that $X$ is a $9$-element subset of $E(P)$ so that $P|X$ is isomorphic to $N$; label the nonbases of this matroid by $H_1$, $H_2$, and $H_3$. Consider the case where, e.g., $(1,+1)$ and $(1,-1)$ are both in $X$, say $(1,+1) \in H_1$ and $(1,-1) \in H_2$. Note that $\pi(H_1)$ cannot equal $\pi(H_2)$. Assume without loss of generality that $\pi(H_1)=123$ and $\pi(H_2) = 147$.  The set $\pi(H_3)$ meets $123$ or $147$ and does not contain $1$. Without loss of generality, suppose $2\in \pi(H_3)$, so $\pi(H_3) = 258$. This means that both $(2,+1)$ and $(2,-1)$ are in $X$. If $H_1 = \{(1,+1), (2,+1), (3,+1)\}$, then $\{(1,-1), (2,-1), (3,+1)\}$ is a 4th nonbasis, and if  $H_1 = \{(1,+1), (2,-1), (3,-1)\}$, then $\{(1,-1), (2,-1), (3,+1)\}$ is a 4th nonbasis. In any case, $X$ cannot have both $(a,+1)$ and $(a, -1)$ for any $a \in \{1,\ldots,9\}$. 

    By the previous paragraph, we conclude that $\pi(X) = \{1,\ldots, 9\}$. Because of this and the fact that the nonbases of $P|X$ are pairwise disjoint, either $\{\pi(H_1), \pi(H_2), \pi(H_3)\}$ equals $\{123, 345, 789\}$ or $\{147,258,369\}$; without loss of generality suppose $\pi(H_1)=123$, $\pi(H_2)=258$, and $\pi(H_3)=789$.   The assignments of values $x_1, \ldots, x_9$ must satisfy 
    \begin{equation*}
        x_1 x_2 x_3 = x_4 x_5 x_6 = x_7 x_8 x_9 = 1 \quad \text{and} \quad x_1 x_4 x_7 = x_2 x_5 x_8 = x_3 x_6 x_9 = -1.
    \end{equation*}
    In the first case, the product $x_1 \cdots x_9$ equals $1$ but in the second case $x_1 \cdots x_9$ equals  $-1$, which is not possible.  So there is no restriction of $P$ that is isomorphic to $N$, as required. 
  \end{proof}

\section{The isomorphism algebra}
\label{sec:algebra}
In this section we give a purely algebraic characterization of when two matroids are $\pS$--quantum isomorphic. We do this by defining a $\ast$-algebra $\QMat M N \pS$ which is nonzero if and only if the $(M,N,\pS)$--isomorphism game has a perfect quantum commuting strategy exists, cf.   \Cref{thm:algebraic-quantum-iso}. We also gather some basic matroid properties that are shared by $\pS$--quantum isomorphic matroids. 

\subsection{Isomorphism algebras} 
Let $E$ and $F$ be finite sets. Define the noncommutative polynomial ring in $|E|\times |F|$ variables
\begin{equation}
\label{eq:CEF}
 \C \langle E, F \rangle = \C\langle u_{ax} \suchthat a\in E, x\in F \rangle.
\end{equation}
Equip this ring with a $\ast$-involution induced by $u_{ax}^{\ast} = u_{ax}$. The \textit{quantum isomorphism $\ast$-algebra} is 
\begin{equation}
\label{eq:fSEF}
    \fS(E,F) = \C\langle E, F\rangle / I
\end{equation}
    where $I\subset \C\langle E, F\rangle$ is the self-adjoint (two-sided) ideal generated by the forms
\begin{gather*}
u_{ax}^2 - u_{ax}, \quad 
u_{ax}u_{ay} \ (x\neq y), \quad 
u_{ax}u_{bx} \ (a\neq b), \quad 
1-\sum_{a\in E} u_{ax}, \quad  
1-\sum_{x\in F} u_{ax}.
\end{gather*}
The matrix of variables $[u_{ax}]$ satisfying the above relations is called a \textit{magic unitary}. 
Observe that if $E$ and $F$ have different sizes, then $\fS(E,F)$ is 0, so all nonzero magic unitaries must be square. This fact is well established, see, e.g., footnote 4 in \cite{Weber}. 

When $E=F$, we have that $\fS(E,F)$ is naturally identified with the quantum permutation algebra, a $\ast$-algebra version of Wang's quantum symmetric group \cite{Wang:1999}.  
By a straightforward generalization of the coproduct on the quantum permutation algebra, we may define the \textit{cocomposition}, i.e., a unital $\ast$-homomorphism defined by
\begin{equation}
\label{eq:cocomp-sym}
    \Delta \colon  \fS(D,F) \to  \fS(D,E) \otimes  \fS(E,F),
    \quad u_{ax} \mapsto \sum_{s\in E} u_{as} \otimes u_{sx}.  
\end{equation}

\subsection{Isomorphism algebra associated to the game}
\label{sec:iso-algebra-game}

The isomorphism algebra associated to the $(M,N,\pS)$--isomorphism game is
\begin{equation*}
   \QMat{M}{N}{\pS} = \fS(\pssets{\pS}{M},\pssets{\pS}{N}) / J_{\bullet}(M,N,\pS)
\end{equation*}
where $J_{\bullet}(M,N,\pS)$ is the self-adjoint, two-sided ideal 
\begin{equation*}
    J_{\bullet}(M,N,\pS)
    = \langle u_{\fa \fx} u_{\fb \fy} \colon 
    \rel_{\pS(M)}(\fa, \fb) \neq \rel_{\pS(N)}(\fx, \fy) \rangle.
\end{equation*}

The cocompositions on the algebras $\fS(\pssets{\pS}{M},\pssets{\pS}{N})$ define cocompositions on the isomorphism algebras $\QMat{M}{N}{\pS}$ as we see in the next proposition. 
\begin{proposition}
    The map in \Cref{eq:cocomp-sym} induces a  unital $\ast$-homomorphism
\begin{equation}
\label{eq:cocomp}
    \Delta \colon \QMat{L}{N}{\pS} \to \QMat{L}{M}{\pS} \otimes \QMat{M}{N}{\pS} 
\end{equation}
\end{proposition}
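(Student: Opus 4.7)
The plan is to verify that the cocomposition already defined on the enveloping magic-unitary algebras $\fS(-,-)$ descends to the quotients $\QMat{-}{-}{\pS}$. Concretely, the map
\[
\Delta \colon \fS(\psSets L, \psSets N) \to \fS(\psSets L, \psSets M) \otimes \fS(\psSets M, \psSets N), \quad u_{\fa\fy} \mapsto \sum_{\fs\in \psSets M} u_{\fa\fs} \otimes u_{\fs\fy}
\]
is already a unital $\ast$-homomorphism by the standard argument (one checks that the images of the defining magic unitary relations hold, which is a short calculation using that both tensor factors are themselves magic unitaries). So the only real content is to show that $\Delta$ sends the generators of $J_{\bullet}(L,N,\pS)$ into the kernel of the quotient
\[
\fS(\psSets L, \psSets M) \otimes \fS(\psSets M, \psSets N) \twoheadrightarrow \QMat{L}{M}{\pS} \otimes \QMat{M}{N}{\pS}.
\]
This kernel is the ideal generated by $J_{\bullet}(L,M,\pS) \otimes 1$ and $1 \otimes J_{\bullet}(M,N,\pS)$.

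Next I would take an arbitrary generator $u_{\fa\fy} u_{\fb\fz}$ of $J_{\bullet}(L,N,\pS)$, i.e., a pair $(\fa,\fb)\in \psSets L \times \psSets L$ and $(\fy,\fz)\in \psSets N \times \psSets N$ with $\rel_{\pS(L)}(\fa,\fb) \neq \rel_{\pS(N)}(\fy,\fz)$. Expanding,
\[
\Delta(u_{\fa\fy} u_{\fb\fz}) \;=\; \sum_{\fs,\ft \in \psSets M} u_{\fa\fs} u_{\fb\ft} \,\otimes\, u_{\fs\fy} u_{\ft\fz}.
\]
For each individual summand I claim at least one of the two tensor factors already lies in the corresponding $J_{\bullet}$: if $\rel_{\pS(M)}(\fs,\ft) \neq \rel_{\pS(L)}(\fa,\fb)$, then $u_{\fa\fs}u_{\fb\ft} \in J_{\bullet}(L,M,\pS)$; otherwise $\rel_{\pS(M)}(\fs,\ft) = \rel_{\pS(L)}(\fa,\fb)$, which by the hypothesis on $(\fa,\fb,\fy,\fz)$ forces $\rel_{\pS(M)}(\fs,\ft) \neq \rel_{\pS(N)}(\fy,\fz)$, placing $u_{\fs\fy}u_{\ft\fz} \in J_{\bullet}(M,N,\pS)$. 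Either way, the summand lies in $J_{\bullet}(L,M,\pS) \otimes \fS(\psSets M, \psSets N) + \fS(\psSets L, \psSets M) \otimes J_{\bullet}(M,N,\pS)$, and the same holds for the full sum. Thus $\Delta$ descends to a well-defined unital $\ast$-homomorphism on the quotients.

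There is essentially no obstacle; the entire argument rests on the trivial trichotomy observation that if $\rel_{\pS(L)}(\fa,\fb)\neq \rel_{\pS(N)}(\fy,\fz)$, then no common value of $\rel_{\pS(M)}(\fs,\ft)$ can match both. I would present the proof in just a few lines, noting the descent of $\Delta$ as the verification that $\Delta(J_{\bullet}(L,N,\pS))$ is contained in the relevant ideal, and leaving the verification that the unital $\ast$-algebra structure is preserved to the underlying map on $\fS$.
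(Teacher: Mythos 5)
Your proposal is correct and follows essentially the same route as the paper: both reduce to checking that $\Delta$ kills each generator $u_{\fa\fx}u_{\fb\fy}$ of $J_{\bullet}(L,N,\pS)$ by expanding the image as $\sum_{\fs,\ft} u_{\fa\fs}u_{\fb\ft}\otimes u_{\fs\fx}u_{\ft\fy}$ and observing that $\rel_{\pS(M)}(\fs,\ft)$ cannot simultaneously equal $\rel_{\pS(L)}(\fa,\fb)$ and $\rel_{\pS(N)}(\fx,\fy)$, so each summand vanishes in the quotient tensor product. Your write-up simply makes explicit the descent of the magic-unitary relations, which the paper leaves implicit.
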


\begin{proof}
Consider 
\begin{equation*}
     \Delta(u_{\fa\fx} u_{\fb\fy}) = \sum_{\fs, \ft} u_{\fa\fs} u_{\fb\ft} \otimes u_{\fs \fx} u_{\ft \fy} 
\end{equation*}
If $\rel_{\pS(L)}(\fa, \fb) \neq \rel_{\pS(N)}(\fx, \fy)$ then each term on the right is zero $\QMat{L}{M}{\pS} \otimes \QMat{M}{N}{\pS}$ because each $\rel_{\pS(M)}(\fs, \ft)$ cannot simultaneously equal $\rel_{\pS(L)}(\fa, \fb)$ and $\rel_{\pS(N)}(\fx, \fy)$. 
\end{proof}

For $M=N$, we write $\QAlg{M}{\pS} = \QMat{M}{M}{\pS}$ and the cocomposition in Formula \eqref{eq:cocomp} defines a coproduct on $\QAlg{M}{\pS}$. 
We define the $\pS$-\textit{quantum automorphism group} to be $\QAutp{M}{\pS} =(\QAlg{M}{\pS}, \Delta)$.  

Let us explain how $\QAutp{M}{\pS}$ qualifies as a quantization of the ordinary automorphism group of $M$. Denote by $\Sym(E)$ the group of permutations of the set $E$.  Recall from \S\ref{sec:matroid-iso-str} that $\Aut(M)$ is defined as a subgroup of $\Sym(\ssets E M)$. The group $\Aut(M)$ acts on $\psSets M$ by $\sigma(\fa) = (\sigma(S_{\fa}), \sigma(p_{\fa}))$ for $\sigma \in \Aut(M)$ and $\fa\in \psSets M$. This action is faithful and so it affords an injective group homomorphism $\Aut(M) \to \Sym(\psSets M)$; denote the image by $\Aut_{\bullet}(M,\pS)$. Set
\begin{equation*}
    \QAlg{M}{\pS}^{\com} = \QAlg{M}{\pS} / \langle u_{\fa\fx} u_{\fb\fy} - u_{\fb\fy} u_{\fa\fx}  \suchthat \fa,\fb,\fx,\fy\in \psSets M \rangle.
\end{equation*}
Given a permutation group $G\subset \Sym(\psSets M)$, one obtains a quantum group $C(G)$ in a standard way. 
The way we view $\QAutp{M}{\pS}$ as a quantization of $\Aut_{\bullet}(M,\pS)$ is given by the following proposition. 
\begin{proposition}
\label{prop:quantization}
    We have an identification 
    \begin{equation*}
        \QAlg{M}{\pS}^{\com} = C(\Aut_{\bullet}(M,\pS)).
    \end{equation*}
\end{proposition}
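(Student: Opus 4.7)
The plan is to identify both sides of the equality as algebras of functions on their (finite) spectra, and then exhibit a bijection between those spectra. First I would observe that $\QAlg{M}{\pS}^{\com}$ is a finitely generated commutative $\C$-algebra whose generators $u_{\fa\fx}$ satisfy $u_{\fa\fx}^2 = u_{\fa\fx}$, so every element is a polynomial in commuting idempotents. Hence $\QAlg{M}{\pS}^{\com}$ is finite-dimensional and reduced, and by Gelfand duality it equals $C(X)$ where $X = \operatorname{Hom}_{\C\text{-alg}}(\QAlg{M}{\pS}^{\com}, \C)$. Similarly $C(\Aut_{\bullet}(M,\pS))$ is the algebra of $\C$-valued functions on the finite group $\Aut_{\bullet}(M,\pS) \subset \Sym(\psSets M)$. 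So it suffices to produce a natural bijection between $X$ and $\Aut_{\bullet}(M,\pS)$ that sends the generators $u_{\fa\fx}$ to the coordinate functions $\sigma \mapsto \delta_{\sigma(\fa),\fx}$.

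Next I would unpack what a character $\chi \colon \QAlg{M}{\pS}^{\com} \to \C$ looks like. Idempotency gives $\chi(u_{\fa\fx}) \in \{0,1\}$, and the magic unitary row and column sum relations force that for every $\fa$ there is a unique $\fx =: \sigma_{\chi}(\fa)$ with $\chi(u_{\fa\fx})=1$, and dually for columns, so $\sigma_{\chi}$ is a permutation of $\psSets M$. The defining relations $u_{\fa\fx}u_{\fb\fy} = 0$ whenever $\rel_{\pS(M)}(\fa,\fb) \neq \rel_{\pS(M)}(\fx,\fy)$ then imply that evaluating at $\fx = \sigma_{\chi}(\fa)$ and $\fy = \sigma_{\chi}(\fb)$ yields $1 \neq 0$, which is only possible if $\rel_{\pS(M)}(\fa,\fb) = \rel_{\pS(M)}(\sigma_{\chi}(\fa),\sigma_{\chi}(\fb))$ for all $\fa,\fb \in \psSets M$. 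By \Cref{cor:induced+iso} applied to $M = N$, the permutation $\sigma_{\chi}$ is induced by an automorphism of $M$, hence $\sigma_{\chi} \in \Aut_{\bullet}(M,\pS)$.

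Conversely, any $\tau \in \Aut_{\bullet}(M,\pS)$ defines a character by $\chi_{\tau}(u_{\fa\fx}) = \delta_{\tau(\fa),\fx}$: the magic unitary and commutation relations are immediate from $\tau$ being a permutation, and since $\tau$ is induced by a matroid automorphism it preserves $\rel_{\pS(M)}$, so the product relations $u_{\fa\fx}u_{\fb\fy} = 0$ hold at $\chi_{\tau}$. The assignments $\chi \mapsto \sigma_{\chi}$ and $\tau \mapsto \chi_{\tau}$ are mutually inverse, giving the required bijection of spectra and hence the algebra isomorphism. Finally, under this identification the cocomposition $\Delta(u_{\fa\fx}) = \sum_{\fs} u_{\fa\fs}\otimes u_{\fs\fx}$ becomes the pullback of group multiplication in $\Aut_{\bullet}(M,\pS)$, so the quantum group structures coincide.

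The only delicate step is verifying that a character gives a relation-preserving permutation, since one must rule out that $\chi(u_{\fa\fx}u_{\fb\fy})$ could vanish even when both $\chi(u_{\fa\fx})$ and $\chi(u_{\fb\fy})$ equal $1$; this is automatic in the commutative quotient because $\chi$ is multiplicative. The rest is a routine application of Gelfand duality together with \Cref{cor:induced+iso}, which does the combinatorial heavy lifting of turning a relation-preserving bijection of pointed sets into a matroid automorphism.
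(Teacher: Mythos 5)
Your argument is correct and is essentially the paper's proof written out in full: the paper simply cites \cite[Exer.~1.10]{Freslon} for the character/Gelfand-duality step identifying the abelianization of a quantum permutation algebra with functions on the group of relation-preserving permutations, and then invokes \Cref{cor:induced+iso} exactly as you do to identify those permutations with $\Aut_{\bullet}(M,\pS)$. No discrepancies to report.
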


\begin{proof}
This is a straightforward application of \cite[Exer.~1.10]{Freslon} and \Cref{cor:induced+iso}.
\end{proof}

The significance of the isomorphism algebra $\QMat{M}{N}{\pS}$ is that it gives an algebraic characterization of when the $(M,N,\pS)$--isomorphism game has a perfect quantum strategy. Since the $(M,N,\pS)$--isomorphism game is a colored graph isomorphism game, and the result is known in the uncolored case (see \cite{BrannanChirvasituEiflerHarrisPaulsenSuWasilewski}), we will only give a sketch of the proof.
\begin{theorem}
\label{thm:algebraic-quantum-iso}
Let $\pS$ be a matroid isomorphism structure that covers the matroids $M$ and $N$. The $(M,N,\pS)$--isomorphism game has a perfect quantum commuting strategy if and only if $M$ and $N$ are $\pS$-quantum isomorphic, i.e., $\QMat{M}{N}{\pS}$ is nonzero.
\end{theorem}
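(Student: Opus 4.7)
The plan is to use Theorem \ref{thm:perfectstrat} as the bridge between the operator-algebraic characterization of perfect quantum commuting strategies and the algebraic presentation of $\QMat{M}{N}{\pS}$, and then to leverage Remark \ref{rmk:colored+graph+iso} to reduce to the graph isomorphism algebra case treated in \cite{BrannanChirvasituEiflerHarrisPaulsenSuWasilewski}.

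For the forward direction, suppose a perfect quantum commuting strategy exists. Theorem \ref{thm:perfectstrat} furnishes a $C^*$-algebra $A$, projections $F_{\fa\fx}\in A$, and a faithful tracial state $\tau$ obeying conditions (1)--(3). In a $C^*$-algebra, projections summing to the identity are pairwise orthogonal, so conditions (1) and (2) supply the magic unitary relations $F_{\fa\fx}F_{\fa\fy}=0$ for $\fx\neq \fy$ and $F_{\fa\fx}F_{\fb\fx}=0$ for $\fa\neq \fb$, while (3) yields the generators of $J_{\bullet}(M,N,\pS)$. Therefore $u_{\fa\fx}\mapsto F_{\fa\fx}$ extends to a unital $\ast$-homomorphism $\QMat{M}{N}{\pS}\to A$ whose image contains the nonzero projections $F_{\fa\fx}$, forcing $\QMat{M}{N}{\pS}\neq 0$.

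The converse direction is the main obstacle and requires producing the data of Theorem \ref{thm:perfectstrat} from the nonvanishing of $\QMat{M}{N}{\pS}$. The plan is to form the universal $C^*$-envelope of $\QMat{M}{N}{\pS}$ (which is well-defined since the self-adjoint idempotent generators are norm-bounded by $1$ in any $\ast$-representation, and it is nonzero since $\QMat{M}{N}{\pS}$ is nonzero), and then to construct on it a tracial state whose null-ideal quotient carries a faithful trace while preserving all defining relations. The existence of such a trace on a nonzero synchronous game $\ast$-algebra with complete row/column magic unitary is the core content of the proofs in \cite{AtseriasEtAl, BrannanChirvasituEiflerHarrisPaulsenSuWasilewski}; it is established by a Hahn--Banach / GNS argument that uses synchronicity in an essential way. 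Synchronicity of our game is supplied by Proposition \ref{prop:matroid+bisynchronous}, and because $J_{\bullet}(M,N,\pS)$ is generated by quadratic monomial relations, it is preserved under each step of the completion-and-quotient procedure.

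Finally, Remark \ref{rmk:colored+graph+iso} identifies the $(M,N,\pS)$--matroid isomorphism game with the colored graph isomorphism game on $\Grel(M,\pS)$ and $\Grel(N,\pS)$, and a direct comparison shows that the defining relations of $\QMat{M}{N}{\pS}$ are precisely the color-preserving magic unitary relations of that colored graph isomorphism algebra. The uncolored result of \cite{BrannanChirvasituEiflerHarrisPaulsenSuWasilewski} extends essentially verbatim to this colored setting because color preservation is imposed by additional monomial relations compatible with the Hahn--Banach / GNS procedure. This justifies the ``sketch" referred to in the paper.
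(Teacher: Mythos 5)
Your forward direction is fine and coincides with the paper's: Theorem~\ref{thm:perfectstrat} hands you projections satisfying exactly the defining relations of $\QMat{M}{N}{\pS}$, so the algebra admits a unital representation on a nonzero $C^*$-algebra and is therefore nonzero.

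The converse, however, has a genuine gap. You assert that the existence of a faithful tracial state on a nonzero synchronous game $\ast$-algebra ``is the core content'' of \cite{AtseriasEtAl, BrannanChirvasituEiflerHarrisPaulsenSuWasilewski} and ``is established by a Hahn--Banach / GNS argument that uses synchronicity in an essential way.'' This is not the mechanism, and no such general statement is true: for an arbitrary synchronous game, the nonvanishing of the game $\ast$-algebra is a strictly weaker condition than the existence of a perfect quantum commuting strategy (this is precisely why ``algebraically perfect'' strategies form a separate, weaker rung in the hierarchy). Mere synchronicity plus nonvanishing does not produce a trace. What actually makes the argument work for isomorphism-type games --- and what the paper's proof supplies --- is the quantum-group structure: the cocomposition \eqref{eq:cocomp} equips $\QMat{M}{N}{\pS}$ with left and right comodule actions of $\QAutp{M}{\pS}$ and $\QAutp{N}{\pS}$, one verifies that the Galois maps $x\otimes y\mapsto \alpha(x)(1\otimes y)$ are bijective (with an explicit inverse built from the antipode-like map $u_{\fa\fb}\mapsto\sum_{\fk}u_{\fa\fk}\otimes u_{\fb\fk}$), so that $\QMat{M}{N}{\pS}$ is a bigalois extension, and only then do Theorems 3.15 and 3.17 of \cite{BrannanChirvasituEiflerHarrisPaulsenSuWasilewski} yield a faithful tracial state, which feeds back into Theorem~\ref{thm:perfectstrat}. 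Your sketch omits this bigalois step entirely, and without it the passage from ``$\QMat{M}{N}{\pS}\neq 0$'' to ``faithful trace exists'' is unsupported. The appeal to Remark~\ref{rmk:colored+graph+iso} is a legitimate alternative route in spirit, but you would still need to verify that the colored-graph version of the bigalois/trace theorem holds, which is again a quantum-group argument rather than a Hahn--Banach one.
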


\begin{proof}
    Suppose that the $(M,N,\pS)$--isomorphism game has a perfect quantum commuting strategy. By \Cref{thm:perfectstrat}, there exists a nonzero representation of $\QMat{M}{N}{\pS}$, which implies that $\QMat{M}{N}{\pS}$ is nonzero.

    Conversely, suppose $\QMat{M}{N}{\pS}$ is nonzero. Let $\fA =\QAutp{M}{\pS}$ and $\fB=\QAutp{N}{\pS}$.  We will show that $\QMat{M}{N}{\pS}$ is a $(\fA,\fB)$--bigalois extension and use results of \cite{BrannanChirvasituEiflerHarrisPaulsenSuWasilewski} to obtain a tracial state on $\QMat{M}{N}{\pS}$. The existence of a perfect quantum commuting strategy then follows from \Cref{thm:perfectstrat}.

    The cocomposition in Formula \eqref{eq:cocomp} yields the following left, respectively right  comodule actions:
    \begin{align*}
        \alpha: \QMat{M}{N}{\pS}&\to \QAlg{M}{\pS} \otimes \QMat{M}{N}{\pS},\quad u_{\fa \fb}\mapsto \sum_\fk u_{\fa \fk} \otimes u_{\fk \fb},\\
        \beta: \QMat{M}{N}{\pS}&\to  \QMat{M}{N}{\pS}\otimes \QAlg{N}{\pS}, \quad u_{\fa \fb}\mapsto \sum_\fk u_{\fa \fk} \otimes u_{\fk \fb}.
    \end{align*}
    Furthermore, the linear map
    \begin{align*}
        k_l: \QMat{M}{N}{\pS}\otimes \QMat{M}{N}{\pS} \to \QAlg{M}{\pS} \otimes \QMat{M}{N}{\pS},\quad x\otimes y \mapsto \alpha(x)(1\otimes y)
    \end{align*}
    is bijective, with inverse $\eta_l=(\mathrm{id}\otimes m)(\gamma\otimes \mathrm{id})$. Here, $m(x \otimes y)=xy$ and $\gamma(u_{\fa \fb})=\sum_\fk u_{\fa \fk} \otimes u_{\fb \fk}$. This shows that $(\QMat{M}{N}{S}, \alpha)$ is a left $\fA$-Galois extension. Similarly, one gets that $(\QMat M N S, \beta)$ is a right $\fB$-Galois extension. One may verify that $(\mathrm{id}\otimes \beta)\circ \alpha=(\alpha\otimes\mathrm{id})\circ \beta$. In summary,  $\QMat{M}{N}{S}$ is a $(\fA,\fB)$--bigalois extension.
    Using Theorems 3.15 and 3.17 of \cite{BrannanChirvasituEiflerHarrisPaulsenSuWasilewski}, we have a faithful tracial state $\tau \colon \QMat{M}{N}{\pS}\to \C$.  Now, \Cref{thm:perfectstrat} guarantees the existence of a perfect quantum commuting strategy.
\end{proof}

\subsection{Further properties of quantum isomorphic matroids} If $M$ and $N$ are $\pS$-quantum isomorphic, then $|\psSets M| = |\psSets N|$ because the magic unitary $[u_{\fa\fx}]$ must be square.  We record other properties that must be preserved by quantum isomorphism. 

\begin{lemma}
\label{lem:same-size}
     Suppose $M$ and $N$ are $\pS$-quantum isomorphic. If $\fa \in \pssets \pS M$, $\fx \in \pssets \pS N$ satisfy $|S_{\fa}| \neq |S_{\fx}|$, then  
     \begin{align*}
         u_{\fa \fx} \equiv 0 \quad \text{in} \quad \QMat M N \pS.
     \end{align*}
\end{lemma}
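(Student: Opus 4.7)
The plan is to exploit the fact that, for a fixed pointed set $\fa$, the number of pointed sets $\fb$ with $\rel_{\pS(M)}(\fa,\fb) = 2$ depends only on $|S_{\fa}|$. Indeed, $\rel_{\pS(M)}(\fa,\fb) = 2$ forces $S_{\fb} = S_{\fa}$ and $p_{\fb} \neq p_{\fa}$, so these $\fb$ are exactly the pairs $(S_{\fa}, q)$ with $q \in S_{\fa}\setminus\{p_{\fa}\}$, giving $|S_{\fa}|-1$ of them; likewise there are $|S_{\fx}|-1$ pointed sets $\fy$ with $\rel_{\pS(N)}(\fx,\fy) = 2$. These two distinct counts are what will force $u_{\fa\fx}$ to vanish.

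I would then evaluate the double sum
\[
\Sigma := \sum_{\fb\,:\,\rel_{\pS(M)}(\fa,\fb)=2}\ \sum_{\fy\,:\,\rel_{\pS(N)}(\fx,\fy)=2} u_{\fa\fx}\, u_{\fb\fy}
\]
in $\QMat{M}{N}{\pS}$ in two ways. Summing over $\fb$ first and fixing an admissible $\fy$: since $\rel_{\pS(N)}(\fx,\fy)=2$, the defining relations of $J_{\bullet}(M,N,\pS)$ kill $u_{\fa\fx} u_{\fb\fy}$ whenever $\rel_{\pS(M)}(\fa,\fb) \neq 2$. Multiplying the column-sum identity $\sum_{\fb} u_{\fb\fy} = 1$ on the left by $u_{\fa\fx}$ therefore collapses to $\sum_{\fb : \rel_{\pS(M)}(\fa,\fb)=2} u_{\fa\fx} u_{\fb\fy} = u_{\fa\fx}$, and summing over the $|S_{\fx}|-1$ admissible $\fy$ yields $\Sigma = (|S_{\fx}|-1)\, u_{\fa\fx}$. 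Reversing the order of summation and applying the same argument with the row-sum identity $\sum_{\fy} u_{\fb\fy} = 1$ produces instead $\Sigma = (|S_{\fa}|-1)\, u_{\fa\fx}$.

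Comparing the two evaluations gives $(|S_{\fa}| - |S_{\fx}|)\, u_{\fa\fx} = 0$ in $\QMat{M}{N}{\pS}$, and since $|S_{\fa}| \neq |S_{\fx|}$ the scalar is a nonzero complex number, so $u_{\fa\fx} \equiv 0$. The quantum isomorphism hypothesis is not actually used in the derivation; it is only present so that the conclusion $u_{\fa\fx} \equiv 0$ is meaningful (i.e.\ in a nonzero algebra). I do not anticipate any real obstacle: the only care needed is the bookkeeping of which ideal relations are triggered in each order of summation, and this is a purely formal consequence of the magic-unitary axioms together with the relation-preserving ideal defining $\QMat{M}{N}{\pS}$.
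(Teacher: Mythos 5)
Your proof is correct and is essentially the paper's own argument in a slightly more symmetric form: both evaluate a double sum of products $u_{\fa\fx}u_{\fb\fy}$ in two orders, using the row and column sums of the magic unitary together with the relations $u_{\fa\fx}u_{\fb\fy}=0$ for $\rel_{\pS(M)}(\fa,\fb)\neq\rel_{\pS(N)}(\fx,\fy)$ to force two counts to agree. The paper sums over all of $\psSets M\times\psSets N$ and splits off the terms with $S_{\fs}=S_{\fa}$, arriving at $u_{\fa\fx}\cdot|\psSets M|=u_{\fa\fx}\cdot(|\psSets M|-|S_{\fa}|+|S_{\fx}|)$, whereas you restrict at the outset to pairs at relation $2$ and obtain $(|S_{\fa}|-1)\,u_{\fa\fx}=(|S_{\fx}|-1)\,u_{\fa\fx}$; the two computations are interchangeable.
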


\begin{proof}
  Let $\fa \in \psSets M$, $\fx \in \psSets N$, and set $A = S_{\fa}$, $X = S_{\fx}$.  Then
\begin{align*}
    u_{\fa \fx} \cdot |\psSets M| 
    = u_{\fa\fx} \cdot \sum_{\ft \in \psSets N} \sum_{\fs \in \psSets M}u_{\fs\ft}
  = \sum_{\ft \in \psSets N} \sum_{\substack{\fs\in \psSets M\\ S_{\fs} \neq A}} u_{\fa \fx}u_{\fs \ft} 
  + \sum_{\ft \in \psSets N} \sum_{\substack{\fs \in \psSets M\\ S_{\fs} = A}} u_{\fa\fx}u_{\fs\ft}
\end{align*}
In the double sum on the left, we may use a row sum relation of $\fS(\pssets{\pS}{M},\pssets{\pS}{N})$ to get
\begin{equation*}
    \sum_{\ft \in \psSets N} \sum_{\substack{\fs\in \psSets M\\ S_{\fs} \neq A}} u_{\fa \fx}u_{\fs \ft} = \sum_{\substack{\fs  \in \psSets M\\ S_{\fs} \neq A}}  u_{\fa\fx}  =  u_{\fa\fx} \cdot ( |\psSets M| - |A|).
\end{equation*}
In the double sum on the right, we may use the relation $u_{\fa\fx}u_{\fs\ft} = 0$ for $\rel_{\psSets M}(\fa, \fs) \neq \rel_{\psSets N}(\fx,\ft)$ to get
\begin{equation*}
    \sum_{\ft \in \psSets N} \sum_{\substack{\fs \in \psSets M\\ S_{\fs} = A}} u_{\fa\fx}u_{\fs\ft} = 
    \sum_{\substack{\ft \in \psSets N \\ S_{\ft} = X}} \sum_{\substack{\fs \in \psSets M\\ S_{\fs} = A}} u_{\fa\fx}u_{\fs\ft} = \sum_{\substack{\ft \in \psSets N \\ S_{\ft} = X}} \sum_{\fs \in \psSets M} u_{\fa\fx}u_{\fs\ft} = u_{\fa\fx} \cdot |X|.
\end{equation*}
Combining these two calculations, we see that 
\begin{equation*}
    u_{\fa \fx} \cdot |\psSets M|  = u_{\fa\fx} \cdot(|\psSets M| - |A| + |X|),
\end{equation*}
which implies that $|A| = |X|$ or $u_{\fa\fx} = 0$, as required.
\end{proof}

Denote by $\pS(M, r)$ the elements of $\pS(M)$ that have exactly $r$ elements. 
\begin{proposition}
\label{prop:misc+properties}
    Suppose $M$ and $N$ are $\pS$-quantum isomorphic. 
    \begin{enumerate}
        \item We have $|\pS(M,r)| = |\pS(N,r)|$ for each $r\geq 1$. In particular,  $|\ssets \pS M| = |\ssets \pS N|$. 
        \item If $\pS$ is $\pB$ or $\NB$ then $M$ and $N$ have the same rank and their ground sets have the same size. 
        \item If $\pS = \pC$ and $\rk(M) = \rk(N)$ then $M$ is paving if and only if $N$ is paving.
    \end{enumerate}
\end{proposition}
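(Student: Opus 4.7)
The engine of the proof is \Cref{lem:same-size}, which asserts that $u_{\fa\fx}=0$ in $\QMat M N \pS$ whenever $|S_\fa|\neq |S_\fx|$. For part (1), the plan is to combine this vanishing with the row and column sums of the magic unitary. Fix $r\geq 1$; for any $\fa\in\psSets M$ with $|S_\fa|=r$, the row sum $\sum_\fx u_{\fa\fx}=1$ collapses to $\sum_{\fx\colon |S_\fx|=r} u_{\fa\fx}=1$. Summing over such $\fa$ and swapping sums, the right-hand side becomes $\sum_{\fx\colon |S_\fx|=r}\bigl(\sum_{\fa\colon |S_\fa|=r}u_{\fa\fx}\bigr)$, and each inner column sum again reduces to $1$ by the same vanishing. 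Since the number of pointed sets $(A,p)$ with $|A|=r$ in $M$ is $r\cdot|\pS(M,r)|$ (and analogously for $N$), this produces
\begin{equation*}
    r\cdot|\pS(M,r)|\cdot 1 \;=\; r\cdot|\pS(N,r)|\cdot 1
\end{equation*}
in $\QMat M N \pS$. Since this algebra is a nonzero $\C$-algebra, its unit has infinite additive order, and we may cancel to conclude $|\pS(M,r)|=|\pS(N,r)|$. Summing over $r$ then yields $|\pS(M)|=|\pS(N)|$.

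Parts (2) and (3) will follow as corollaries of (1). For (2), I would use that every basis (respectively, nonbasis) of a matroid $L$ has size $\rk(L)$, so (1) applied with $\pS\in\{\pB,\NB\}$ and $r=\rk(M)$ gives $|\pS(N,\rk(M))|=|\pS(M)|>0$; since all elements of $\pS(N)$ have size $\rk(N)$, this forces $\rk(N)=\rk(M)$. The ground-set equality I would then deduce from the more general \Cref{thm:gs+same+size}. For (3), applying (1) with $\pS=\pC$ shows that $M$ and $N$ share the same multiset of circuit sizes; in particular, $M$ has a circuit of size less than $\rk(M)$ if and only if $N$ has one of size less than $\rk(N)=\rk(M)$ (using the rank-equality hypothesis), so $M$ is paving exactly when $N$ is.

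The main obstacle lies in part (1)---specifically, in the bookkeeping required to swap sums and collapse the row/column relations inside the algebra, and then to descend from the algebraic identity $r\cdot|\pS(M,r)|\cdot 1=r\cdot|\pS(N,r)|\cdot 1$ to an equality of integers. This descent rests only on $\QMat M N \pS\neq 0$, but keeping clear which equalities live in the algebra versus in $\Z$ is the place where care is needed. Parts (2) and (3) are then essentially routine consequences.
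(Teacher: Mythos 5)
Your proposal is correct and follows essentially the same route as the paper: part (1) rests on \Cref{lem:same-size} together with the row/column relations (the paper packages your counting as the observation that $[u_{\fa\fx} : |S_{\fa}|=|S_{\fx}|=r]$ is a nonzero magic unitary and hence square, which is proved by exactly your summation), and parts (2) and (3) are deduced as you describe, with the ground-set claim in (2) likewise resting on \Cref{thm:gs+same+size}. No gaps.
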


\begin{proof}
Consider (1). We have
    \begin{equation*}
        \sum_{\substack{\fb \in \psSets M \\ |S_{\fb}| = |S_{\fx}| }} u_{\fb\fx} = \sum_{\fb \in \psSets M} u_{\fb\fx} = 1 = \sum_{\fy \in \psSets N} u_{\fa\fy} = \sum_{\substack{\fy \in \psSets N \\ |S_{\fy}| = |S_{\fa}| }} u_{\fa\fy}.
    \end{equation*}
So, for a fixed $r$, the submatrix $[u_{\fa\fx} \suchthat |S_{\fa}| = |S_{\fx}| = r]$ is a magic unitary, and hence square. Because we have, e.g., that the number of elements $\fa$ of $\psSets M$ that have $r$ elements is $r |\pS(M,r)|$, we get the desired equality $|\pS(M,r)| = |\pS(N,r)|$. 
  
For (2), suppose  $\rk(M) \neq \rk(N)$. This implies that $|A| \neq |X|$ for all $\fa = (A,p) \in \pssets \pB M$ and $\fx = (X,q) \in \pssets \pB N$. This in turn causes the isomorphism algebra to be zero, since $u_{\fa\fx} = 0$ for all such $\fa$ and $\fx$ by \Cref{lem:same-size}.

 Finally assume $M$ is paving and let $\rk(M)=r= \rk(N)$. By $(1)$ we have
  \[
    |\pC(M)| = |\pC(M,r)| + |\pC(M,r+1)| = \sum_{k \geq 1} |\pC(N,k)| = |\pC(N)|.
  \]
  Knowing that $|\pC(M,k)| = |\pC(N,k)|$ for all $k\geq0$ we find that $N$ has only circuits of size $r$ and $r+1$, resulting in $N$ being paving. The converse is similar.
\end{proof}

\section{Isomorphism algebras on the ground set}
The variables appearing in the quantum automorphism groups $\QAutp{M}{\pS}$ are indexed by pairs of elements in $\psSets M$. The matroid quantum automorphism groups appearing in \cite{CoreyJoswigSchanzWackWeber} are defined as compact matrix quantum groups whose variables are indexed by the ground set. We extend the definitions in \cite{CoreyJoswigSchanzWackWeber} to isomorphism algebras $\GS M N \pS$ and compare these to the algebras $\QMat M N \pS$. 

\subsection{Ground set quantum isomorphism of matroids}
Let $M$ be a matroid of rank $r$. Recall from \cite{CoreyJoswigSchanzWackWeber} that a tuple $A\in \ssets{E}{M}^{s}$ is an \textit{independent tuple} (respectively, a \textit{basis tuple} or \textit{hyperplane tuple}) if $A$ has no repeated elements and its underlying set in an independent set (respectively, a basis or a hyperplane) of $M$. Furthermore, $A$ is a \textit{circuit tuple} if $A = (a,a)$ for some nonloop element $a\in A$ or if $A$ has no repeated elements and its underlying set is a circuit of $M$. A \textit{nonbasis tuple} is a length-$r$ tuple that is not a basis tuple.  

Let $\pS$ be one of the matroid isomorphism structures: independent sets, bases, nonbases, circuits, or hyperplanes. Denote by $\overline{\pS}(M)$ the set of $\pS$-tuples of $M$ as defined above.
The algebra $\GS M N \pS$ we define below is a quotient of the noncommutative polynomial ring $\C\langle \ssets E M, \ssets E N \rangle$, but we denote the variables by $w_{ax}$ to distinguish them from the variables we use in $\QMat M N \pS$.  
Given tuples $A = (a_1,\ldots,a_s)$ and $X = (x_1,\ldots,x_s)$, define
\begin{equation*}
    w_{AX} = w_{a_1x_1} \cdots w_{a_sx_s}.
\end{equation*}
The \textit{$\pS$--isomorphism algebra} of matroids $M$ and $N$ is
\begin{equation*}
    \GS M N \pS = \fS(\ssets E M , \ssets E N ) / K(M, N, \pS)
\end{equation*}
where $K(M, N, \pS)$ is the self-adjoint, two-sided ideal of $\fS(\ssets E M , \ssets E N )$ defined by
\begin{equation*}
    K(M, N, \pS) = 
    \langle 
    w_{AX} \suchthat (A\in \overline{\pS}(M) \text{ and } X\notin \overline{\pS}(N)) \text{ or }
    (A\notin \overline{\pS}(M) \text{ and } X\in \overline{\pS}(N)) 
    \rangle.
\end{equation*}
Note that $\GS M N \pB =  \GS M N \NB$.
Given matroids $L$, $M$ and $N$ we have a cocomposition
\begin{align*}
  \GS L N \pS &\to \GS L M \pS \otimes \GS M N \pS \\
    \quad w_{ax} &\mapsto \sum_{p\in \ssets{E}{M}} w_{ap} \otimes w_{px}.
\end{align*}
When $M$ and $N$ are equal, the algebras $\GS M N \pS$ recover the quantum automorphism groups $\QAut{\pS}{M}$ of $M$ in \cite{CoreyJoswigSchanzWackWeber}. 

\subsection{Ground sets of quantum isomorphic matroids}
We proved in \Cref{prop:misc+properties} that some basic numerical properties of matroids must be preserved under quantum isomorphism. We now  prove that the ground sets of quantum isomorphic matroids must have the same size by comparing magic unitaries on the ground set to those on pointed sets. 

Suppose that $\pS$ covers the matroids $M$ and $N$, and for each $a\in \ssets{E}{M}$ fix $\fb_{a} \in \pssets{\pS}{M}$ with $p_{\fb_{a}} = a$.  Define 
\begin{equation}
\label{eq:gs-to-pointed}
    \varphi \colon 
    \C\langle E(M), E(N)\rangle 
    \to 
    \C \langle \pS_{\bullet}(M), \pS_{\bullet}(N)  \rangle,  
    \quad  
    w_{ax} \mapsto \sum_{\substack{\ft \in \psSets N \\ p_{\ft} = x}} u_{\fb_a\ft}.
\end{equation}

\begin{proposition}
\label{prop:sym-comparison}
    The map $\varphi$ is independent of the $\fb_{a}$ and induces a morphism
    \begin{equation*}
        \varphi \colon \fS(\ssets E M, \ssets E N) \to \fS(\psSets M, \psSets N).
    \end{equation*}
\end{proposition}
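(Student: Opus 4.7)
The proposition has two claims: (i) that $\varphi$ descends to a well-defined unital $\ast$-algebra morphism, i.e.\ that the relations defining $\fS(E(M),E(N))$ are sent into the ideal defining $\fS(\psSets M,\psSets N)$, and (ii) that the induced morphism does not depend on the auxiliary choices $\fb_a$. My plan is to dispatch (i) by checking the five types of defining relations of $\fS(E(M),E(N))$ in turn, and then attack (ii) by a direct algebraic manipulation using the magic-unitary relations in the target.

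Write $P_{a,x} := \varphi(w_{ax}) = \sum_{\ft : p_\ft = x} u_{\fb_a \ft}$. The ``row-type'' relations of $\fS(E(M),E(N))$ are the easy ones. Expanding $P_{a,x}^2$ as a double sum indexed by pairs $(\ft,\ft')$ with $p_\ft = p_{\ft'} = x$ and applying $u_{\fb_a\ft}u_{\fb_a\ft'} = \delta_{\ft\ft'}u_{\fb_a\ft}$ (idempotence plus row-orthogonality) collapses the sum back to $P_{a,x}$. For $x\neq y$ the analogous expansion of $P_{a,x}P_{a,y}$ involves only distinct columns $\ft\neq\ft'$ in the same row $\fb_a$, so every term vanishes by row-orthogonality. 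Finally, $\sum_x P_{a,x} = \sum_\ft u_{\fb_a\ft} = 1$ by partitioning $\psSets N$ along the projection $p \colon \psSets N \to E(N)$ and applying the row sum for $\fb_a$.

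The ``column-type'' relations $P_{a,x}P_{b,x} = 0$ for $a \neq b$ and $\sum_a P_{a,x} = 1$ are the substantive content. In the double-sum expansion of $P_{a,x}P_{b,x}$, terms with $\ft = \ft'$ vanish by column-orthogonality since $\fb_a \neq \fb_b$, and the analysis of terms with $\ft \neq \ft'$ (both in the fiber $p^{-1}(x)$) requires additional structure in the target: one must show these products vanish because pointed sets with distinct underlying points cannot simultaneously be ``sent to'' pointed sets with the same underlying point. For the column sum, one rewrites $\sum_a \sum_{\ft: p_\ft = x} u_{\fb_a \ft}$ and seeks to identify it with a row- or column-sum-over-fibers that telescopes to $1$.

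For (ii), given two lifts $\fb_a,\fb_a' \in \psSets M$ with $p_{\fb_a}=p_{\fb_a'}=a$, I would show $\sum_{\ft: p_\ft=x} u_{\fb_a\ft} = \sum_{\ft: p_\ft = x}u_{\fb_a'\ft}$ by writing the difference $D$ and multiplying by a full row sum $1 = \sum_{\ft'} u_{\fb_a'\ft'}$ (and symmetrically on the other side), then reducing the resulting cross-terms $u_{\fb_a\ft}u_{\fb_a'\ft'}$ using the magic-unitary relations together with the identification $u_{\fb_a\ft}u_{\fb_a'\ft} = 0$ from column-orthogonality. The main obstacle will be the column-type relations in (i) and the independence claim in (ii), since these both assert a genuine fiber-preservation property: the quantum permutation of $\psSets M$ in the target must descend to one of $E(M)$, with its restriction to each fiber of $p$ determined by the ground-set action alone. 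Establishing this rigorously from the available relations is the core of the proposition and will constitute the bulk of the argument.
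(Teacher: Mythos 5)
Your handling of the row-type relations (idempotence, $\varphi(w_{ax}w_{ay})=0$ for $x\neq y$, and the row sum $\sum_x\varphi(w_{ax})=1$) matches the paper's argument. But for the column-type relations and for the independence of the $\fb_a$ you only describe what would need to be shown and explicitly defer ``the core of the proposition,'' so the proposal has a genuine gap. The missing ingredient is that the codomain relations doing the work are not the bare magic-unitary relations of $\fS(\psSets M,\psSets N)$ but the relations $u_{\fa\fx}u_{\fb\fy}=0$ whenever $\rel_{\pS(M)}(\fa,\fb)\neq\rel_{\pS(N)}(\fx,\fy)$ from the ideal $J_{\bullet}(M,N,\pS)$; the paper's proof uses these freely (so the map really lands in $\QMat M N \pS$, which is also how the proposition is applied in \Cref{thm:gs+same+size}). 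These relations supply exactly the ``fiber-preservation'' you identify as needed: if $p_{\fa}=p_{\fb}$ then $\rel_{\pS(M)}(\fa,\fb)\in\{0,1\}$, while if $p_{\fx}\neq p_{\fy}$ then $\rel_{\pS(N)}(\fx,\fy)\in\{2,3\}$, so the product vanishes. With this, $\varphi(w_{ax}w_{bx})=0$ for $a\neq b$ is immediate, since \emph{every} term $u_{\fb_a\fs}u_{\fb_b\ft}$ with $p_{\fs}=p_{\ft}=x$ dies, whereas column-orthogonality alone, as you correctly observe, only kills the diagonal terms $\fs=\ft$.

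For the independence claim and the column sum, the paper proves a single ``transpose'' identity (\Cref{lem:comparison-independent}): for $\fb=(B,a)$ and $\fy=(Y,x)$ one has $\sum_{\ft:p_{\ft}=x}u_{\fb\ft}=\sum_{\fs:p_{\fs}=a}u_{\fs\fy}$, obtained by multiplying each side by a full row (resp.\ column) sum equal to $1$ and killing the cross terms with the $J_{\bullet}$ relations. Since the left side does not involve $\fy$ and the right side involves $\fb$ only through $a=p_{\fb}$, this yields independence of $\fb_a$ at once, and summing the right-hand expression over $a$ telescopes to $1$, giving the column sum. Your proposed route for independence --- reducing the cross terms $u_{\fb_a\ft}u_{\fb_a'\ft'}$ ``using the magic-unitary relations together with column-orthogonality'' --- would fail as stated: column-orthogonality handles only $\ft=\ft'$, and the off-diagonal terms with $p_{\ft}=p_{\ft'}=x$ and $S_{\ft}\neq S_{\ft'}$ satisfy $\rel_{\pS(N)}(\ft,\ft')=1=\rel_{\pS(M)}(\fb_a,\fb_a')$, so they are not killed by the $J_{\bullet}$ relations either. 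The symmetric identity of the lemma is precisely what sidesteps this obstruction, and your proposal contains no substitute for it.
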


\noindent First consider the following lemma.

\begin{lemma}
\label{lem:comparison-independent}
Fix $\fb = (B, a) \in \pssets{\pS}{M}$ and $\fy = (Y, x) \in \pssets{\pS}{N}$. Then
\begin{align*}
    \sum_{\ft: p_{\ft} = x} u_{\fb\ft} = \sum_{\fs: p_{\fs} = a}u_{\fs\fy} \quad \text{in} \quad \fS(\psSets M, \psSets N). 
\end{align*}
\end{lemma}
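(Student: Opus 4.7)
The plan is to prove the identity by showing both sides equal the product
\[
  XY, \qquad X := \sum_{\ft:\,p_\ft = x} u_{\fb\ft}, \qquad Y := \sum_{\fs:\,p_\fs = a} u_{\fs\fy}.
\]
I would read the statement as living inside $\QMat M N \pS$ rather than the pure magic unitary algebra $\fS(\psSets M, \psSets N)$: inside the latter there is no relation tying together different rows and columns of a magic unitary, and the argument crucially uses the defining relation
$u_{\fc\fv}u_{\fd\fw} = 0$ whenever $\rel_{\pS(M)}(\fc,\fd) \neq \rel_{\pS(N)}(\fv,\fw)$.

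First I would use the column sum relation $\sum_\fs u_{\fs\fy} = 1$ to write $X = X \cdot 1 = XY + XY'$, where $Y' = \sum_{\fs:\,p_\fs \neq a} u_{\fs\fy}$, and show that the cross term
\[
  XY' \;=\; \sum_{\substack{\ft:\,p_\ft = x \\ \fs:\,p_\fs \neq a}} u_{\fb\ft}\,u_{\fs\fy}
\]
vanishes. The observation driving this is that the rule defining $\rel$ splits the value set $\{0,1,2,3\}$ into two halves according to whether the distinguished points coincide: $\{0,1\}$ if they agree and $\{2,3\}$ if they disagree. Since $p_\fb = a \neq p_\fs$, one has $\rel_{\pS(M)}(\fb,\fs) \in \{2,3\}$; since $p_\ft = x = p_\fy$, one has $\rel_{\pS(N)}(\ft,\fy) \in \{0,1\}$. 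These are disjoint, so each summand is killed by the defining relations of $\QMat M N \pS$, giving $XY' = 0$ and hence $X = XY$.

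Applying the symmetric argument on the other side, I would use the row sum $\sum_\ft u_{\fb\ft} = 1$ together with $X' = \sum_{\ft:\,p_\ft \neq x} u_{\fb\ft}$ to expand $Y = (X + X')Y$ and show $X'Y = 0$ by the reversed case split: now $\rel_{\pS(M)}(\fb,\fs) \in \{0,1\}$ while $\rel_{\pS(N)}(\ft,\fy) \in \{2,3\}$. Combining, $X = XY = Y$.

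The only substantive step is the small case analysis pinning down that the partition $\{0,1\}$ vs.\ $\{2,3\}$ of relation values is controlled entirely by whether the distinguished points agree; once that is noted, the defining relations of $\QMat M N \pS$ finish the proof mechanically. I do not expect any genuine obstacle; the main thing to be careful about is not confusing which side (of $\QMat$) the relation $u_{\fc\fv}u_{\fd\fw} = 0$ is invoked on, since the relation is symmetric in $(\fc,\fd)$ vs.\ $(\fv,\fw)$ only up to swapping $M$ and $N$ in the indexing.
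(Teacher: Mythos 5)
Your proof is correct and is essentially the paper's own argument: both sides are shown to equal $\sum_{\ft : p_{\ft}=x}\sum_{\fs : p_{\fs}=a} u_{\fb\ft}u_{\fs\fy}$ by inserting the full column (resp.\ row) sum $=1$ and killing the cross terms via the $\{0,1\}$ versus $\{2,3\}$ dichotomy of $\rel$ according to whether the distinguished points agree. Your observation that the identity really lives in $\QMat M N \pS$ is well taken --- the paper states the lemma in $\fS(\psSets M,\psSets N)$, but the relations it invokes, namely $u_{\fb\ft}u_{\fc\fy}=0$ whenever $p_{\fb}\neq p_{\fc}$ and $p_{\ft}=p_{\fy}$ (and the symmetric ones), are exactly generators of $J_{\bullet}(M,N,\pS)$ and do not hold in the bare magic unitary algebra.
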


\begin{proof}
 Throughout, all equations take place in $\fS(\psSets M, \psSets N)$. Using 
\begin{equation*}
  \sum_{\fc\in \pssets{\pS}{M}} u_{\fc\fy}=1 
  \quad \text{and} \quad 
  u_{\fb\ft}u_{\fc\fy} = 0 \quad \text{whenever} \quad a\neq p_{\fc},
\end{equation*}
we have
\begin{align*}
\sum_{\ft: p_{\ft} = x} u_{\fb\ft}
= \sum_{\ft: p_{\ft} = x} u_{\fb\ft}
\left( 
\sum_{\fc \in \pssets{\pS}{M}} u_{\fc\fy}
\right)
=\sum_{\ft:p_{\ft}=x}\sum_{\fc:p_{\fc}=a} 
u_{\fb\ft}u_{\fc\fy}.
\end{align*}

\noindent Now, using the equation 
\begin{equation*}
    \sum_{\fz \in \pssets{\pS}{N}}u_{\fb\fz} = 1 
    \quad \text{and} \quad 
    u_{\fb\fz}u_{\fc\fy}=0 \quad \text{whenever} \quad  x\neq p_{\fz},
\end{equation*}
we infer
\begin{equation*}
    \sum_{\fs: p_{\fs} = a}u_{\fs\fy} 
    = \left(\sum_{\fz \in \pssets{\pS}{N}}  
    u_{\fb\fz} \right) \sum_{\fs: p_{\fs} = a}u_{\fs\fy} 
    = \sum_{\fz:p_{\fz} = x} \sum_{\fs: p_{\fs} = a} u_{\fb\fz} u_{\fs\fy} 
\end{equation*}
and the result follows. 
\end{proof}

\begin{proof}[Proof of \Cref{prop:sym-comparison}]
We first consider the independence on the $\fb_a$. By Lemma \ref{lem:comparison-independent}, the map $\varphi$ may be defined as 
\begin{equation}
\label{eq:comparison-N}
    \varphi(w_{ax}) = \sum_{\substack{\fs\in \psSets{N} \\ p_{\fs}=a}} u_{\fs\fy_{x}}
\end{equation}
where we fix $\fy_{x}\in \psSets{N}$ for each $x\in \ssets E N$. In particular, the original characterization of $\varphi$ is independent of the $\fb_a$ and the characterization is \Cref{eq:comparison-N} is independent of the $\fy_{x}$. 

Consider the idempotent relation
\begin{equation*}
    \varphi(w_{ax}^2) = 
    \sum_{\substack{\fs,\ft\in \psSets N \in \ssets \pS N \\ x=p_{\fs}=p_{\ft}}} 
    u_{\fb_a\fs} u_{\fb_a\ft} 
\end{equation*}
Since $u_{\fb_a\fs} u_{\fb_a\ft} = 0$ for  $\fs\neq \ft$ and $(u_{\fb_a\fs})^2 = u_{\fb_a\fs}$, the above sum is just $\varphi(w_{ax})$. By a similar argument, we get that $ \varphi(w_{ax}w_{ay}) = 0$ for $x\neq y$. If  $a\neq b$, then $u_{\fb_a\fs} u_{\fb_{b}\ft} = 0$ since 
\begin{equation*}
    \rel_{\pS(M)}(\fb_a,\fb_b) \neq \rel_{\pS(N)}(\fs,\ft) 
\end{equation*} 
when $p_{\fs} =p_{\ft} = x$.
So we have that $\varphi(w_{ax}w_{bx})= 0$. 
Next, we have
\begin{equation*}
    \varphi\left( \sum_{x\in E(N)} w_{ax} \right) = \sum_{x\in E(N)} 
    \sum_{\substack{\fs\in \psSets{N} \\ p_{\fs} = x}} u_{\fb_{a} \fs}  = \sum_{\fs\in \psSets{N}} u_{\fb_{a}\fs} = 1. 
\end{equation*}
In a similar way, using Equation \eqref{eq:comparison-N}, we also have 
\begin{equation*}
    \varphi\left( \sum_{a\in E(M)} w_{ax} \right) = 1. 
\end{equation*}
as required. 
\end{proof}

\begin{theorem}
\label{thm:gs+same+size}
    If $M$ and $N$ are $\pS$-quantum isomorphic, then $E(M)$ and $E(N)$ have the same size. 
\end{theorem}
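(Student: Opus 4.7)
The plan is to use the comparison homomorphism $\varphi$ constructed in Proposition \ref{prop:sym-comparison}, together with the fact from the discussion after \eqref{eq:fSEF} that the magic-unitary algebra $\fS(E,F)$ vanishes unless $|E| = |F|$.

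First I would unpack the hypothesis: if $M$ and $N$ are $\pS$-quantum isomorphic then by \Cref{thm:algebraic-quantum-iso} the algebra $\QMat{M}{N}{\pS}$ is nonzero, so in particular its parent algebra $\fS(\psSets{M},\psSets{N})$ is nonzero (the former being a quotient of the latter). Next I would invoke \Cref{prop:sym-comparison}, which provides a unital $\ast$-homomorphism
\begin{equation*}
    \varphi \colon \fS(\ssets{E}{M},\ssets{E}{N}) \longrightarrow \fS(\psSets{M},\psSets{N}).
\end{equation*}
Since $\varphi$ is unital and $1 \neq 0$ in $\fS(\psSets{M},\psSets{N})$, the unit in $\fS(\ssets{E}{M},\ssets{E}{N})$ cannot vanish, so the source algebra is itself nonzero. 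Finally, by the observation recorded after the definition \eqref{eq:fSEF} that nonzero magic unitaries must be square, the nonvanishing of $\fS(\ssets{E}{M},\ssets{E}{N})$ forces $|E(M)| = |E(N)|$.

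The argument is short and there is no real obstacle, since all the technical content has been packaged into \Cref{prop:sym-comparison} and the general fact that unequal-size magic unitary algebras collapse. The only point that deserves a brief verbal remark is that $\varphi$ is genuinely unital, and that the covering hypothesis on $\pS$ is what allows the defining formula $w_{ax} \mapsto \sum_{p_{\ft}=x} u_{\fb_a \ft}$ to make sense for every $a$ and $x$, so that the comparison map exists in the first place.
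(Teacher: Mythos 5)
Your argument is correct and takes essentially the same route as the paper's proof: both reduce the claim to Proposition~\ref{prop:sym-comparison} combined with the fact that a nonzero magic unitary must be square. Your phrasing via the nonvanishing of the source algebra $\fS(\ssets{E}{M},\ssets{E}{N})$ is just an equivalent restatement of the paper's observation that $[\varphi(w_{ax})]$ is a nonzero $|E(M)|\times|E(N)|$ magic unitary.
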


\begin{proof}
    By \Cref{prop:sym-comparison}, $[\varphi(w_{ax})]$ defines a nonzero $|E(M)| \times |E(N)|$ magic unitary, so $|E(M)| = |E(N)|$ because nonzero magic unitaries must be square.
\end{proof}

\subsection{A disjoint automorphism criterion}\label{sec:disj-aut}
Recall from \Cref{def:relation+colored+graph} that we may associate a colored graph $\Grel(M,\pS)$ to a matroid $M$ and isomorphism structure $\pS$. 
In this section, we show that $\QAutp M \pS$ may be naturally identified with the quantum automorphism group of $\Grel(M,\pS)$ in the sense of \cite{robersonschmidt}. From this, we get a disjoint automorphism criterion for $\QAutp M \pS$ which ensures that $\QAlg{M}{\pS}^{\com} \neq \QAlg M \pS$. In view of \Cref{prop:quantization}, those $\QAlg M \pS$ that are noncommutative are proper quantizations of $\Aut_{\bullet}(M,\pS)$.

A matroid isomorphism $\varphi \colon M \to N$ defines a colored graph isomorphism $\varphi_{\Gamma} \colon \Grel(M,\pS) \to \Grel(N,\pS)$ by $\varphi_{\Gamma}(\fa) = (\varphi(S_{\fa}), \varphi(p_{\fa}))$. The following is a reformulation of \Cref{cor:induced+iso}. 

\begin{proposition}
    If $\Theta \colon \Grel(M,\pS) \to \Grel(N,\pS)$ is a colored graph isomorphism, then there is an isomorphism $\varphi \colon M \to N$ such that the induced colored graph isomorphism $\varphi_{\Gamma}$ equals $\Theta$. 
\end{proposition}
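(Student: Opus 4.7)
The plan is to verify that any colored graph isomorphism $\Theta$ preserves the full relation function $\rel$ on $\psSets{M}\times\psSets{M}$, and then to invoke \Cref{cor:induced+iso} to conclude. Since the vertex set of $\Grel(M,\pS)$ is exactly $\psSets{M}$, a colored graph isomorphism is by definition a bijection $\Theta\colon\psSets{M}\to\psSets{N}$ which preserves both edges and their colors. Once $\Theta$ is shown to preserve $\rel_{\pS(M)}$ pointwise, \Cref{cor:induced+iso} produces an isomorphism $\varphi\colon M\to N$, and the formula for the induced $\varphi_{\Gamma}$ will agree with $\Theta$ by construction.

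The key step is to check preservation of $\rel$ in each of the four possible values. The values $1$ and $2$ are handled immediately: these are exactly the edge colors of $\Grel(M,\pS)$, so a colored graph isomorphism preserves them by definition. For the value $0$, note that $\rel_{\pS(M)}(\fa,\fb)=0$ forces $S_{\fa}=S_{\fb}$ and $p_{\fa}=p_{\fb}$, i.e., $\fa=\fb$, and then $\Theta(\fa)=\Theta(\fb)$ trivially gives $\rel_{\pS(N)}(\Theta(\fa),\Theta(\fb))=0$.

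The only case requiring a small argument is $\rel_{\pS(M)}(\fa,\fb)=3$. Here $\{\fa,\fb\}$ is a non-edge of $\Grel(M,\pS)$ with $\fa\neq\fb$. Since $\Theta$ is a graph isomorphism, $\{\Theta(\fa),\Theta(\fb)\}$ is a non-edge of $\Grel(N,\pS)$, so $\rel_{\pS(N)}(\Theta(\fa),\Theta(\fb))\in\{0,3\}$. Injectivity of $\Theta$ gives $\Theta(\fa)\neq\Theta(\fb)$, ruling out the value $0$, and leaving $\rel_{\pS(N)}(\Theta(\fa),\Theta(\fb))=3$.

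The main obstacle, which is really only a mild subtlety, is recognizing that $\Grel(M,\pS)$ only records relations $1$ and $2$ as edges, so one must rule out the confusion between the two kinds of non-edges (coming from relations $0$ and $3$); this is precisely where bijectivity of $\Theta$ on vertices, rather than just on edges, is used. After this verification, the conclusion is a direct appeal to \Cref{cor:induced+iso}, which yields the matroid isomorphism $\varphi$ with $\varphi_{\Gamma}=\Theta$.
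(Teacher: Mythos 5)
Your proof is correct and follows the same route as the paper, which simply declares the proposition to be a reformulation of \Cref{cor:induced+iso}; you have just made explicit the case-check (colors $1,2$ preserved as edge colors, $0$ by functionality, $3$ by injectivity plus preservation of non-edges) that the paper leaves implicit.
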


Let $G = G(M,\pS)$. We denote by $A_{\Grel^c}$ the adjacency matrix of the edge color $c$, that is, $(A_{G^c})_{\fa \fx}=1$ if $\rel_{\ssets \pS M}(\fa , \fx) = c$ and $(A_{G^c})_{\fa\fx}=0$ otherwise. This lets us relate the quantum automorphism group of $M$ to that of $G$.

\begin{proposition}
Let $M$ be a matroid, $\pS$ a matroid isomorphism structure that covers $M$, and $G = \Grel(M,\pS)$. Then we have the following presentation of $\QAlg M \pS$
\begin{align*}
    \QAlg M \pS = \fS(\ssets E M , \ssets E M) / \langle A_{G^c}U=UA_{G^c} \text{ for } c\in \{1,2\} \rangle
\end{align*}
\end{proposition}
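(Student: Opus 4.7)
The plan is to prove the equality of two ideals in $\fS(\psSets M, \psSets M)$ (note that the adjacency matrices $A_{G^c}$ are indexed by vertices of $G = \Grel(M,\pS)$, i.e., by $\psSets M$, so the ambient magic unitary algebra in the proposition should read $\fS(\psSets M, \psSets M)$ rather than $\fS(\ssets E M, \ssets E M)$): on one hand, the defining ideal $J(M,M,\pS)$ of $\QAlg M \pS = \QMat M M \pS$, generated by $u_{\fa\fx}u_{\fb\fy}$ with $\rel_{\pS(M)}(\fa,\fb) \neq \rel_{\pS(M)}(\fx,\fy)$; on the other, the ideal generated by the entries of $A_{G^c}U - UA_{G^c}$ for $c\in\{1,2\}$. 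This is a colored--graph analogue of the standard equivalence between the two usual presentations of the quantum automorphism group of a graph. Two observations simplify the case analysis. First, $A_{G^0}=I$ and $A_{G^0}+A_{G^1}+A_{G^2}+A_{G^3}=J$, the all--ones matrix; since $J$ commutes with every magic unitary (all row/column sums equal $1$), commutation with $A_{G^c}$ for $c\in\{1,2\}$ is equivalent to commutation with $A_{G^c}$ for all $c\in\{0,1,2,3\}$. Second, the generators of $J(M,M,\pS)$ with $\rel(\fa,\fb)=0$ (i.e., $\fa=\fb$) or $\rel(\fx,\fy)=0$ (i.e., $\fx=\fy$) are already enforced by the magic unitary axioms $u_{\fa\fx}u_{\fa\fy}=0$ and $u_{\fa\fx}u_{\fb\fx}=0$.

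For the forward direction I would fix $c\in\{1,2,3\}$ and start from the $(\fa,\fy)$--entry of $A_{G^c}U=UA_{G^c}$, namely $\sum_{\fb':\,\rel(\fa,\fb')=c}u_{\fb'\fy}=\sum_{\fx':\,\rel(\fx',\fy)=c}u_{\fa\fx'}$. Right--multiplying by $u_{\fb\fy}$ for a fixed $\fb$ with $\rel(\fa,\fb)=c$ collapses the left side to $u_{\fb\fy}$ via $u_{\fb'\fy}u_{\fb\fy}=\delta_{\fb'\fb}u_{\fb\fy}$, and then left--multiplying by $u_{\fa\fx}$ and applying $u_{\fa\fx}u_{\fa\fx'}=\delta_{\fx\fx'}u_{\fa\fx}$ forces $u_{\fa\fx}u_{\fb\fy}=0$ whenever $\rel(\fx,\fy)\neq c$. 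Ranging $c$ over $\{1,2,3\}$ handles every generator of $J(M,M,\pS)$ with $\rel(\fa,\fb),\rel(\fx,\fy)\in\{1,2,3\}$ unequal, while the remaining generators are covered by the magic unitary observation.

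For the converse I would establish $u_{\fa\fz}\bigl[(A_{G^c}U)_{\fa\fy}-(UA_{G^c})_{\fa\fy}\bigr]=0$ for every $\fa,\fy,\fz$; summing over $\fz$ against $\sum_\fz u_{\fa\fz}=1$ will then give the desired matrix identity. The $UA_{G^c}$ contribution $u_{\fa\fz}\sum_{\fx':\,\rel(\fx',\fy)=c}u_{\fa\fx'}$ collapses by magic unitary to $u_{\fa\fz}$ when $\rel(\fz,\fy)=c$ and to $0$ otherwise. The $A_{G^c}U$ contribution $u_{\fa\fz}\sum_{\fb:\,\rel(\fa,\fb)=c}u_{\fb\fy}$ uses the defining relations in $J(M,M,\pS)$ to kill every summand with $\rel(\fa,\fb)\neq \rel(\fz,\fy)$: when $\rel(\fz,\fy)\neq c$ the full sum is zero, and when $\rel(\fz,\fy)=c$ the index may be harmlessly extended to all $\fb$, giving $u_{\fa\fz}\sum_\fb u_{\fb\fy}=u_{\fa\fz}$. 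The two contributions match in both cases.

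The main obstacle, such as it is, is the careful case analysis on $\rel$--values, together with the observation that the $c=3$ commutation in the forward direction is a free consequence of the $c\in\{1,2\}$ commutations via the identity $A_{G^0}+A_{G^1}+A_{G^2}+A_{G^3}=J$. Once this bookkeeping is in place, every remaining step is a routine manipulation of the magic unitary axioms.
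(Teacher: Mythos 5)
Your proof is correct and is essentially the paper's argument written out in full: the paper simply cites \cite[Prop.~2.1.6]{Schmidt-thesis} for the equivalence between $A_{G^c}U=UA_{G^c}$ and the vanishing relations $u_{\fa\fx}u_{\fb\fy}=0$ for $\rel_{\pS(M)}(\fa,\fb)=c$, $\rel_{\pS(M)}(\fx,\fy)\neq c$, which is exactly what you establish by hand. You are also right that the statement should read $\fS(\psSets M,\psSets M)$ rather than $\fS(\ssets E M,\ssets E M)$, since the magic unitary and the adjacency matrices are indexed by the pointed sets.
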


\begin{proof}
This follows (for example) from \cite[Prop.~2.1.6]{Schmidt-thesis}, where it is shown that $A_{\Grel^c}U=UA_{\Grel^c}$ is equivalent to $u_{\fa\fx}u_{\fb\fy}=0$ for $\rel_{\ssets \pS M}(\fa , \fb)=c, \rel_{\ssets \pS M}(\fx , \fy)\neq c$.
\end{proof}

A pair of permutations $\pi, \tau \in \Sym(E)$ (where $E$ is a finite set) are \textit{disjoint} if $\pi(i)\neq i$ implies $\tau(i)=i$ and vice versa. 
In our case, we study the disjoint automorphism of the relation graph. The proof is the same as in \cite{Foldedcube}. 

\begin{theorem}
\label{thm:disjoint+aut+criterion}
  Let $M$ be a matroid and $\pS$ be an isomorphism structure that covers $M$. If $\Grel(M,\pS)$ has a pair of non-trivial, disjoint automorphism $\tilde{\pi}, \tilde{\tau}$, then the algebra $\QAlg{M}{\pS}$ is noncommutative.
  \label{lem:disjoint_aut}
\end{theorem}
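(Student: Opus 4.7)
The plan is to invoke the presentation $\QAlg{M}{\pS} = \fS(\psSets{M}, \psSets{M}) / \langle A_{G^c} U = U A_{G^c} \suchthat c \in \{1,2\} \rangle$ established in the preceding proposition (writing $G = \Grel(M,\pS)$), and construct an explicit unital $\ast$-homomorphism from $\QAlg{M}{\pS}$ into $M_2(\C)$ whose image is noncommutative, following the disjoint-automorphism construction of \cite{Foldedcube}.

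Fix two noncommuting projections $p, q \in M_2(\C)$, for example rank-one projections onto distinct non-orthogonal lines. Write $V = \psSets{M}$ and let $V_{\pi}, V_{\tau} \subset V$ denote the supports of $\tilde{\pi}$ and $\tilde{\tau}$, which by hypothesis are nonempty and disjoint. Define a matrix $W \in M_V(M_2(\C))$ by
\begin{equation*}
W = I_V \otimes 1 + (P_{\tilde{\pi}} - I_V) \otimes p + (P_{\tilde{\tau}} - I_V) \otimes q.
\end{equation*}
Unpacking, $W_{\fa\fa} = 1$ for $\fa \notin V_{\pi} \cup V_{\tau}$; $W_{\fa\fa} = 1-p$ and $W_{\fa, \tilde{\pi}(\fa)} = p$ for $\fa \in V_{\pi}$; $W_{\fa\fa} = 1-q$ and $W_{\fa, \tilde{\tau}(\fa)} = q$ for $\fa \in V_{\tau}$; all other entries vanish.

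I would then verify two claims. First, $W$ is a magic unitary: every entry lies in the projection set $\{0,1,p,1-p,q,1-q\}$; the row sums equal $1$ because $p+(1-p) = q+(1-q) = 1$; and the column sums equal $1$ using that $V_{\pi}$ is $\tilde{\pi}$-invariant, $V_{\tau}$ is $\tilde{\tau}$-invariant, and $V_{\pi} \cap V_{\tau} = \emptyset$, so no column collects contributions from both permutations. Second, $W$ commutes with each colored adjacency matrix, since
\begin{equation*}
[A_{G^c} \otimes 1, W] = [A_{G^c}, P_{\tilde{\pi}}] \otimes p + [A_{G^c}, P_{\tilde{\tau}}] \otimes q,
\end{equation*}
and both commutators vanish for $c \in \{1,2\}$ because $\tilde{\pi}$ and $\tilde{\tau}$ are automorphisms of the colored graph $G$.

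By the universality expressed in the preceding proposition, $u_{\fa \fb} \mapsto W_{\fa\fb}$ extends to a unital $\ast$-homomorphism $\varphi \colon \QAlg{M}{\pS} \to M_2(\C)$. Choosing any $\fa \in V_{\pi}$ and $\fa' \in V_{\tau}$, its image contains $p = \varphi(u_{\fa, \tilde{\pi}(\fa)})$ and $q = \varphi(u_{\fa', \tilde{\tau}(\fa')})$, which do not commute; hence the image is noncommutative and so is $\QAlg{M}{\pS}$. The one genuinely nontrivial step is the commutation verification in the third paragraph; disjointness of the supports is what decouples the $p$- and $q$-contributions, reducing the identity to the two independent facts $[A_{G^c}, P_{\tilde{\pi}}] = 0$ and $[A_{G^c}, P_{\tilde{\tau}}] = 0$.
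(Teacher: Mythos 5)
Your proposal is correct and is essentially the argument the paper silently invokes by deferring to \cite{Foldedcube}: one builds a magic unitary over $M_2(\C)$ from two noncommuting projections, supported along the two disjoint permutations, checks that it commutes with the colored adjacency matrices because $\tilde{\pi}$ and $\tilde{\tau}$ are colored-graph automorphisms, and concludes noncommutativity from the resulting representation whose image contains $p$ and $q$. The decomposition $W = I_V\otimes(1-p-q) + P_{\tilde{\pi}}\otimes p + P_{\tilde{\tau}}\otimes q$ you use is exactly the standard disjoint-automorphism construction, with disjointness guaranteeing that every entry is a projection and every row and column sums to $1$.
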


Using this theorem, we produce examples of matroids such that the underlying $\ast$-algebra of $\QAutp M \NB$ is noncommutative, but the (bases) quantum automorphism group from \cite{CoreyJoswigSchanzWackWeber} is commutative. The following proposition gives a straightforward application of \Cref{thm:disjoint+aut+criterion} for finding $\QAutp N \pS$ that exhibit quantum symmetries. 

\begin{proposition}
\label{prop:examples+noncommutative}
    Let $M$ be a matroid and $\pS$ a matroid isomorphism structure covering $M$. 
    Suppose there are distinct elements $a,b,c,d\in \ssets E M$ such that 
    \begin{itemize}
        \item there are $A, B\in \ssets \pS M$ with $a,b\in A$ and $c,d\in B$, and
        \item the elements $a,b,c,d$ are contained in no other elements of $\pS$.
    \end{itemize}
    Then the transpositions $a \leftrightarrow b$ and $c\leftrightarrow d$ are a pair of disjoint automorphisms of $\Grel(M,\pS)$.  In particular, $\QAlg M \pS$ is noncommutative. 
\end{proposition}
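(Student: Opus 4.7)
The plan is to interpret each transposition as a permutation of $\ssets{E}{M}$ first, lift it to an action on $\psSets{M}$, and then verify automorphism properties using the two hypotheses.

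Let $\pi \in \Sym(\ssets{E}{M})$ be the transposition swapping $a$ and $b$, and similarly $\tau$ for $c,d$. I would first check that $\pi$ induces a well-defined map $\psSets{M} \to \psSets{M}$ via $(S,p) \mapsto (\pi(S), \pi(p))$. For this, one must show $\pi(S) \in \ssets{\pS}{M}$ for every $S \in \ssets{\pS}{M}$. If $S$ contains neither $a$ nor $b$, then $\pi(S)=S$. Otherwise, by the second hypothesis the only element of $\ssets{\pS}{M}$ containing $a$ or $b$ is $A$, so $S = A$; in that case $\pi$ merely permutes the two elements $a,b$ inside $A$, so $\pi(A)=A$ as a set. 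In either case $\pi(S)\in \ssets{\pS}{M}$. The same argument with $c,d$ in place of $a,b$ and $B$ in place of $A$ handles $\tau$.

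Next I would verify that $\pi$ (and likewise $\tau$) preserves the edge-colored structure of $\Grel(M,\pS)$. Since the relation $\rel_{\ssets{\pS}{M}}$ depends only on the two equalities $S_{\fa} = S_{\fb}$ and $p_{\fa} = p_{\fb}$, and since $\pi$ acts as a bijection on both $\ssets{\pS}{M}$ and $\ssets{E}{M}$, we have $\pi(S_{\fa})=\pi(S_{\fb}) \iff S_{\fa} = S_{\fb}$ and similarly for base points. Hence
\begin{equation*}
\rel_{\ssets{\pS}{M}}(\pi\cdot \fa, \pi\cdot \fb) = \rel_{\ssets{\pS}{M}}(\fa, \fb),
\end{equation*}
so $\pi$ is an automorphism of the colored graph $\Grel(M,\pS)$, and likewise for $\tau$.

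For non-triviality and disjointness, observe that $\pi$ fixes $(S,p)$ unless $\pi(p)\neq p$, i.e., unless $p\in\{a,b\}$; the pointed set $(A,a)$ lies in $\psSets{M}$ by the first hypothesis and is moved by $\pi$, so $\pi \neq \mathrm{id}$. Similarly $\tau \neq \mathrm{id}$ moves $(B,c)$, and $\tau$ acts non-trivially only on $(S,p)$ with $p \in \{c,d\}$. Since $\{a,b\}\cap\{c,d\}=\emptyset$, the supports of $\pi$ and $\tau$ in $\psSets{M}$ are disjoint. The conclusion that $\QAlg{M}{\pS}$ is noncommutative then follows directly from \Cref{thm:disjoint+aut+criterion}. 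The only subtlety — and the one spot where both hypotheses are used simultaneously — is step 1: without the uniqueness of $A$ and $B$ as the elements of $\ssets{\pS}{M}$ containing $\{a,b\}$ and $\{c,d\}$, $\pi$ and $\tau$ need not descend to permutations of $\ssets{\pS}{M}$, and the entire argument collapses.
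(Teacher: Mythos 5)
Your proof is correct and follows exactly the route the paper intends: the paper states this proposition as a ``straightforward application'' of \Cref{thm:disjoint+aut+criterion} and omits the details, and your argument supplies precisely those details (well-definedness of the induced action on $\psSets M$ via the uniqueness hypothesis, preservation of the relation coloring, non-triviality, and disjointness of supports). Nothing to add.
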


Theorem~5.3 in \cite{CoreyJoswigSchanzWackWeber} asserts that $\QAut{\pB}{M}$ (which is the same as $\QAut{\NB}{M}$) is commutative when the girth of $M$ is at least 4 (the \textit{girth} of a matroid is the size of its smallest circuit). Using the disjoint automorphism criterion, we may find a matroid $M$ of a given girth such that the underlying $\ast$-algebra of $\QAutp M \pNB$ is noncommutative.  

\begin{example}
    Fix $r\geq 3$ and let $M$ be the rank $r$ matroid on the set $E = \{1,2,\ldots, r+2\}$ whose nonbases are
    \begin{equation*}
        \NB(M) = \left\{ E\setminus \{1,2\}, E\setminus \{3,4\}  \right\}.
    \end{equation*}
    By \Cref{prop:examples+noncommutative} we see that the underlying $\ast$-algebra of $\QAutp M \NB$ is noncommutative. In contrast, when $r\geq 4$, we have that $\QAut \pB M$ is commutative since the girth of $M$ is $r$.
    We also have that $\QAut \pB M$ is commutative when $r=3$, as we now demonstrate.  

    When $r=3$, the nonbases of $M$ are $\{1,2,5\}$ and $\{3,4,5\}$ (this is the cycle matroid of a graph obtained by removing a single edge from the complete graph on $4$ vertices). It suffices to show that
     $w_{ax}w_{by} = w_{by}w_{ax}$,
    where $a\neq b$ and $x\neq y$. 
    First, suppose that $a\in \{1,2\}$ and $b \in \{3,4\}$, or vice versa. To simplify the notation, consider the case where $a=1$ and $b=3$ as the other cases are similar. Then, in $\QAut{\NB}{M}$, we have
    \begin{align*}
        w_{1x}w_{3y}w_{1x} =
        w_{1x}w_{3y}(1 - w_{2x} - w_{3x} - w_{4x} - w_{5x}) =
        w_{1x}w_{3y}, 
    \end{align*}
where $w_{1x}w_{3y}w_{3x}=0$ since $x\neq y$ and $w_{1x}w_{3y}w_{jx}=0$ for $j\in \{2,4,5\}$ since $(1,3,j)$ is a basis tuple in those cases.
As $w_{1x}w_{3y}w_{1x}$ is self-adjoint, we have that $w_{1x}w_{3y} = w_{3y}w_{1x}$. 

Now we may suppose that $a,b \in \{1,2,5\}$ or $a,b\in \{3,4,5\}$. Without loss of generality, say $a=1$ and $b=2$. Then  
\begin{align*}
    w_{1x}w_{2y}w_{1x} =
    w_{1x}w_{2y}(1 - w_{2x} - w_{3x} - w_{4x} - w_{5x}) =
    w_{1x}w_{2y} - w_{1x}w_{2y}w_{5x}
\end{align*}
and
\begin{align*}
    w_{1x}w_{2y}w_{5x} = w_{1x}(1 - w_{1y} - w_{3y} - w_{4y} - w_{5y})w_{5x} = 0,
\end{align*}
since once again $w_{1x}w_{ky}w_{jx}=0$ either because $x\neq y$ or $(1,k,j)$ is a basis tuple for appropriate $k,j$. 
So $w_{1x}w_{2y} = w_{1x}w_{2y}w_{1x}$, and the latter is evidently self-adjoint. Therefore $w_{1x}w_{2y} = w_{2y}w_{1x}$. This completes the proof that $\QAut{\NB}{M}$ is commutative.   
\end{example}

\section*{Acknowledgements}
We thank Michael Joswig and the DM/G group at TU Berlin for hosting S. Schmidt and we thank the Department of Mathematical Sciences at the University of Nevada, Las Vegas for hosting M. Wack during various stages in this project. 
We thank Fabian Lenzen for helpful conversations. 
S. Schmidt acknowledges support by the Deutsche Forschungsgemeinschaft (DFG, German Research Foundation) under Germany’s Excellence Strategy - EXC 2092 CASA - 39078197.

\bibliographystyle{abbrv}
\bibliography{./src/bibliographie.bib}

\begin{thebibliography}{10}

\bibitem{AtseriasEtAl}
A.~Atserias, L.~Mančinska, D.~E. Roberson, R.~Šámal, S.~Severini, and A.~Varvitsiotis.
\newblock Quantum and non-signalling graph isomorphisms.
\newblock {\em Journal of Combinatorial Theory, Series B}, 136:289--328, 2019.

\bibitem{BonindeMier}
J.~E. Bonin and A.~de~Mier.
\newblock The lattice of cyclic flats of a matroid.
\newblock {\em Ann. Comb.}, 12(2):155--170, 2008.

\bibitem{BrannanChirvasituEiflerHarrisPaulsenSuWasilewski}
M.~Brannan, A.~Chirvasitu, K.~Eifler, S.~Harris, V.~Paulsen, X.~Su, and M.~Wasilewski.
\newblock Bigalois extensions and the graph isomorphism game.
\newblock {\em Comm. Math. Phys.}, 375(3):1777--1809, 2020.

\bibitem{CleveHoyerTonerWatrous}
R.~Cleve, P.~Hoyer, B.~Toner, and J.~Watrous.
\newblock Consequences and limits of nonlocal strategies.
\newblock In {\em Proceedings. 19th IEEE Annual Conference on Computational Complexity, 2004.}, pages 236--249, 2004.

\bibitem{CleveLiuSlofstra}
R.~Cleve, L.~Liu, and W.~Slofstra.
\newblock Perfect commuting-operator strategies for linear system games.
\newblock {\em Journal of Mathematical Physics}, 58(1):012202, 01 2017.

\bibitem{CleveMittal}
R.~Cleve and R.~Mittal.
\newblock Characterization of binary constraint system games.
\newblock In J.~Esparza, P.~Fraigniaud, T.~Husfeldt, and E.~Koutsoupias, editors, {\em Automata, Languages, and Programming}, pages 320--331, Berlin, Heidelberg, 2014. Springer Berlin Heidelberg.

\bibitem{CoreyJoswigSchanzWackWeber}
D.~Corey, M.~Joswig, J.~Schanz, M.~Wack, and M.~Weber.
\newblock Quantum automorphisms of matroids.
\newblock {\em J. Algebra}, 667:480--507, 2025.

\bibitem{OSCAR-book}
C.~Eder, W.~Decker, C.~Fieker, M.~Horn, and M.~Joswig, editors.
\newblock {\em The {\Oscar} book}.
\newblock Springer, 2024.

\bibitem{Freslon}
A.~Freslon.
\newblock {\em Compact matrix quantum groups and their combinatorics}, volume 106 of {\em London Mathematical Society Student Texts}.
\newblock Cambridge University Press, Cambridge, 2023.

\bibitem{Mermin}
N.~D. Mermin.
\newblock Simple unified form for the major no-hidden-variables theorems.
\newblock {\em Phys. Rev. Lett.}, 65:3373--3376, Dec 1990.

\bibitem{Oxley}
J.~Oxley.
\newblock {\em Matroid theory}, volume~21 of {\em Oxford Graduate Texts in Mathematics}.
\newblock Oxford University Press, Oxford, second edition, 2011.

\bibitem{PaulsenRahaman}
V.~I. Paulsen and M.~Rahaman.
\newblock Bisynchronous games and factorizable maps.
\newblock {\em Ann. Henri Poincar\'e}, 22(2):593--614, 2021.

\bibitem{PaulsenSeveriniStahlkeTodorovWinter}
V.~I. Paulsen, S.~Severini, D.~Stahlke, I.~G. Todorov, and A.~Winter.
\newblock Estimating quantum chromatic numbers.
\newblock {\em J. Funct. Anal.}, 270(6):2188--2222, 2016.

\bibitem{robersonschmidt}
D.~E. Roberson and S.~Schmidt.
\newblock Solution group representations as quantum symmetries of graphs.
\newblock {\em Journal of the London Mathematical Society}, 106(4):3379--3410, 2022.

\bibitem{Schmidt-thesis}
S.~Schmidt.
\newblock {\em Quantum automorphism groups of finite graphs}.
\newblock PhD thesis, Universität des Saarlandes, 2020.

\bibitem{Foldedcube}
S.~Schmidt.
\newblock Quantum automorphisms of folded cube graphs.
\newblock {\em Ann. Inst. Fourier (Grenoble)}, 70(3):949--970, 2020.

\bibitem{Oscar}
{The \Oscar Team}.
\newblock {\Oscar} -- {O}pen {S}ource {C}omputer {A}lgebra {R}esearch system, version 1.4.1, 2023.

\bibitem{Wang:1999}
S.~Wang.
\newblock Ergodic actions of universal quantum groups on operator algebras.
\newblock {\em Comm. Math. Phys.}, 203(2):481--498, 1999.

\bibitem{Weber}
M.~Weber.
\newblock Quantum permutation matrices.
\newblock {\em Complex Anal. Oper. Theory}, 17(3):Paper No. 37, 26, 2023.

\bibitem{Whitney:1935}
H.~Whitney.
\newblock On the abstract properties of linear dependence.
\newblock {\em Amer. J. Math.}, 57(3):509--533, 1935.

\end{thebibliography}
\label{sec:biblio}

\end{document}